\def\hB{\hspace*{\fill}$\qed$}
\title{Coarse cone quotients}
\author{
Ulrich Bunke\thanks{Fakult{\"a}t f{\"u}r Mathematik,
Universit{\"a}t Regensburg,
93040 Regensburg,
ulrich.bunke@mathematik.uni-regensburg.de} 
}
\numberwithin{equation}{section}
\newtheorem{theorem}{Theorem}[section] 
\newtheorem{prop}[theorem]{Proposition}
\newtheorem{lem}[theorem]{Lemma}
\newtheorem{ddd}[theorem]{Definition}
\newtheorem{kor}[theorem]{Corollary}
\newtheorem{ass}[theorem]{Assumption}
\newtheorem{construction}[theorem]{Construction}
\theoremstyle{remark}
\theoremstyle{definition}
\newtheorem{ex}[theorem]{Example}
\newtheorem{rem}[theorem]{Remark}
\newcommand{\cass}{\mathrm{cass}}
\newcommand{\grp}{\mathrm{grp}}
\newcommand{\bgeom}{\mathrm{bgeom}}
\newcommand{\can}{\mathrm{can}}
\newcommand{\UBC}{\mathbf{UBC}}
\newcommand{\Yo}{\mathrm{Yo}}
\newcommand{\ho}{\mathrm{ho}}
\newcommand{\BC}{\mathbf{BC}}
\newcommand{\Fin}{\mathbf{Fin}}
\newcommand{\Cofib}{\mathrm{Cofib}}
\newcommand{\incl}{\mathrm{incl}}
\newcommand{\uli}[1]{\textcolor{red}{#1}}
\newcommand{\cZ}{{\mathcal{Z}}}
\newcommand{\cW}{{\mathcal{W}}}
\newcommand{\bA}{{\mathbf{A}}}
\newcommand{\cO}{{\mathcal{O}}}
\newcommand{\cY}{{\mathcal{Y}}}
\newcommand{\bP}{\mathbf{P}}
\newcommand{\strg}{\mathrm{strg}}
\newcommand{\cone}{\mathrm{cone}}
\newcommand{\cp}{\mathrm{cp}}
\renewcommand{\tr}{\mathrm{tr}}
\renewcommand{\id}{\mathrm{id}}
\newcommand{\CMtr}{\mathbf{CM}_{\mathrm{tr}}}
\newcommand{\CM}{\mathbf{CM} }
\newcommand{\CMctr}{\mathbf{CM}_{\mathrm{tr}}}
\newcommand{\BCtr}{\mathbf{BC}_{\mathrm{tr}}}
\newcommand{\str}{\mathrm{str}}
\newcommand{\disc}{\mathrm{disc}}
\newcommand{\Yoctr}{\mathrm{\Yo}_{\mathrm{tr}}}
\renewcommand{\Sq}{\mathrm{Sq}}
\newcommand{\geom}{\mathrm{geom}}
\begin{document}
 \setcounter{tocdepth}{1}

\maketitle

\begin{abstract}  We study the coarse motive of the quotient $\cO^{\infty}(X)//G$ of the cone of a uniform bornological coarse space $X$ with $G$-action. If $X$ admits a sufficiently ergodic probability measure, then we show  that the coarse assembly map for  $\cO^{\infty}(X)//G$ is not an equivalence. 
The main ideas are taken from a recent paper by C. Kitsios, T. Schick, and F. Vigolo \cite{Kitsios:2025aa} and are  adapted to the formalism of coarse homotopy theory based on bornological coarse spaces developed by A. Engel and the author \cite{buen}, \cite{ass}.

  \end{abstract}

 \tableofcontents

\section{Introduction}

Coarse geometry was introduced by J. Roe \cite{roe_index_coarse}, \cite{MR1147350}, \cite{roe_lectures_coarse_geometry} to study the large-scale  geometry of metric spaces, groups, and more general objects in a way which neglects  precise  numerical scales for the distances. Compatible bornologies as an additional structure were   proposed in \cite{buen}       in order  to fix notions of local finiteness.  Consequently, one considers 
 the category of bornological coarse spaces $\BC$ and proper controlled maps    as the general framework for developing homotopy-theoretic methods in coarse geometry. The natural framework to  study the small-scale properties of metric spaces globally without   fixing precise numerical scales for distances is built from uniform spaces. Compatible bornologies and  coarse structures are again added to capture local finiteness and some large-scale flavors. We propose to develop homotopy theory within  the category $\UBC$ of uniform bornological coarse spaces and uniform, proper, and controlled maps \cite{buen}, \cite{ass}.  
 
 These two categories are related by the forgetful functor 
\begin{equation}\label{fwerfvsdv}c:\UBC\to \BC
\end{equation}
which forgets the uniform structure.
More interestingly, we have the
geometric cone-at-infinity functor $$\cO^{\infty}:\UBC\to \BC$$
described in \cref{lpozhrtrger}. It
 can be used as a tool to translate local geometry into large-scale geometry.
This feature is best expressed  in terms of homology theories.

In \cite{buen}, coarse homology theories  were axiomatized as functors from  $\BC$ to cocomplete stable $\infty$-categories  that
satisfy coarse invariance, excision, vanishing on flasques, and $u$-continuity.  If a   coarse homology theory   annihilates  weakly flasque bornological coarse spaces,  it is called strong. 

 In order to state   assertions which hold for all coarse homology theories at once 
  we will work with the
 universal coarse homology theory and the universal strong coarse homology theory
\begin{equation}\label{gjiwejrgoewrijogwerf}\Yo:\BC\to \CM\ , \qquad \Yo^{\strg}:\BC\to \CM^{\strg}\ ,
\end{equation}
whose targets are called the categories of coarse motives and strong coarse motives \footnote{In ordinary topology, the universal homology theory is the suspension spectrum functor $\Sigma_{+}^{\infty}:\Top\to \Sp$. In   \cite{buen} we denoted coarse motives by   $\Sp\cX$ and called them coarse spectra in order to highlight the analogy with topology.}. These exist for formal reasons.

Similarly, local\footnote{This should remind of {\em locally  finite}. We use the word {\em local} since we replaced the local finiteness condition by the weaker condition of vanishing on flasques. The latter condition does not involve limits. It is therefore  better suited for motivic considerations.} homology theories are functors  from $\UBC$  to cocomplete stable $\infty$-categories
that are homotopy invariant, excisive,  $u$-continuous, and vanish on flasques \cite{ass}.
We again consider the  universal local homology theory 
\begin{equation}\label{werfeferfsfvfd}\Yo\cB:\UBC\to \Sp\cB\ .
\end{equation}
The role of the cone-at-infinity functor $\cO^{\infty}$ can now be clarified by the observation  \cite[Lem. 9.6]{ass} that the functor
\begin{equation}\label{sfdpokopsdvdsfvsdfvsdf}\cO^{\infty,\strg}:\UBC\xrightarrow{\cO^{\infty}} \BC\xrightarrow{\Yo^{\strg}} \CM^{\strg}
\end{equation}is a local homology theory. 

\begin{rem}
It is important to use the strong version $\Yo^{\strg}$ in \eqref{sfdpokopsdvdsfvsdfvsdf}.
For $X$ in $\UBC$ the shift  
$(t,x)\to (t+1,x)$ on $[0,\infty)\otimes X$ induces, by functoriality of $\cO^{\infty}$, an endomorphism $f$  
of $\cO^{\infty}([0,\infty)\otimes X)$ in $\BC$ that  is shifting and non-expanding. We have $\Yo(f)\simeq \id_{\cO^{\infty}([0,\infty)\otimes X)}$, but $f$    is not
close to the identity. Thus, $f$ only witnesses the weak flasqueness of  
$\cO^{\infty}([0,\infty)\otimes X)$. We can conclude that    $ \cO^{\infty,\strg}([0,\infty)\otimes X)\simeq 0$ as required by the axioms for a local homology theory.
 On the other hand we do not expect that   
  $\Yo( \cO^{\infty}([0,\infty)\otimes X))\simeq 0$ in general.  
  \hB \end{rem}


Let $G$ be a group. Colimits over $BG$ in $\UBC$ may fail to exist or can be  badly behaved.
 For example, the translation action of $\Z$ on $\R$  (with the metric  $\UBC$-structure)    admits no  quotient  $\colim_{B\Z}\R$  because the $\Z$-orbits are   unbounded. To see this, assume for a  contradiction that a quotient  $\colim_{B\Z}\R$ exists. 
For any $X$ in $\UBC$, a morphism
$\colim_{B\Z}\R\to X$ would then correspond to 
a $\Z$-equivariant morphism $f:\R\to X$, where $X$ carries the trivial $\Z$-action. Every $\Z$-orbit in $\R$ is sent to a single point. Since $f$ is proper, we would conclude that every $\Z$-orbit in $\R$  is bounded, which is absurd. Thus,
$\Hom_{\UBC}(\colim_{B\Z}\R,X)\cong \emptyset$  for every $X$ in $\UBC$. Setting
  $X=\colim_{B\Z}\R$, we see that this is impossible.

If we consider  $S^{1}$ (again with the metric $\UBC$-structure) and an irrational rotation action of $\Z$, then
$\colim_{B\Z}S^{1}$ is the quotient set $S^{1}/\Z$ with the uninteresting    maximal coarse and bornological
 structures and the minimal uniform structure. 

In \cref{kokptoerhertgretgrtgrtg}, we describe a functor
  $$-//G:\Fun(BG,\BC)\to \BC$$  and show that it
 represents the homotopy orbits in the homotopy theory on $\BC$ generated by the coarse equivalences; see \cref{kopgwergwerfw} for a precise formulation.
It is then natural to ask whether the  composition 
$$\cO^{\infty}(-)//G:\Fun(BG,\UBC)\to \BC$$ is a good replacement of the problematic
$\cO^{\infty}( \colim_{BG}-)$.
 
 \begin{rem} 
 Recall that, for $X$ in $\UBC$, the cone-at-infinity $\cO^{\infty}(X)$ is the set $\Z\times X$ equipped with a  bornological coarse structure derived from the uniform bornological coarse structure of $X$  (see 
 \cref{lpozhrtrger}), and that $\cO^{\infty}_{\ge 0}(X)$ is the subset $\nat\times X$ endowed with the induced bornological coarse structure. 
 
 The subspace $\cO^{\infty}_{\ge 0}(X)//G\subset\cO^{\infty}(X)//G$ is a version (see \cref{hezkohprherthgeg} for the relation between the universal cones and cones with a fixed scale as usually considered in the literature) of the warped cone introduced for metric spaces with $G$-action by J. Roe. The coarse geometry of these warped cones has been 
studied in a variety of papers, see, e.g., \cite{dsa}, \cite{Vigolo_2018},   \cite{de_Laat_2018}, \cite{Fisher_2019},   \cite{Li_2023}, \cite{Kitsios:2025aa}.  
 In the metric  context,  the Lipschitz geometry of the warped cones  
  is a very fine invariant, as for instance demonstrated in \cite{Fisher_2019}, \cite{Sawicki_2020}.  We think that a similar theory could be developed also for $\cO_{\ge 0}^{\infty}(X)//G$ as an object in $\BC$. But in this note,   we focus on  properties 
  that  can be  detected by coarse homology theories.
We furthermore prefer to work with $\cO^{\infty}(X)//G$ instead of 
$\cO^{\infty}_{\ge 0}(X)//G$ since the former  is a better invariant of the  local geometry of the $G$-object  $X$ alone. In contrast to $\Yo(\cO^{\infty}_{\ge 0}(X)//G)$ the motive $\Yo(\cO^{\infty}(X)//G)$  is,   e.g.,  invariant under enlarging the coarse structure of $X$.  
\hB\end{rem}

In this paper, we use the Rips complex functor $\bP$, the cone-at-infinity functor, and the cone boundary to define the 
motivic coarse assembly map
\begin{equation}\label{gergwerfrefwfw}\mu: \Sigma^{-1} \cO^{\infty,\str} \bP\to \cp:\CM\to \CM^{\strg}
\end{equation}
as proposed in \cite{ass}, see \cref{okopgtrgegrtg}. 
Let $$\CM_{\disc}\subseteq \CM_{\cass}\subseteq \CM$$ be the localizing subcategories  generated by the motives of discrete bornological coarse spaces, or consisting of the  motives for which the motivic coarse assembly map $\mu$  is an equivalence; see  \cref{kohperhretgegtre}. Interpreting 
 \cite{Kitsios:2025aa} in the context described above, we provide examples of $X$ in $\Fun(BG,\UBC)$ such that
 $\Yo(\cO^{\infty}(X)//G)$ does not belong to $ \CM_{\cass}$.

 Let $X$ be in $\Fun(BG,\UBC)$.
 \begin{theorem}\label{kopherthretge9}
 Assume:
 \begin{enumerate}
  
  \item\label{lptgeertgetrg} $X$ is bornologically bounded and has the maximal coarse structure.   
  \item \label{hpkprhtrgrtegerg1} Every uniform entourage $V$ of $X$ admits a finite $V$-dense subset.
\item \label{hpkprhtrgrtegerg} $X$ admits a uniform scale (see \cref{okbepgrbkeprbegfdb}).
 \item \label{rkhoeprtgertgretgegtr} Every uniform scale of $X$ is dominated by a Lipschitz scale (for the $G$-action)
 of finite Assouad-Nagata dimension   (see  \cref{kohpertgetrgeg} and \cref{asna}).  
 \item \label{ojoeprtherthe} The $G$-action on $X$ is uniformly free (see \cref{jkrtpzrtzhjrthrtzhrth}).
 \item \label{fhqwekfqweq} $G$ is finitely generated and has finite asymptotic dimension.

 \item \label{kpbgbrgebgrbgrb}$X$ admits an invariant non-atomic Borel probability measure $\nu$ with $\supp(\nu)=X$ such that $G$ acts ergodically on $(X,\nu)$ and  
 the unitary action on  $L^{2}(X,\nu)$ has a spectral gap (see \cref{gkwopergwerferfw}).
    \end{enumerate}
Then $\Yo(\cO^{\infty}(X)//G)$ does not belong to $ \CM_{\cass}$. 
   \end{theorem}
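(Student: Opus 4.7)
To show $\Yo(\cO^{\infty}(X)//G)\notin \CM_{\cass}$ it suffices to evaluate one coarse homology theory $E$ on $\cO^{\infty}(X)//G$ and exhibit a class that cannot lie in the image of the assembly map $\mu$ evaluated on this object. The plan is to take $E$ to be a $K$-theoretic coarse homology theory such as $\KHL$, and to transplant the ghost-projection construction of \cite{Kitsios:2025aa} into the motivic framework of \cite{ass}.

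Using the spectral gap from assumption~\ref{kpbgbrgebgrbgrb}, ergodicity, the non-atomic probability measure $\nu$, and the existence of finite $V$-dense subsets (assumption~\ref{hpkprhtrgrtegerg1}), one builds, for a cofinal family of uniform entourages $V_{n}$ of $X$, finite $V_{n}$-dense subsets $F_{n}\subseteq X$ together with group-averaging operators $P_{n}$ on $\ell^{2}(F_{n})$. The spectral gap forces $P_{n}$ to approximate projection onto constants with exponentially small error, and the propagation of $P_{n}$ is bounded by the Lipschitz scale of finite Assouad-Nagata dimension supplied by assumptions~\ref{hpkprhtrgrtegerg} and~\ref{rkhoeprtgertgretgegtr}. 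Threading the scales $V_{n}$ along the cone parameter $t\in[0,\infty)$, the family $(P_{n})$ assembles into a single operator $p$ in the Roe-type algebra representing $\KHL(\cO^{\infty}(X)//G)$. Uniform freeness (assumption~\ref{ojoeprtherthe}) together with bounded geometry from assumptions~\ref{lptgeertgetrg} and~\ref{hpkprhtrgrtegerg1} guarantees that the quotient by $G$ does not collapse the combinatorial structure supporting $p$, so $p$ descends to an honest projection of controlled propagation and defines a $K$-theory class $[p]$.

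The class $[p]$ is non-trivial: the invariant non-atomic measure $\nu$ with full support yields a tracial state $\tau$ on the Roe-type algebra of $\cO^{\infty}(X)//G$, and a direct volume computation at each cone level gives $\tau(p)>0$. On the other hand, $[p]$ cannot lie in the image of $\mu$ on $\cO^{\infty}(X)//G$: classes in the image of $\mu$ are, by construction of $\mu$ via $\Sigma^{-1}\cO^{\infty,\str}\bP$, represented by operators pulled back from the Rips complex of a discretization. Using finite asymptotic dimension of $G$ (assumption~\ref{fhqwekfqweq}) together with finite Assouad-Nagata dimension of the Lipschitz scales, the image of $\mu$ is detected on motives of discrete bornological coarse spaces, and on such motives the ghost trace $\tau$ vanishes identically. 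This forces $[p]$ to lie outside the image of $\mu$, whence $\Yo(\cO^{\infty}(X)//G)\notin\CM_{\cass}$.

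The main obstacle will be the middle step, namely the honest assembly of the local spectral-gap averaging operators into a projection on the cone quotient. One must simultaneously control propagation, ensure that the $G$-quotient preserves the combinatorial skeleton carrying $p$, and verify that the resulting trace is well-defined and positive. This is precisely where the assumptions on uniform freeness, bounded uniform geometry, and Lipschitz scales of finite Assouad-Nagata dimension enter in an essential way, and it is the place where the metric warped-cone arguments of \cite{Kitsios:2025aa} must be recast in the universal bornological coarse framework.
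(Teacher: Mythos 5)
Your overall instinct---test with a $K$-theoretic coarse homology theory and use a spectral-gap ghost projection---matches the paper, but the two central steps of your outline are precisely the ones that fail. First, assembling the averaging operators directly into a projection in the Roe algebra of $\cO^{\infty}(X)//G$ and certifying its nontriviality by a positive trace does not work: the cone quotient is not bornologically bounded, so its Roe algebra carries no tracial state, and in fact the direct image of the $\Z$-invariant Drutu--Nowak class under $\Sq(X)//G\to\cO^{\infty}(X)//G$ is \emph{zero} (this is \cref{okheprthertgetrge}, the motivic form of \cite[Thm.~B]{Kitsios:2025aa}). The paper therefore keeps the projection on the squeezing space: $\hat P$ lives on $\Sq(X)//G$, whose levels are coarsely disjoint and bornologically bounded so that a levelwise trace valued in $\prod_{\Z}\Z/\bigoplus_{\Z}\Z$ exists, and the obstruction class is the degree-one class $u\in\pi_{1}K\cX(\cO^{\infty}(X)//G)$ with $\delta(u)=(\pm 1)[p]$ produced from the fibre sequence \eqref{rewgerfewrfwe} and the $(\pm 1)$-twist; without this degree shift there is no usable class on the cone at all.

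Second, your reason why the class avoids the image of $\mu$ (``the image of $\mu$ is detected on discrete motives, where the ghost trace vanishes'') is not an argument: the levelwise trace of the ghost projection is $1$ in every level, so it certainly does not vanish, and discreteness of $\cO^{\infty,\strg}\bP$ on bounded-geometry motives says nothing by itself about a specific class in the target. The actual mechanism is the $L^{2}$-index theorem for sequence spaces (\cref{retgtrtbvgfdbgfd}): if $u$ were assembled, then by compatibility of $\mu$ with Mayer--Vietoris boundaries $(\pm 1)[p]$ would lie in the image of the assembly map for the pair $(\Sq(X)//G,\Sq_{-}(X)//G)$, and the index theorem would force its levelwise trace to equal the $G$-trace of its transfer along the branched coarse $G$-covering $(G\ltimes\Sq(X)_{V})//G\to\Sq(X)_{V}//G$ of \cref{uoiwgrgwegwerfwfwerf}; but the transfer of a ghost projection vanishes (\cref{hpopethrtgertgtg} together with \cite[Cor.~8.12]{Bunke:2025aa}), while the trace is $[(\pm 1)_{n}]\neq 0$, a contradiction. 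This is also where your hypotheses actually enter: finite Assouad--Nagata dimension of the Lipschitz scales and finite asymptotic dimension of $G$ are used to establish bounded geometry and finite asymptotic dimension of $\Sq(X)_{V}//G$ and of the covering space (\cref{koptwehrtherthrteeg}, \cref{pltherthtergegrtg}, \cref{okgowpgrefwref}), without which neither the transfer nor the index theorem is available; they are not used to ``detect the image of $\mu$ on discrete motives.''
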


   Note that the Assumptions \ref{kopherthretge9}.\ref{lptgeertgetrg}-\ref{hpkprhtrgrtegerg} are satisfied if $X$  is represented by a finite union of  compact  path-metric   spaces. In this case, the
    Assumptions \ref{kopherthretge9}.\ref{rkhoeprtgertgretgegtr}-\ref{ojoeprtherthe} hold if, in addition, $G$ acts  on $X$ freely by Lipschitz maps, and $X$ has finite Assouad-Nagata dimension.

 Assumption  \ref{kopherthretge9}.\ref{kpbgbrgebgrbgrb} is used to show that the Drutu-Nowak projection 
$\hat P$ from \cref{fewfewrdefwerfwef} belongs to the Roe algebra.  It
could be weakened to the assumption that the action of $G$ on $(X,\nu)$ is strongly ergodic; see \cref{erwg9upwerg}.

  \begin{ex}
  Let $G$ be a non-abelian free subgroup of $SU(2,\bar \Q)$  acting on $X:=SU(2)$.
  The space $X$ is a compact Riemannian manifold, and therefore it is a compact path-metric space.
 The group $G$ is finitely generated, and it acts freely and isometrically on $X$. 
 Since it is free, it has finite asymptotic dimension.
The action has a spectral gap by  \cite{zbMATH05236752}.
Therefore, the action of $G$ on $X$ satisfies the assumptions of \cref{kopherthretge9}.
 \hB

  \end{ex}

  \begin{rem}\label{hezkohprherthgeg}
In the present paper, we prefer to work with the universal cone-at-infinity $\cO^{\infty}$ 
where we allow an arbitrarily slow decay (measured with the uniform structure) of the entourages to the right,
since it can be defined as a functor from $\UBC$ to $\BC$.
If the uniform structure  of $X$ admits a countable cofinal family as required by Assumption  \ref{kopherthretge9}.\ref{hpkprhtrgrtegerg}, then one could fix the decay rate $\phi$ (this is the same as a uniform scale \cref{okbepgrbkeprbegfdb})  and consider the version
$  \cO_{\ge 0,\phi}(X)$ of the positive part of the cone  as in \cite[Def. 8.5]{ass} (denoted there by $\tilde \cO_{\phi}(X)$). The euclidean cones   for metric spaces considered in the literature are examples. 
We let $\cO_{\phi}^{\infty}(X)$ be the extension of $\cO_{\ge 0,\phi}(X)$ to the left by a cylinder.
Fixing the decay destroys the functoriality  on $\UBC$. It could also destroy  the $\Z$-action  on $\cO^{\infty}_{\phi}(X)$ by shifts, but this does not happen   for    the euclidean  decay.
By \cite[Lem. 8.7]{ass}, fixing the decay rate does not change the coarse motive of the cone. 
The same rescaling argument as in the proof  of  \cite[Lem. 8.7]{ass}   also shows that  $$\Yo(\cO^{\infty}_{\phi}(X)//G)\simeq \Yo(\cO^{\infty}(X)//G)\ .$$
Therefore,  \cref{kopherthretge9} also shows that
$\Yo(\cO^{\infty}_{\phi}(X)//G)$ is not in $\CM_{\cass}$. 
This, e.g., applies to the euclidean cone for metric spaces.
\hB
\end{rem}

 The definition of $\cO^{\infty}(X)$ and many arguments in the present note employ the   squeezing space
 $\Sq(X)$ introduced in \cref{ugweogwregw}.
  The  technical-looking Assumptions \ref{kopherthretge9}.\ref{lptgeertgetrg}-\ref{fhqwekfqweq} are used to ensure that
    $\Sq(X)//G$  has a cofinal set of  coarse entourages $V$ such that
   $(\Sq(X)//G)_{V}$  has bounded geometry and   its coarse $G$-covering $((G\ltimes \Sq(X))//G)_{U}$
   from \cref{uoiwgrgwegwerfwfwerf}
   has finite asymptotic dimension (here $U$ is derived from $V$ and the subscripts like $(-)_{U}$ indicate that the set is equipped with the coarse structure generated by $U$).  
   These conditions also ensure that
 $(\cO^{\infty}(X)//G)_{V}$ has bounded geometry,  and that  $((G\ltimes \cO^{\infty}(X))//G)_{U}$ has finite asymptotic dimension 
 for a cofinal set of coarse entourages $V$ of $\cO^{\infty}(X)//G$.
   Since finite asymptotic dimension implies that the coarse assembly map is an equivalence,
 \cref{kopherthretge9} (respectively, its proof) shows that $ \cO^{\infty}(X)//G $ (or $\Sq(X)//G$) 
  does not admit a  cofinal set of coarse entourages $V$ such that  $ (\cO^{\infty}(X)//G)_{V}$ (or $(\Sq(X)//G)_{V}$)
  has 
   finite asymptotic dimension, i.e., they do not have  weakly finite asymptotic dimension in the sense of \cite[Def. 10.3]{ass}.

  In order to state the assumptions in a less technical manner in  \cite{Kitsios:2025aa}, 
 it was simply assumed that $X$ is presented by a compact connected Riemannian manifold  on which $G$ acts freely  by Lipschitz maps. Furthermore,  Assumption  \ref{kopherthretge9}.\ref{fhqwekfqweq}
 is weakened to the assumption that $G$ has Property $A$. This is possible since in  \cite{Kitsios:2025aa}, instead of finite asymptotic dimension, 
 the operator norm localization property is used to define the transfer. In this note, however,  we stick to finite asymptotic dimension in order to be able to cite \cite{Bunke:2025aa}.

  In order to show that a motive $Y$  in $\CM$ does not belong to $\CM_{\cass}$ it suffices to consider any spectrum-valued strong coarse homology theory  $E:\BC\to \Sp$, or equivalently, the corresponding colimit-preserving functor $E:\CM^{\strg}\to \Sp$, and to show that $$E(\mu_{Y}): \pi_{*+1}E(\cO^{\infty,\strg}\bP(Y))\to \pi_{*}E  (Y)$$ is not an isomorphism of groups.
 For the proof of  \cref{kopherthretge9}, we  employ the coarse $K$-homology theory $K\cX:\BC\to \Sp$ 
  as introduced in \cite{buen}, \cite{coarsek}.
The assumptions of  \cref{kopherthretge9}, in particular   \ref{kopherthretge9}.\ref{kpbgbrgebgrbgrb},  are  used to produce a  class $u$ in $\pi_{1}K\cX(\cO^{\infty}(X)//G)$   
 which does not belong to the image of 
 coarse assembly map $$K\cX(\mu_{\cO^{\infty}(X)//G}):\pi_{2} K\cX(\cO^{\infty,\strg}\bP(\cO^{\infty}(X)//G)) \to \pi_{1}K\cX(\cO^{\infty}(X)//G)\ .$$ 
 The proof uses transfers along branched coarse  coverings and traces in the version of  \cite{Bunke:2025aa}.  The construction of the class $u$  and the subsequent arguments
 are an  interpretation of \cite[(4.2)]{Kitsios:2025aa} in the current context, and will be carried out in  \cref{hkoeprttrgegertg}.

Note that  \cref{kopherthretge9} cannot be deduced directly from  \cite{Kitsios:2025aa} in a technical sense.
First of all, the  classical coarse $K$-homology functor used in the reference is not defined on all of $\BC$ and also not spectrum-valued. Furthermore, the version of the coarse assembly map
used in  \cite{Kitsios:2025aa} is not a natural transformation between coarse homology theories
in the sense of \cite{buen}. Thus the fact that the coarse assembly map in   the examples of warped cones considered in \cite{Kitsios:2025aa} is not an isomorphism  cannot be used to conclude that
the motives of these warped cones do not belong to $\CM_{\cass}$. The main purpose of the present note
is to demonstrate that, nevertheless, the ideas from \cite{Kitsios:2025aa}  can be adapted to work in the motivic context.

 The \cref{kopherthretge9} supports  the expectation that
 the complexity of the $G$-action on $X$ is non-trivially reflected in the complexity of the coarse  motive  
  $\Yo(\cO^{\infty}(X)//G)$.     
  In view of the variety of examples of $X$ discussed, e.g., in \cite{Fisher_2019}, 
 its is natural  to ask whether they give rise to different motives 
 $\Yo(\cO^{\infty}(X)//G)$ in $\CM$.  We hope to discuss this problem in the future.
 
 The proof of \cref{kopherthretge9}  is based on an application of the $L^{2}$-index theorem
 to sequence spaces. At this point, the  
  current literature refers to \cite[Lem. 6.5]{Willett_2012}, which works  in the context of the classical version of the coarse assembly map whose domain is expressed in terms of analytic $K$-homology. In  \cref{retgtrtbvgfdbgfd}, we provide the analogue of this result for  the coarse assembly map $K\cX(\mu)$ with $\mu$ as in \eqref{gergwerfrefwfw}, which is based on the version of the $L^{2}$-index theorem shown in  \cite{Bunke:2025aa}; see also \cref{hkoperhtetrgetgrt}. Since the complete statement is
  quite technical, we refrain from reproducing it here in the introduction.

 We now describe the contents of the paper in greater detail.

 In \cref{kohpehrtgergtrg}, we introduce the functor $-//G$ and characterize it by universal properties.

In \cref{koppgwerefwerfwerf}, we introduce the  squeezing space functor $$\Sq:\UBC\to \Fun(B\Z,\BC)\ .$$ 
The cone-at-infinity functor is then derived from the squeezing space functor
by
$$\cO^{\infty}(-):=\Sq(-)//\Z\ .$$

In \cref{gojpergewrfwefwf} we calculate the motive of the cone $\cO^{\infty}(X)//G$ in terms of the motive of the quotient of the squeezing space $\Sq(X)//G$.   The squeezing space  comes with a natural big family
$\Sq_{-}(X)//G$ and the $\Z$-action on $\Sq(X)//G$ induces one on the motive
$\Yo(\Sq(X)//G,\Sq_{-}(X)//G)$ in $\CM$. The motivic interpretation of \cite[Sec. (3.2)]{Kitsios:2025aa}
is stated in \cref{lkprhrhertgertgertge} and asserts  an equivalence $$ \Yo(\cO^{\infty}(X)//G)\stackrel{\simeq}{\to}  \colim_{B\Z} \Yo(\Sq(X)//G,\Sq_{-}(X)//G)^{\sign}\ .$$ in $\CM$.
 
 In \cref{joopwbergregrfwrf}, we show under the assumption that the $G$-action on $X$ is uniformly  free and further conditions,
 that $(\Sq(X)//G,\Sq_{-}(X)//G)$ and $(\cO^{\infty}(X)//G,\cO_{-}^{\infty}(X)//G)$ support branched coarse $G$-coverings 
 in the sense of \cite{Bunke:2025aa}. 
The transfer in coarse $K$-homology along these coverings is a tool going into the proof of    \cref{kopherthretge9}.
 
 In \cref{kopggwregweg}, we  consider the universal coarse homology with transfers 
\begin{equation}\label{vsdfcsdvdfvsdfver}\ \Yoctr:\BCtr \to \CMctr\ ,
\end{equation}  see \cite{trans}. 
 We construct actions of the rings
 $$\prod_{\Z} \Z\rtimes \Z\ , \qquad \frac{\prod_{\Z} \Z}{\bigoplus_{\Z} \Z} \rtimes \Z $$ on the motives $ \Yoctr(\Sq(X)//G)$ or 
 $\Yoctr(\Sq(X)//G,\Sq_{-}(X)//G)$ in $\ho(\CMctr)$, respectively. 
We can use these actions in order to manipulate classes in $E(\Sq(X)//G,\Sq_{-}(X)//G)$ for any coarse homology theory with transfers $E$. This, in particular, applies to  the coarse $K$-homology $K\cX$ since it  extends to a coarse homology with transfers   \cite{coarsek}.

In \cref{kohperhretgegtre}, we recall from \cite{ass}  the construction  of the motivic 
 coarse assembly map \eqref{gergwerfrefwfw}.

In \cref{hkoeprttrgegertg1}, we state and prove the $L^{2}$-index theorem for sequence spaces \cref{retgtrtbvgfdbgfd}.

In \cref{hkoeprttrgegertg}, we show  \cref{kopherthretge9} modulo the construction of suitable $K$-theory classes on the squeezing space.

  Finally, in \cref{kophjkertophrtgertgrteg9}, we use the spectral gap  of the action of $G$ on $L^{2}(X,\nu)$
in order to construct the  coarse $K$-homology class $p$
in $\lim_{B\Z} \pi_{0}K\cX(\Sq(X)//G)$ used to show \cref{kopherthretge9}.
Versions of this class have been considered before in   \cite{drno}, \cite{dsa}, \cite{Li_2021}, \cite{Li_2023}, \cite{Kitsios:2025aa} and our contribution here is to put these constructions into the  context of the coarse $K$-homology functor $K\cX$.

{\em Acknowledgement:  The author was supported by the SFB 1085 (Higher Invariants) funded by the Deutsche Forschungsgemeinschaft (DFG).  He  thanks Th. Schick and F. Vigolo for interesting discussions on \cite{Kitsios:2025aa}, which motivated him to write these notes. }

\section{Quotients}\label{kohpehrtgergtrg}

We consider the  symmetric monoidal category of bornological coarse spaces $\BC$ as introduced in \cite{buen}. 
Declaring close maps to be equal, we define a  quotient category
$$q:\BC\to \BC_{h}\ .$$
The following proposition clarifies the universal property of this construction.  
\begin{prop}[\cite{Heiss:2019aa}] The  quotient
functor $q:\BC\to \BC_{h}$ is the Dwyer-Kan localization at the coarse equivalences.
\end{prop}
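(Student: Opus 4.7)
The claim unpacks into two requirements: the functor $q\colon\BC\to\BC_h$ must invert coarse equivalences, and it must be universal in the $\infty$-categorical sense. The first is immediate from the construction: if $f\colon X\to Y$ is a coarse equivalence with coarse inverse $g\colon Y\to X$, then by definition $fg$ and $gf$ are close to $\id_Y$ and $\id_X$. Since close maps coincide in $\BC_h$, these closeness relations upgrade to genuine equalities, so $q(f)$ becomes an isomorphism.

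For universality, let $\mathcal{D}$ be any $\infty$-category and let $F\colon\BC\to\mathcal{D}$ send coarse equivalences to equivalences. The crux is to show that $F$ automatically identifies close maps. Given close $f,g\colon X\to Y$, I would form the cylinder $I\otimes X$, where $I=\{0,1\}$ carries the maximal bornological coarse structure. Each inclusion $i_k\colon X\to I\otimes X$ ($k=0,1$) is a coarse equivalence with coarse inverse the projection $\pi\colon I\otimes X\to X$: one has $\pi\circ i_k=\id_X$, and $i_k\circ\pi$ is close to $\id_{I\otimes X}$ because the full relation on $I$ is a coarse entourage. Closeness of $f$ and $g$ is exactly the condition ensuring that the map $h\colon I\otimes X\to Y$ with $h(0,x)=f(x)$ and $h(1,x)=g(x)$ is a morphism in $\BC$. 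Since $F(\pi)$ is invertible and $F(\pi)\circ F(i_k)=\id_{F(X)}$ for both $k$, the morphisms $F(i_0)$ and $F(i_1)$ are canonically equivalent (both being inverses of $F(\pi)$), whence $F(f)=F(h)\circ F(i_0)\simeq F(h)\circ F(i_1)=F(g)$.

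The cylinder $I\otimes-$ is functorial in $X$ and admits iterations $I^{\otimes n}\otimes X$ whose projections remain coarse equivalences; these provide a coherent system of higher homotopies refining the identifications above. The plan is to package this into the assertion that $q$ is a weak equivalence of relative categories $(\BC,W)\to(\BC_h,\mathrm{iso})$, where $W$ denotes the coarse equivalences, so that the nerves compute the same Dwyer--Kan localization. From this one concludes that restriction along $q$ induces an equivalence $\Fun(\BC_h,\mathcal{D})\stackrel{\simeq}{\to}\Fun_W(\BC,\mathcal{D})$ for every $\infty$-category $\mathcal{D}$, which is the defining property of the Dwyer--Kan localization.

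The principal obstacle is this last coherence step: the cylinder argument identifies $F(f)$ with $F(g)$ only up to a specified equivalence in $\mathcal{D}$, and one must bootstrap the resulting factorization $\bar F\colon\BC_h\to\ho(\mathcal{D})$ into a genuine $\infty$-functor $N(\BC_h)\to\mathcal{D}$. The functoriality and naturality of the cylinder, together with the iterated cylinders, supply the higher simplicial data one needs for a marked-simplicial-set or hammock-localization argument to ensure the choices of lifts assemble coherently. Everything beyond this coherence verification is a routine unpacking of the universal property.
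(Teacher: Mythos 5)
The easy half of your argument is correct and standard: $q$ inverts coarse equivalences, and for any $F\colon\BC\to\mathcal{D}$ inverting coarse equivalences, the two-point cylinder argument shows $F(f)\simeq F(g)$ whenever $f$ and $g$ are close, so that $F$ factors through $\BC_{h}$ after passing to $\ho(\mathcal{D})$. But the actual content of the proposition --- and the reason the paper does not prove it and instead cites \cite{Heiss:2019aa} --- is precisely the step you defer: that the canonical comparison functor from the $\infty$-categorical localization $\BC[W^{-1}]$ to the ordinary $1$-category $\BC_{h}$ is an equivalence, equivalently that the mapping spaces of the Dwyer--Kan localization are homotopy discrete and given by morphisms up to closeness. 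Your proposal asserts that the functoriality of the cylinder and its iterates ``supply the higher simplicial data'' needed for a hammock or marked-simplicial-set argument, but no such argument is given, and as stated the outline cannot suffice: applied verbatim to topological spaces with the usual cylinder (end inclusions are homotopy equivalences, homotopic maps are identified by any functor inverting homotopy equivalences, iterated cylinders exist and are functorial), it would ``prove'' that the localization of $\mathbf{Top}$ at homotopy equivalences is the naive homotopy $1$-category, which is false. So the identification of $F(f)$ with $F(g)$ up to a chosen equivalence, plus formal cylinder bookkeeping, does not yield the claimed equivalence $\Fun(\BC_{h},\mathcal{D})\simeq\Fun_{W}(\BC,\mathcal{D})$.

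The missing idea is to isolate and exploit what is special about the coarse cylinder: a morphism $\{0,1\}_{\max,\max}\otimes X\to Y$ is literally nothing more than a pair of close maps, so a coarse homotopy (and likewise any higher homotopy) is a \emph{property} of its endpoints rather than additional data, and closeness is a congruence compatible with all compositions. Even granting this, one must still actually compute the mapping spaces of the localization --- for instance via the hammock localization, or by exhibiting the nerve of $\BC_{h}$ as a fibrant replacement of the marked nerve of $(\BC,W)$ --- and verify that every zig-zag and every higher simplex can be contracted onto the discrete hom-sets of $\BC_{h}$; this uses genuine features of $\BC$ beyond the formal cylinder structure. That verification is the substance of Heiss's result, so labelling it ``routine unpacking'' and ``coherence verification'' is where your proof has a genuine gap rather than a cosmetic one.
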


In the following, we provide models for the homotopy coequalizer and homotopy quotients by group actions for the homotopy theory on $\BC$ generated by the coarse equivalences.
We start with coequalizers.
We consider two maps  \begin{equation}\label{ojopwgefwr}
\xymatrix{Y\ar@/^0.3cm/[r]^{f}\ar@/_0.3cm/[r]_{g}&X}
\end{equation}  
in $\BC$. \begin{ddd}
We let $X//(f,g)$ denote the set $X$ with the bornology induced from  the bornology of $X$ and the coarse structure generated by the coarse structure of $X$ and the entourage $\{(f(y),g(y))|y\in Y\}$.\end{ddd}

If $f$ and $g$  are  bornological maps, i.e., maps sending bounded sets to bounded sets (note that this condition only depends on their equivalence classes in $\BC_{h}$), then this  possibly bigger coarse structure  is still
compatible with the bornology, and  $X//(f,g)$ is again a bornological coarse 
space. In this case, the identity map of the underlying sets   is a morphism $e:X\to X//(f,g)$  in $\BC$.

\begin{lem}\label{kopheergtrge} If $f$ and $g$   are bornological maps, then \begin{equation}\label{ojopwgefwr1}
\xymatrix{Y\ar@/^0.3cm/[r]^{q(f)}\ar@/_0.3cm/[r]_{q(g)}&X\ar[r]^-{q(e)}&X//(f,g)}
\end{equation}  
 is a coequalizer diagram in $\BC_{h}$.
\end{lem}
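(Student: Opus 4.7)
The plan is to verify directly that \eqref{ojopwgefwr1} satisfies the universal property of a coequalizer in the $1$-category $\BC_{h}$, exploiting the fact that a morphism in $\BC_{h}$ is simply a closeness class of morphisms in $\BC$ and that the bornology of $X//(f,g)$ coincides with that of $X$.

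First I would check that $q(e) \circ q(f) = q(e) \circ q(g)$. Since $e$ is the identity on underlying sets, this amounts to showing that $f$ and $g$, viewed as maps $Y \to X//(f,g)$, are close. But the set $\{(f(y),g(y))\mid y\in Y\}$ is by construction a generator of the coarse structure of $X//(f,g)$, hence is an entourage, which is exactly the closeness condition.

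Next I would establish the universal property. Let $h\colon X\to Z$ be a morphism in $\BC$ whose images in $\BC_{h}$ coequalize $q(f)$ and $q(g)$, i.e., $h\circ f$ and $h\circ g$ are close as maps $Y\to Z$. The candidate factorization is the same underlying set-theoretic map $h$, now regarded as a map $\tilde h\colon X//(f,g)\to Z$. Properness of $\tilde h$ is immediate since $X$ and $X//(f,g)$ share the same bornology. To verify controlledness it suffices to test on generators of the coarse structure of $X//(f,g)$: the original entourages of $X$ are handled because $h$ is controlled, while the new generator $\{(f(y),g(y))\mid y\in Y\}$ is mapped by $\tilde h\times\tilde h$ to $\{(h(f(y)),h(g(y)))\mid y\in Y\}$, which is an entourage of $Z$ precisely because $h\circ f$ and $h\circ g$ are close. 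Thus $\tilde h$ is a morphism in $\BC$ satisfying $\tilde h\circ e = h$ on underlying maps.

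For uniqueness in $\BC_{h}$, suppose $h_{1},h_{2}\colon X//(f,g)\to Z$ are morphisms in $\BC$ with $q(h_{i}\circ e)=q(h)$. Then $h_{1}\circ e$ is close to $h_{2}\circ e$ as maps $X\to Z$, which means $\{(h_{1}(x),h_{2}(x))\mid x\in X\}$ is an entourage of $Z$. Since $e$ is the identity on underlying sets and closeness depends only on the source set and the target's coarse structure, this identical entourage witnesses that $h_{1}$ and $h_{2}$ are close as maps $X//(f,g)\to Z$, hence equal in $\BC_{h}$.

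I do not expect a serious obstacle here; the content of the argument is entirely in the definition of the coarse structure of $X//(f,g)$ as the smallest one compatible with the bornology that makes the new entourage $\{(f(y),g(y))\}$ a controlled set, which is dually matched by the requirement that $h\circ f$ and $h\circ g$ be close. The only point deserving slight care is to observe that controlledness of $\tilde h$ is implied by its behaviour on a generating family of entourages, so that one does not need to manipulate the whole generated coarse structure explicitly.
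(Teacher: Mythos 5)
Your proof is correct and follows exactly the route the paper takes: the paper's own proof is just the remark that one verifies the universal property of the coequalizer directly (referring to the analogous argument for the group quotient), and your write-up supplies precisely these routine checks — closeness of $e\circ f$ and $e\circ g$ via the generating entourage, factorization by the same underlying map using that controlledness can be tested on generators, and uniqueness from closeness in the target.
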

\begin{proof}
One checks in a straightforward manner that $q(e)$ has the desired universal property. See the proof of \cref{kopgwergwerfw} for a slightly more detailed analogous argument.
\end{proof}

Let $G$ be a group, and let $BG$ denote the category with one object having $G$ as its endomorphisms.     \begin{ddd} \label{kokptoerhertgretgrtgrtg} We define  the  functor
$$-//G:\Fun(BG,\BC)\to \BC$$  such that the 
 underlying bornological space of $X//G$ is that of $X$, and 
 the coarse structure  is generated by the coarse structure of $X$ and
the entourages $U_{g}:=\{(gx,x)\mid x\in X\}$ of $X$ for all $g$ in $G$. 
 \end{ddd}
 
  One  checks that the bornology on $X//G$ is compatible with the coarse structure, so that the functor is well-defined.
Note that every $g$ in $G$ acts on $X$ by an automorphism and is therefore bornological.
 
\begin{lem}\label{kopgwergwerfw}
We have a canonical equivalence
$$q(X//G)\simeq \colim_{BG} q(X)\ .$$
 \end{lem}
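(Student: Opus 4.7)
The plan is to prove the equivalence by exhibiting $q(X//G)$ together with the canonical map $q(e): q(X)\to q(X//G)$ as a colimit cocone in $\BC_h$; this amounts to verifying a universal property, just as in the proof of \cref{kopheergtrge}.

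First I would construct the canonical map. Since the coarse structure of $X//G$ extends that of $X$ and the bornologies agree, the identity of underlying sets defines a morphism $e:X\to X//G$ in $\BC$. For each $g\in G$ the action map $L_g:X\to X$ satisfies $(e\circ L_g)(x)=gx$ and $e(x)=x$, and by construction the entourage $U_g=\{(gx,x)\mid x\in X\}$ lies in the coarse structure of $X//G$. Hence $e\circ L_g$ is close to $e$, so $q(e\circ L_g)=q(e)$ for every $g$, i.e.\ $q(e)$ is $G$-invariant and defines a cocone under the $BG$-diagram $q(X)$. This produces the canonical comparison map $\phi:\colim_{BG}q(X)\to q(X//G)$.

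Next I would check that $(q(X//G),q(e))$ is initial among such cocones, which gives an inverse to $\phi$. Let $Z\in\BC$ and let $f:q(X)\to Z$ be a morphism in $\BC_h$ with $q(f)=q(f\circ L_g)$ for all $g\in G$. Choose a representing morphism $\tilde f:X\to Z$ in $\BC$. The equalities $q(\tilde f)=q(\tilde f\circ L_g)$ mean that $\tilde f$ and $\tilde f\circ L_g$ are close, so for each $g$ there is an entourage $V_g$ of $Z$ with $\tilde f(U_g)\subseteq V_g$. The claim then is that $\tilde f$, viewed as the same set map, is already a morphism $X//G\to Z$ in $\BC$: properness holds because the bornology of $X//G$ equals that of $X$; controlledness holds because the coarse structure of $X//G$ is generated under finite unions, compositions and subsets by entourages of $X$ (which $\tilde f$ maps to entourages of $Z$ by hypothesis) together with the $U_g$ (which map into the $V_g$), and entourages of $Z$ are closed under these operations. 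Hence $q(\tilde f):q(X//G)\to Z$ factors $f$ through $q(e)$.

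Finally I would verify uniqueness of the factorization in $\BC_h$: any two morphisms $X//G\to Z$ in $\BC$ whose compositions with $e$ are close must themselves be close, since $e$ is the identity on underlying sets and closeness is a condition on the set-theoretic graph of a map. Thus the factorization of $f$ through $q(e)$ is unique in $\BC_h$, establishing the universal property of the colimit and therefore providing a canonical inverse to $\phi$. I do not foresee a serious obstacle; the only point requiring mild attention is the controlled-map verification in the third step when $G$ is not finitely generated, but this is handled by the standard observation that a coarse structure is determined by its generators under the usual closure operations, each of which is preserved by $\tilde f$.
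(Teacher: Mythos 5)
Your proposal is correct and follows essentially the same route as the paper: you verify that the underlying set map of any $G$-invariant (up to closeness) morphism out of $X$ is already controlled and proper as a map out of $X//G$, and that the factorization is unique in $\BC_{h}$ because closeness is a condition on the underlying set-theoretic graph. This is precisely the universal-property check the paper's proof sketches; you merely spell out the generation argument for the coarse structure of $X//G$ that the paper leaves implicit.
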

\begin{proof}
Let $Y$ be in $\BC$ and $f:X\to Y$ be a morphism in $\BC$ such that $q(f)$ is $G$-invariant.
We consider the diagram
$$ 
\xymatrix{q(X)\ar[dr]\ar[rr]^{q(f)}&&q(Y)\\&q(X//G)\ar@{..>}[ur]&}\ .$$
One first checks that the underlying  setmap of $f$ can be used to produce the dotted arrow.
Furthermore, any other choice  of such a map is close to $f$ and therefore equal  to $f$ in $\BC_{h}$.
 \end{proof}

\begin{ex}\label{gjiwioergwrf}
A   coarsely invariant   functor $E:\BC\to \cC$ has a unique factorization
$$ 
\xymatrix{\BC\ar[dr]_{q}\ar[rr]^{E}&&\cC\\&\BC_{h}\ar@{..>}[ur]&}\ .$$
Assume that $\cC$ admits colimits indexed by $BG$. For  $X$ in $\Fun(BG,\BC)$ we then get an assembly morphism
\begin{equation}\label{gerfwerfwrwre}\xymatrix{&E(X)\ar[dr]\ar[dl]_{\can}&\\\colim_{BG}E(X)\ar@{..>}[rr]&&E(X//G)} 
\end{equation}
indicated by the dotted arrow, where $\can$ is the canonical map to the colimit, and the down-right arrow is induced by the morphism $X\to X//G$.  \hB \end{ex}

If $H$ is a second group, then the results above extend to the category $H\BC$ (see \cite{equicoarse})
of  
 $H$-bornological coarse spaces  with a $G$-action by automorphisms.
 
 \begin{ex} \label{hokeprthgrtgertgt} 
 For a set $X$, we let $X_{\min,\min}$ denote the bornological coarse space with the minimal coarse and bornological structures. The coarse entourages of $X_{\min,\min}$ are the subsets of the diagonal, and the bounded subsets are the  finite subsets.
 Let $G$ be a group.  
We consider $G_{\min,\min}$ in $G\BC$ using the left action along with the additional  right action of $G$. Then we have
 $$G_{\can,\min}\cong G_{\min,\min}//G$$ in $G\BC$, where "$\can$" indicates the canonical $G$-coarse structure on $G$ generated by the entourages $\{(g,h)\}$ for all pairs $g,h$ of elements of $G$.
 We consider the equivariant coarse $K$-homology functor $K\cX^{G}$ \cite{coarsek}.
Then \cref{gjiwioergwrf} provides the assembly map
$$\colim_{BG}K\cX^{G}(G_{\min,\min})\to K\cX^{G}(G_{\can,\min})\ .$$
 By making its domain and target explicit, it can be  identified with a version of the Davis-Lück/Baum-Connes  assembly map     (see \cite{kranz}) $$\colim_{BG}KU\to K(C_{r}^{*}(G))$$  for the family of the trivial subgroup.
 \hB 
 \end{ex}

\section{The squeezing space and the cone}\label{koppgwerefwerfwerf}
Recall the   category $\UBC$ of uniform bornological coarse spaces \cite{buen}, \cite{ass}. We start  with the 
   introduction of  the  squeezing space functor $$\Sq:\UBC\to \Fun(B\Z,\BC)\ .$$  
    The underlying bornological space of $\Sq(X)$ is the set $\Z \times X$ with
  the bornology generated by the subsets $F\times B$ for finite subsets $F$ of $\Z$ and bounded subsets $B$ of $X$.
  The coarse structure consists of all  
  sub-entourages of  entourages of the form \begin{equation}\label{rgesgerg}
(\diag(\Z)\times U)\cap W\ ,
\end{equation} where
$U$ is a coarse entourage of $X$ 
and $W$ is an entourage with the property that for every uniform entourage $V$ of $X$ there exists $n_{0}$ in $\Z$ 
such that for all $n$ in $\Z$ with $n\ge n_{0}$ we have $W_{n}\subseteq V$.  Here for an entourage $W$ on $\Z\times X$ we write $W_{n}:=W\cap (\{n\}\times X)^{2}$ for its restriction to the $n$th component of the squeezing space.
\begin{rem} The coarse structure of  $\Sq(X)$ is the hybrid coarse structure \cite{nw1}
on the bornological coarse space $\Z_{\min,\min}\otimes X$ associated  to the  additional compatible product  uniform  structure  and the big family $\Sq_{-} (X):=( (-\infty,n]\times X)_{n\in \nat}$; see \cite[Sec. 5.1]{buen}.
\hB\end{rem}The $\Z$-action on the squeezing space is given by the shift map
\begin{equation}\label{gertgtg5gtg}
t:\Sq(X)\to \Sq(X)\ , \quad (n,x)\mapsto (n+1,x)\ .
\end{equation}
A map $f:X \to Y$ of uniform bornological coarse spaces functorially induces a $\Z$-equivariant  map $$\Sq( f): \Sq(X)\to \Sq(Y) \ , \quad \Sq(f)(n,x):=(n,f(x))$$
of squeezing spaces.    
 \begin{ddd}\label{ugweogwregw} 
 We call the functor $X\mapsto  \Sq(X)$ described above the 
  squeezing space functor.
  \end{ddd}
  For $n$ in $\nat$ we let $\Sq_{\le n}(X) $  denote the subspace of $ \Sq(X)$ consisting of the points $(k,x)$ with $k\le n$. These subspaces generate the big family $\Sq_{-}(X)$ on $\Sq(X)$.
  The symbols $\Sq_{>n}(X)$ or $\Sq_{n}(X)$ have analogous interpretations.
\begin{rem}  Note that  $\Sq(X)$ does not belong to $\Z\BC\subseteq \Fun(B\Z,\BC)$  in general since it does not admit enough $\Z$-invariant coarse entourages.
 \hB \end{rem}

 %

\begin{ddd}\label{lpozhrtrger}
We define the geometric cone-at-infinity functor as the composition
$$\cO^{\infty}:\UBC\stackrel{\Sq}{\to} \Fun( B\Z,\BC)\stackrel{-//\Z}{\to} \BC\ .$$
\end{ddd}

\begin{rem}
The cone functors in \cite{buen}, \cite{ass} were defined using $\R$ instead of $\Z$. The canonical inclusion
$\Z\to \R$ induces a coarse equivalence between the present definition and the previous definitions in the references. 
 \hB
\end{rem}

The identity of underlying sets provides a map
$$\Sq(X) \to \Z_{\min,\min}\otimes c(X)$$
in $\Fun(B\Z,\BC)$, where $c$ is as in \eqref{fwerfvsdv}. 
Applying $-//\Z$  and using \cref{hokeprthgrtgertgt} we get the geometric cone boundary
\begin{equation}\label{erthertgrt}
\partial^{\geom}:\cO^{\infty}(X)\to \Z_{\can,\min}\otimes c(X)\ .
\end{equation}

%
%
%
%
%
%
%
%
%
%
%
%

\section{The motive of the cone}\label{gojpergewrfwefwf}

Recall the universal coarse homology theory  $\Yo$ from \eqref{gjiwejrgoewrijogwerf}.
If $X$ is a bornological coarse space with a big family $\cY$, then we set
$$\Yo(\cY):=\colim_{Y\in \cY} \Yo(Y)\ , \quad  \Yo(X,\cY):=\Cofib(\Yo(\cY)\to \Yo(X))\ .$$

Let $X$ be in $\UBC$.
The generating members $ \Sq_{\le n}(X)$ of the big family $\Sq_{-}(X) $  are preserved by the $\nat$-action given by down-shifts.
We therefore get an $\nat$-action  by down-shifts on $\Yo(\Sq(X),\Sq_{-}(X))$ in $\CM$.
Since all elements of $\nat$ act by equivalences, it extends to a $\Z$-action.
We indicate the twist of this action by  the sign character $\sign:\Z\to \{\pm 1\}$
by a superscript $\sign$.
Implicitly we use here  the existence of a corresponding functor $\sign:B\Z\to B\{\pm 1\} \to B\Aut_{\Sp}(S)$,
and the tensor structure $\CM\otimes \Sp\to \Sp$.

%

\begin{prop}\label{lptegtrtgetrgterg}
We have an equivalence (natural in $X$) 
\begin{equation}\label{fwefwefweerwf1}\colim_{B\Z} \Yo(\Sq(X),\Sq_{-}(X))^{\sign} \stackrel{\simeq}{\to} \Yo(\cO^{\infty}(X))
\end{equation} 
in $\CM$.    \end{prop}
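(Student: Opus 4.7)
The plan is to identify both sides of the stated equivalence with $\Sigma\,\Yo(c(X))$ via a Mayer--Vietoris computation, and to recognize the canonical comparison map as realizing this common identification.

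I would first construct the map. The morphism $\Sq(X)\to\cO^{\infty}(X)$, which is the identity on underlying sets, is $\Z$-equivariant once $\cO^{\infty}(X)$ is endowed with the trivial $\Z$-action: the shift $t$ is close to $\id_{\cO^{\infty}(X)}$ in the coarse structure obtained from $//\Z$, so $\Yo(t)=\id$ on the target. Composing with the cofiber projection $\Yo(\Sq(X))\to\Yo(\Sq(X),\Sq_{-}(X))$ and passing to the sign-twisted $B\Z$-coinvariants gives the candidate map stated in the proposition.

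Next I would compute the target by an excisive decomposition. Write $\cO^{\infty}(X)=\cO^{\infty}_{\ge 0}(X)\cup\cO^{\infty}_{\le 0}(X)$ with intersection $\{0\}\times X$. The up-shift is close to $\id$ in the coarse structure of $\cO^{\infty}(X)$, preserves $\cO^{\infty}_{\ge 0}(X)$, and escapes any bounded set under iteration, so it witnesses the flasqueness of the upper half-cone; the down-shift handles the lower half-cone analogously. Both motives $\Yo(\cO^{\infty}_{\ge 0}(X))$ and $\Yo(\cO^{\infty}_{\le 0}(X))$ therefore vanish, and the Mayer--Vietoris cofiber sequence collapses to
\[
\Yo(\cO^{\infty}(X))\simeq\Sigma\,\Yo(\{0\}\times X)\simeq \Sigma\,\Yo(c(X)),
\]
where the final step uses that the shift entourages restrict trivially to $\{0\}\times X$, leaving only the entourages inherited from $X$.

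On the source side I would apply the sign-twisted $B\Z$-coinvariants to the $\Z$-equivariant cofiber sequence
\[
\Yo(\Sq_{-}(X))\to \Yo(\Sq(X))\to \Yo(\Sq(X),\Sq_{-}(X))
\]
and identify the outcome with the same $\Sigma\,\Yo(c(X))$, compatibly with the candidate map from the first paragraph. The shift acts cofinally on the filtration $\{\Sq_{\le n}(X)\}_n$ defining $\Sq_{-}(X)$, so $\Yo(t)\simeq \id$ on $\Yo(\Sq_{-}(X))$, and the sign twist turns each colimit computation into $\cofib(1+t)$. The main obstacle is exactly this identification: pinning down the signs so that the two computations agree on the nose, without spurious mod-$2$ corrections arising from the $\cofib(2\,\id)$ contribution of $\colim_{B\Z}\Yo(\Sq_{-}(X))^{\sign}$. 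I expect it to reduce to checking that the candidate map factors compatibly through the pair $(\cO^{\infty}_{\ge 0}(X),\cO^{\infty}_{\le 0}(X))$ used in the Mayer--Vietoris on the target, so that the resulting square of cofibers is cocartesian.
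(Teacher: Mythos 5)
Your computation of the target breaks down at its key step: the up‑shift does \emph{not} witness flasqueness of $\cO^{\infty}_{\ge 0}(X)$. It is close to the identity (thanks to the shift entourages added by $-//\Z$) and it escapes every bounded set, but the remaining flasqueness condition --- that $\bigcup_{k\in\nat}(f^{k}\times f^{k})(U)$ be an entourage for every entourage $U$ --- fails: entourages of the cone must have components which become small with respect to the uniform structure as the cone coordinate tends to $+\infty$, while pushing a generating entourage upwards by all $k$ places a copy of its level‑$0$ component (a fixed uniform entourage of $X$) at every level. So the union is not an entourage unless $X$ is essentially discrete. (The down‑shift does witness flasqueness of $\cO^{\infty}_{\le n}(X)$, which is what the actual proof uses, because entourages do not degrade towards $-\infty$.) This is not a repairable technicality: if $\cO^{\infty}_{\ge 0}(X)$ were flasque, Mayer--Vietoris would give $\Yo(\cO^{\infty}(X))\simeq \Sigma\Yo(c(X))$ for every $X$, the cone boundary would always be an equivalence, and $\cO^{\infty}$ would carry no local information at all --- contradicting the role of the cone in \cref{kopherthretge9}. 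Hence neither side of the asserted equivalence is $\Sigma\Yo(c(X))$ in general, and the plan of identifying both sides with it cannot succeed; the analogous unjustified step on the source side is the claim $\Yo(t)\simeq\id$ on $\Yo(\Sq_{-}(X))$, which also fails since $t$ is not close to the inclusion on $\Sq(X)$.

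There is a second, independent problem with your candidate map. Since the identity‑on‑sets map $\iota:\Sq(X)\to\cO^{\infty}(X)$ satisfies $\iota\circ t$ close to $\iota$, the composite of $1+t$ on $\Yo(\Sq(X),\Sq_{-}(X))$ with $\Yo(\iota)$ is $2\,\Yo(\iota)$, not $0$; so $\Yo(\iota)$ does not descend to the sign‑twisted colimit $\cofib(1+t)$ at all. (It does descend to the untwisted colimit $\cofib(1-t)$, but that is a different morphism, and whether it is an equivalence is explicitly left open in the paper.) The paper's proof avoids both problems simultaneously: one uses the exponential embedding $\iota^{\exp}$, $(n,x)\mapsto(2^{n},x)$ for $n\ge 0$, and the coarsely excisive decomposition of $\cO^{\infty}(X)$ into the dyadic slabs $[2^{n-1},2^{n}]\times X$ sorted by parity of $n$, whose intersection is the image of $\iota^{\exp}$. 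After quotienting by the big family $\cO^{\infty}_{-}(X)$ (harmless by flasqueness of $\cO^{\infty}_{\le n}(X)$), the two inclusions of the intersection are identified with the identity and with the shift, so Mayer--Vietoris yields a fibre sequence whose first map is $1+t$ on $\Yo(\Sq(X),\Sq_{-}(X))$, exhibiting $\Yo(\cO^{\infty}(X))$ as $\colim_{B\Z}\Yo(\Sq(X),\Sq_{-}(X))^{\sign}$ without computing either side beforehand. You would need to replace your naive comparison map and the half‑cone decomposition by some such reparametrized (telescope‑type) decomposition for the argument to go through.
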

\begin{proof}
The following Mayer-Vietoris argument is an abstraction of  \cite[Sec. 3]{Kitsios:2025aa}.
We consider
  the morphism
$$\iota^{\exp}: \Sq(X)\to \cO^{\infty}(X)\ , \quad  (n,x)\mapsto \left\{\begin{array}{cc} (n+1,x)&n<0\\ (2^{n},x) &n\ge 0  \end{array} \right.\ .$$
We further define the subset
$\cO^{odd}(X)\subseteq \cO^{\infty}(X)$ consisting of  
 $\cO_{\le 0}^{\infty}(X)$ and the intervals $[2^{n-1},2^{n}]\times X$ for all odd $n$ in $\nat$,
 and
 $\cO^{ev}(X)$ consisting of $\cO_{\le 1}^{\infty}(X) $ and the intervals $[2^{n-1},2^{n}]\times X$ for all  even $n\ge 2$ in $\nat$. The decomposition $(\cO^{odd}(X),\cO^{ev}(X))$ of $\cO^{\infty}(X)$ is coarsely excisive  and
 $\cO^{odd}(X)\cap \cO^{ev}(X)$ is precisely the image of $\iota^{\exp}$.
 We get a push-out square 
 $$\xymatrix{ \Yo(\Sq(X)_{\iota^{\exp}})\ar[r]\ar[d] &\Yo(\cO^{odd}(X)) \ar[d] \\\Yo(\cO^{ev}(X)) \ar[r] &\Yo(\cO^{\infty}(X)) } \ ,$$
 where the top horizontal and left vertical maps are induced by the corestrictions of 
 $\iota^{\exp}$, and  the subscript  indicates that we equip
the set with the coarse structure induced via $\iota^{\exp}$. We decompose
 $\Sq(X) $ into $\Sq^{odd}(X) $ consisting of all $(n,x)$ with $n< 0$ or $n$ odd in $\nat$, and
$\Sq^{ev}(X)$ consisting of $(n,x)$ with $n<0$ or $n$ even in $\nat$.
The restrictions of $\iota^{\exp} $ to  maps $\Sq^{odd/ev}(X)_{ \iota^{\exp}}\to  \cO^{odd/ev}(X)$
are coarse homotopy equivalences.

Taking the quotient by the big family $\cO^{\infty}_{-}(X)$ we 
 get a pushout
\begin{equation}\label{sfgsfdgewrgsffdgs}\xymatrix{ \Yo(\Sq(X)_{\iota^{\exp}},\Sq(X)_{\iota^{\exp}}\cap\cO^{\infty}_{-}(X))\ar[r]\ar[d] &\Yo(\cO^{odd}(X), \cO^{odd}(X)\cap\cO^{\infty}_{-}(X)) \ar[d] \\\Yo(\cO^{ev}(X),\cO^{ev}(X)\cap\cO^{\infty}_{-}(X)) \ar[r] &\Yo(\cO^{\infty}(X),\cO^{\infty}_{-}(X)) } \ .
\end{equation} 
Using $u$-continuity the upper-left corner can be written in the form
\begin{equation}\label{vsdfsdfvsdfvsdfv}  \Yo(\Sq(X) ,\Sq_{-}(X) )\simeq   \Yo(\Sq^{odd}(X) ,\Sq^{odd}_{-}(X) )\oplus   \Yo(\Sq^{ev}(X) ,\Sq_{-}^{ev}(X) )\ ,
\end{equation} 
and $\iota^{\exp}$ induces equivalences of these summands with the upper-right corner and the   lower-left corner of 
the square \eqref{sfgsfdgewrgsffdgs}.
In order to understand this note that $u$-continuity says that  $\Yo(\Sq(X)_{\iota^{\exp}},\Sq(X)_{\iota^{\exp}}\cap\cO^{\infty}_{-}(X))$ is the colimit
of $\Yo(\Sq(X)_{V},\Sq_{-}(X)_{V})$ over the  coarse entourages $V$ of $\Sq(X)_{\iota^{\exp}}$.
For every such coarse entourage $V$  
the components $\{n\}\times X$ become coarsely disjoint for sufficiently large $n$.
As we form the quotient by the components with small index anyway
we can can treat them all  as coarsely disjoint.

Since $\cO_{\le n}^{\infty}(X)$ is flasque for every $n$ in $\nat$ we have
$\Yo(\cO_{\le n}^{\infty}(X))\simeq 0$ and hence,
$\Yo(\cO_{-}^{\infty}(X))\simeq 0$.
Consequently the canonical map is an equivalence
$$ \Yo(\cO^{\infty}(X)) \stackrel{\simeq}{\to}\Yo(\cO^{\infty}(X),\cO_{-}^{\infty}(X))\ .$$

So, putting everything together, the square \eqref{sfgsfdgewrgsffdgs} induces a fibre sequence
\begin{equation}\label{gwereferfrewfw}\Yo(\Sq(X) ,\Sq_{-}(X) )\stackrel{F}{\to} \Yo(\Sq(X) ,\Sq_{-}(X) )\to \Yo(\cO^{\infty}(X))\ .
\end{equation} 
It remains to identify the map $F$.
In terms of the decomposition 
\eqref{vsdfsdfvsdfvsdfv}
it is given by a two-by-two matrix.
The diagonal entries are given by 
the restriction of $\iota^{\exp}$ and its (coarse homotopy) inverse, and therefore are the identities.
The maps
$$ \Yo(\Sq^{odd/ev}(X) ,\Sq^{odd/ev}_{-}(X) ) \to  \Yo(\Sq^{ev/odd}(X) ,\Sq^{ev/odd}_{-}(X) ) $$
are given by the bottom inclusions into the intervals for $n\ge 1$ composed with the 
inverse of the top inclusion. Hence they are induced by the restriction of the shift map $t$ in \eqref{gertgtg5gtg}.
We conclude that $F\simeq 1+t$. 
 
 The fibre sequence is the usual formula witnessing $\Yo(\cO^{\infty}(X))$ as  
 the colimit over $B\Z$  of $\Yo(\Sq(X) ,\Sq_{-}(X) )^{\sign}$. 

Note that the whole construction is natural in $X$.
\end{proof}

\begin{rem}
The   quotient map $\Sq(X)\to \Sq(X)//\Z\cong \cO^{\infty}(X)$
 induces  by \cref{gjiwioergwrf}  the assembly map
$$\colim_{B\Z} \Yo(\Sq(X))\to \Yo(\cO^{\infty}(X))\ .$$
We know that
$$\colim_{B\Z} \Yo(\Sq_{-}(X))\to \colim_{B\Z} \Yo(\Sq(X))\to \Yo(\cO^{\infty}(X))$$ vanishes 
since already  
  $ \Yo(\Sq_{-}(X))\to   \Yo(\cO^{\infty}(X))$ vanishes.
  So we get an assembly  morphism
  $$\colim_{B\Z} \Yo(\Sq(X),\Sq_{-}(X))\to \Yo(\cO^{\infty}(X))\ .$$
  One could ask whether this morphism is an equivalence too.  
   \hB \end{rem}
%
%
%
%


The following result can be shown by the same argument.
Assume that $X$ is in $\Fun(BG,\UBC)$.  
\begin{prop} \label{lkprhrhertgertgertge}
In $\CM$ we have a natural equivalence 
\begin{equation}\label{fwefwefweerwf}\colim_{B\Z} \Yo(\Sq(X)//G,\Sq_{-}(X)//G)^{\sign} \stackrel{\simeq}{\to} \Yo(\cO^{\infty}(X)//G)
\end{equation} 
in $\CM$.
\end{prop}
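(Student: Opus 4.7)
The plan is to re-run the Mayer--Vietoris argument from the proof of \cref{lptegtrtgetrgterg} with the functor $-//G$ applied throughout. The key structural observation is that $G$ acts on $X$ only, and in particular commutes with the squeezing/cone coordinate. Consequently every geometric construction used in the previous proof---the map $\iota^{\exp}$, the decomposition $\cO^{\infty}(X)=\cO^{odd}(X)\cup\cO^{ev}(X)$ into $G$-invariant subsets, the big subfamily $\cO^{\infty}_{-}(X)$, the subspaces $\Sq^{odd/ev}(X)$, and the downward shift witnessing flasqueness of $\cO^{\infty}_{\le n}(X)$---is automatically $G$-equivariant. Since $-//G$ leaves the underlying set and bornology unchanged and only enlarges the coarse structure by the $G$-orbit entourages $U_{g}$ from \cref{kokptoerhertgretgrtgrtg}, every such equivariant construction descends to an analogous construction on the $-//G$-quotient, and flasqueness descends as well.

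First I would define the descended map $\iota^{\exp}\colon \Sq(X)//G\to \cO^{\infty}(X)//G$ by the same formula as upstairs and form the descended decomposition $(\cO^{odd}(X)//G,\cO^{ev}(X)//G)$ of $\cO^{\infty}(X)//G$. I would then check that this decomposition remains coarsely excisive and that its intersection is still precisely the image of the descended $\iota^{\exp}$. The only point requiring genuine verification---and the one I expect to be the main (though minor) obstacle---is that coarse excision survives the enlargement of the coarse structure by the $U_{g}$: this reduces to the observation that $U_{g}$-thickenings of a $G$-invariant subset stay inside that subset, so any witness for excision upstairs remains a witness after passing to $-//G$. Similarly, the restrictions $\Sq^{odd/ev}(X)_{\iota^{\exp}}//G\to \cO^{odd/ev}(X)//G$ remain coarse homotopy equivalences by descending the $G$-equivariant coarse homotopy inverses and homotopies from the proof of \cref{lptegtrtgetrgterg}.

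Next I would form the induced push-out square in $\CM$, pass to the quotient by the big family $\cO^{\infty}_{-}(X)//G$, and split the upper-left term via $u$-continuity into $odd/ev$ summands exactly as in \eqref{vsdfsdfvsdfvsdfv}; the justification for this splitting (components $\{n\}\times X$ decouple coarsely for large $n$ against any fixed coarse entourage) is unchanged by $-//G$, because the additional $U_{g}$-entourages act trivially on the cone coordinate. Since $\cO^{\infty}_{\le n}(X)//G$ inherits flasqueness from the $G$-equivariant downward $\Z$-shift, we still have $\Yo(\cO^{\infty}_{-}(X)//G)\simeq 0$, and hence $\Yo(\cO^{\infty}(X)//G)\simeq \Yo(\cO^{\infty}(X)//G,\cO^{\infty}_{-}(X)//G)$. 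Putting everything together yields the fibre sequence
$$\Yo(\Sq(X)//G,\Sq_{-}(X)//G)\xrightarrow{1+t} \Yo(\Sq(X)//G,\Sq_{-}(X)//G)\to \Yo(\cO^{\infty}(X)//G)\ ,$$
in which the identification of the diagonal entries of $F$ with the identity and the off-diagonal entries with the shift $t$ is verbatim from the proof of \cref{lptegtrtgetrgterg}. This is the standard presentation of $\colim_{B\Z}$ of the sign-twisted action, which delivers the equivalence \eqref{fwefwefweerwf}; naturality in $X$ is inherited step by step from the naturality of the corresponding argument without $G$.
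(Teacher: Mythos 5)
Your proposal is correct and takes exactly the route the paper intends: the paper's own ``proof'' of \cref{lkprhrhertgertgertge} is just the remark that it ``can be shown by the same argument'' as \cref{lptegtrtgetrgterg}, and your write-up is precisely that Mayer--Vietoris argument rerun after $-//G$, with the right observations (the extra entourages $U_{g}$ fix the cone coordinate, so excisiveness, the $u$-continuity splitting, the coarse homotopy equivalences and the flasqueness of $\cO^{\infty}_{\le n}(X)//G$ all descend). No gaps to report.
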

 
 In this note we will use the fibre sequence
 \begin{eqnarray}  \lefteqn{
 \Sigma^{-1}\Yo(\cO^{\infty}(X)//G)\stackrel{\delta}{\to}   \Yo(\Sq(X)//G,\Sq_{-}(X)//G)}\hspace{2cm}&& \nonumber \\&& \stackrel{1+t}{\to}
  \Yo(\Sq(X)//G,\Sq_{-}(X)//G)\stackrel{\Yoctr( \iota^{\exp}) }{\to}\Yo(\cO^{\infty}(X)//G)\ .\label{rewgerfewrfwe}
\end{eqnarray}  

The geometric cone boundary \eqref{erthertgrt} together with the usual equivalence $$\Yo(\Z_{\can,\min}\otimes -)\simeq \Sigma \Yo(-):\BC\to \CM$$  provide the cone boundary; a natural transformation
\begin{equation}\label{grewfwerfwef}
\partial^{\cone}:\Yo(\cO^{\infty}(-))\to \Sigma \Yo(c(-)):\UBC\to \CM\ ,
\end{equation}
where $c$ is as in \eqref{fwerfvsdv}.
We use the same notation for the analogously defined transformation 
$$\partial^{\cone}:\Yo(\cO^{\infty}(-)//G)\to \Sigma \Yo(c(-)//G):\Fun(BG,\UBC)\to \CM\ .$$


\section{Branched coarse coverings}\label{joopwbergregrfwrf}
 
For a $G$-set $X$   we consider the $G$-set
$G\times X$ with the diagonal $G$-action (involving  left-multiplication on $G$) and the additional $G$-action by right-multiplication on $G$. We consider the map
$$f:G\times X\to X\ , \quad f(g,x):=g^{-1}x\ .$$
It intertwines the additional $G$-action with the given $G$-action on $X$, and the diagonal $G$-action with the trivial $G$-action on $X$.

For   an entourage $W$    of $X$ we define the  entourage 
$$W_{G}:=f^{-1}(W)\cap \pr_{G}^{-1}(\diag(G)) $$
%
of  $G\times X$
which is invariant under the diagonal $G$-action and satisfies 
 satisfies $f(W_{G})=W$. 
 If $X$ has a  coarse structure, then we equip $G\times X$ with the  coarse structure 
 generated by the entourages $W_{G}$ for all coarse  entourages $W$ of $X$.
  If the coarse structure  on $X$ is preserved by the $G$-action, then the additional $G$-action preserves
  the coarse structure on $G\times X$. 

If $X$ has a bornology, then we equip $G\times X$ with the bornology generated by
the subsets $F\times B$ for finite subsets $F$ of $G$ and bounded subsets $B$ of $X$.
If the bornology on $X$ is preserved by $G$, then the map $f$ is bornological with locally finite fibres, but not proper in general.
 
If $X$ is a   bornological coarse space with $G$-action, then 
the structures on $G\times X$  defined above  turn  this set into a   $G$-bornological coarse space,  which we will denote by $G\ltimes X$.  
 We thus get a functor
 $$G\ltimes -:\Fun(BG,\BC)\to \Fun(BG,G\BC)\ .$$

\begin{ex}
If $X$ is in $G\BC$, then $f:G\ltimes X\to X$ is a bounded covering in the sense of \cite{trans}.
Indeed, the map $(g,x)\to (g,g^{-1}x)$ induces an isomorphism
$G\ltimes X\cong G_{\min,\min}\otimes X$ under  which $f$ becomes the projection onto the second factor. \hB
\end{ex}

Let $X$ be in $\UBC$.

\begin{ddd}\label{okbepgrbkeprbegfdb}
A uniform scale on $X$ is a  cofinal family $(U_{n})_{n\in \nat}$ in the uniform structure of $X$.
\end{ddd}

Note that the existence of a uniform scale is a non-trivial condition on the uniform structure.

We say that a  second uniform  scale  $(V_{n})_{n\in \nat}$ dominates the first if
$U_{n}\subseteq V_{n}$ for all $n$ in $\nat$.
\begin{ex}\label{kohprethegtgtrge}
If the uniform structure of $X$ comes from a metric, then $X$ admits a uniform scale. One can take  
$(V_{n^{-1}})_{n\in \nat}$, where $V_{r}$ is the metric entourage of width $r$ and we interpret 
$V_{\infty}:=X\times X$.

Every uniform scale $(U_{n})_{n\in \nat}$ of $X$ is dominated by a uniform scale of the form
$(V_{r(n)})_{n\in \nat}$, where $r:\nat\to (0,\infty]$ is a function such that $\lim_{n\to \infty} r(n)=0$.
We can take $r(n):=\inf\{r\in (0,\infty]\mid U_{n}\subseteq V_{r}\}+1$.
\hB
\end{ex}

 Let $X$ be in $\Fun(BG,\UBC)$. 
 \begin{ddd}\label{kohpertgetrgeg1}
 A uniform scale $(V_{n})_{n\in \nat}$ is called a Lipschitz scale for the $G$-action  if for every $g$ in $G$ there exists  $k$ in $\nat$ (the Lipschitz constant of $g$) such that for every $n$ and $l$ in $\nat$  we have $gV^{l}_{n}\subseteq V_{n}^{lk}$.
 \end{ddd}

    \begin{ddd}\label{kohpertgetrgeg}
   The action of $G$ on $X$ is called Lipschitz if 
   every uniform scale   on $X$ is dominated by a Lipschitz scale. 
    
   \end{ddd}
   

\begin{ex}
If the uniform structure of $X$ comes from a metric   and $G$ acts by Lipschitz maps,
then the action of $G$ on $X$ is Lipschitz.  In order to construct a Lipschitz scale 
dominating a given uniform scale,   use the second part of \cref{kohprethegtgtrge}. \hB
\end{ex}

Recall from \cref{koppgwerefwerfwerf} that the coarse structure of $\Sq(X)$  contains a cofinal set of entourages of the form
$V=(\diag(\Z)\times U)\cap W$ (see \eqref{rgesgerg})  for a coarse entourage $U$ of $X$  that is also uniform,  and
an entourage $W$ of $\Z\times X$ such  that  every given  uniform entourage of $X$
 contains the  $n$-th component $W_{n}$ 
provided $n$ is sufficiently large. 
Recall that $\Sq(X)_{V}$ denotes the bornological coarse space obtained from $\Sq(X)$ by  
replacing the original coarse structure by the one generated solely by the single entourage $V$.
While   $G$ acts on $\Sq(X)$ by the functoriality of the squeezing space functor,
we need additional conditions to ensure that $G$ also acts on the bornological coarse spaces  $\Sq(X)_{V}$ for a cofinal set of entourages $V$.
\begin{ass}\label{hepretgetg} \mbox{}\begin{enumerate}
 \item \label{gwpokrgwrfwr}
The uniform structure of $X$ admits a  uniform scale.\item\label{gwpokrgwrfwr0}
We   assume that $X$ has an  underlying $G$-bornological coarse space.
\item \label{gwpokrgwrfwr11} We assume that the $G$-action on $X$ is Lipschitz.\end{enumerate}
 \end{ass}
By \cref{hepretgetg}.\ref{gwpokrgwrfwr}
we can assume after enlarging $V$ that $(V_{n})_{n\in \nat}$ is a uniform scale, and that $V_{n}=U$ for $n\le 0$ for some coarse entourage $U$   of $X$ as in \eqref{rgesgerg} that is also uniform. Let us call such an entourage $V$ uniformly wide.
 By \cref{hepretgetg}.\ref{gwpokrgwrfwr0} we can assume further that $U$ is $G$-invariant.  In this case by \cref{hepretgetg}.\ref{gwpokrgwrfwr11} we can enlarge $V$ further such that  $gV^{l}\subseteq V^{kl}$ for every $l$ in $\nat$, where $k$
 is the Lipschitz constant of $g$ and does not depend on $l$.

 \begin{ddd}\label{herjtogitrjgoetrgrtegetrg} An entourage  $V$ of $\Sq(X)$
is called $G$-adapted   if it is symmetric,  uniformly wide,  and for every $ g$ in $G$ there exists $k$ in $\nat$ such that for every $l$ in $\nat$ we have   $gV^{l}\subseteq V^{lk}$.
 \end{ddd}
   We get the following assertion:
  \begin{kor}\label{jophrthetgetgetg}
Under \cref{hepretgetg} the bornological coarse space $\Sq(X)$ admits  a cofinal set of $G$-adapted  coarse entourages of $\Sq(X)$.    \end{kor}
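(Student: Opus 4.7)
The plan is to take an arbitrary coarse entourage $V_0$ of $\Sq(X)$ and enlarge it in three stages, matching the three parts of \cref{hepretgetg}, into a $G$-adapted entourage $V \supseteq V_0$. This essentially formalizes the discussion immediately preceding \cref{herjtogitrjgoetrgrtegetrg}.

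By the description \eqref{rgesgerg} of the coarse structure on $\Sq(X)$, I may assume $V_0 = (\diag(\Z) \times U_0) \cap W_0$ for some coarse entourage $U_0$ of $X$ and some entourage $W_0$ of $\Z \times X$ whose slices $W_{0,n}$ eventually lie inside every uniform entourage of $X$. First, using \cref{hepretgetg}.\ref{gwpokrgwrfwr}, I would fix a uniform scale $(V^{\mathrm{unif}}_n)_{n \in \nat}$ on $X$ and enlarge to an entourage of the form $V_1 = (\diag(\Z) \times U_1) \cap W_1$ in which $U_1 \supseteq U_0$ is a coarse entourage that is simultaneously a uniform entourage, $W_{1,n} = V^{\mathrm{unif}}_n$ for $n \ge 1$, and $W_{1,n} \supseteq U_1$ for $n \le 0$, symmetrizing as needed. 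This makes $V_1$ uniformly wide in the sense of the discussion preceding \cref{herjtogitrjgoetrgrtegetrg}.

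Second, using \cref{hepretgetg}.\ref{gwpokrgwrfwr0}, I would enlarge $U_1$ to a $G$-invariant coarse entourage $U$ that is still uniform. This uses both that the $G$-bornological coarse structure provides cofinally many $G$-invariant coarse entourages and that the $G$-action preserves the uniform structure, since $G$ acts by uniform maps in $\UBC$. Third, using \cref{hepretgetg}.\ref{gwpokrgwrfwr11}, the uniform scale fixed above is dominated by a Lipschitz scale $(V^{\mathrm{Lip}}_n)_{n \in \nat}$ for the $G$-action. Replacing the positive slices of $W_1$ by this Lipschitz scale and padding the non-positive slices by $U$ produces the final entourage $V = (\diag(\Z) \times U) \cap W$, which I again symmetrize.

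To check that $V$ is $G$-adapted I would argue slicewise. Since $G$ acts only on the $X$-factor of $\Sq(X)$, compositions and left $g$-translation respect the $\Z$-decomposition, giving $(V^l)_n = (V_n)^l$ and $(gV^l)_n = g(V_n)^l$. For $n \le 0$ we have $V_n = U$ and $G$-invariance of $U$ yields $g(V_n)^l = (V_n)^l$; for $n \ge 1$, \cref{kohpertgetrgeg1} supplies a constant $k = k(g)$ with $g(V^{\mathrm{Lip}}_n)^l \subseteq (V^{\mathrm{Lip}}_n)^{lk}$ uniformly in $n$ and $l$. Combining these gives $gV^l \subseteq V^{lk}$, so $V$ is $G$-adapted, and cofinality is automatic since $V \supseteq V_0$. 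The main technical obstacle is the second step: the enlargement of $U_1$ must simultaneously achieve $G$-invariance and remain a uniform entourage, and the compatibility of the two relevant cofinality properties is precisely what the $\UBC$-equivariance of the $G$-action provides.
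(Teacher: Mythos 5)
Your proposal is correct and follows essentially the same route as the paper: there the corollary is simply the summary of the three-step enlargement discussion preceding \cref{herjtogitrjgoetrgrtegetrg} (make $V$ uniformly wide using the uniform scale, make $U$ $G$-invariant using the underlying $G$-bornological coarse structure, then enlarge the slices by a dominating Lipschitz scale), which is exactly what you carry out, with the slicewise verification of $gV^{l}\subseteq V^{lk}$ as welcome added detail. Just note, matching the paper's ``after enlarging $V$'' phrasing, that you should take unions of the old slices with (a suitably reindexed, eventually-small version of) the scale rather than literally replacing them, so that the resulting entourage both contains $V_{0}$ and still has slices eventually contained in every uniform entourage of $X$.
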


  \begin{kor}\label{jophrthetgetgetg1}
  If $V$ is a $G$-adapted coarse entourage of $\Sq(X)$, then $G$ acts on $\Sq(X)_{V}$.
  \end{kor}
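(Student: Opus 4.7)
The plan is to verify directly that for each $g \in G$, the underlying set map $g : \Sq(X) \to \Sq(X)$ restricts to an automorphism of $\Sq(X)_V$ in $\BC$, and then observe that these automorphisms assemble into a group action. Since $\Sq(X)_V$ has the same underlying set and bornology as $\Sq(X)$, only the coarse structure changes, and so it suffices to check that $g$ is controlled and proper for this new (smaller) coarse structure.

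First I would address controlledness, which is the entire point of the definition of $G$-adapted. The coarse structure on $\Sq(X)_V$ is generated by the single entourage $V$, so every coarse entourage is contained in $V^l$ for some $l \in \nat$. Applying $g$ diagonally, we get $(g \times g)(V^l) = gV^l \subseteq V^{lk}$ by \cref{herjtogitrjgoetrgrtegetrg}, where $k$ depends only on $g$. Thus $g$ sends coarse entourages to coarse entourages and is controlled as a self-map of $\Sq(X)_V$.

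Next, for the bornological and properness conditions, I would invoke \cref{hepretgetg}.\ref{gwpokrgwrfwr0}: since $X$ has an underlying $G$-bornological coarse space, the action of $G$ on $X$ is by bornological automorphisms, and hence so is the induced action on $\Sq(X) = \Z \times X$ (the bornology is generated by products $F \times B$ with $F \subseteq \Z$ finite and $B \subseteq X$ bounded, and $g$ preserves this since it acts as $(n,x) \mapsto (n, gx)$). Since the bornology of $\Sq(X)_V$ agrees with that of $\Sq(X)$, we conclude that $g$ is bornological, and applying the same argument to $g^{-1}$ (which is also bornological) shows $g$ is proper.

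Finally, the condition in \cref{herjtogitrjgoetrgrtegetrg} is stated for every $g \in G$, so the same reasoning applies to $g^{-1}$ and shows it is controlled on $\Sq(X)_V$ with its own constant. Thus each $g$ is an automorphism in $\BC$, and since the assignment $g \mapsto g$ already defines a $G$-action on the underlying set (functorial by construction of $\Sq$), this gives the desired $G$-action on $\Sq(X)_V$. The main (and essentially only) obstacle is the controlledness step, which is resolved precisely by the polynomial-entourage-growth condition built into the definition of $G$-adapted; the other conditions are either inherited from $\Sq(X)$ or immediate from \cref{hepretgetg}.\ref{gwpokrgwrfwr0}.
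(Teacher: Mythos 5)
Your proof is correct and is essentially the verification the paper leaves implicit: the corollary carries no proof precisely because controlledness is immediate from the defining condition $gV^{l}\subseteq V^{lk}$ of \cref{herjtogitrjgoetrgrtegetrg}, while the bornology of $\Sq(X)_{V}$ coincides with that of $\Sq(X)$ and is preserved by the action. (Your appeal to \cref{hepretgetg}.\ref{gwpokrgwrfwr0} for the bornological/properness step is harmless but not strictly needed, since $X$ lying in $\Fun(BG,\UBC)$ already makes each $g$ a proper, bornological automorphism of the underlying bornological space.)
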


    \begin{ddd}\label{jkrtpzrtzhjrthrtzhrth}
 We say that the action is uniformly free if for every  $g$ in $G$ with $g\not=e$ there exists a uniform entourage  $U$ of $X$ with $U\cap \{(gx,x)\mid x\in X\}=\emptyset$. 
   \end{ddd}
   
   \begin{ex}
If $X$ is compact and the action of $G$ is free in the set-theoretic sense, then it is uniformly free.
\hB
\end{ex}

We recall   the notion of a branched coarse $G$-covering \cite[Def. 2.1]{Bunke:2025aa}.
Let $X$ be a $G$-bornological coarse space, let $Y$ be a bornological coarse space equipped with a big family, and let    $f:X\to Y$ be a $G$-equivariant  map between the underlying sets, where $Y$ is equipped with the trivial $G$-action.
Furthermore, let  $\cZ$ be a big family in $Y$.
\begin{ddd}\label{jijhitrojhortrgrtgertgeg}The pair 
 $(f:X\to Y,\cZ)$ is a branched coarse $G$-covering with respect to $\cZ$ if $f$ is controlled, bornological, has locally finite fibres,
 and if there exists a $G$-invariant entourage $P$ on $X$ (the connection)
 with the following properties:
 \begin{enumerate}
 \item\label{kohpezhrtgergeg} For every coarse entourage $V$ of $Y$ there exists a member $Z$ in $\cZ$
 such that for every $(y',y)\in V\cap (Y\times (Y\setminus Z))$ and $x$ in $f^{-1}(y)$ there exists a unique $x'$ in $f^{-1}(y')
$
such that $(x',x)\in P$, and the subset
$P\circ (f\times f)^{-1} (V\cap (Y\times (Y\setminus Z)))$ is a coarse entourage of $X$.
\item \label{kohpezhrtgergeg1} For every coarse entourage $U$ of $X$ there exists a member $Z$ in $\cZ$ such that $U\cap (X\times f^{-1}(Y\setminus Z))\subseteq P$.
 \end{enumerate}
     \end{ddd}

The following is an analogue of \cite[Prop. 3.10]{Sawicki_2020}. 

Let $X$ be in $\Fun(BG,\UBC)$. 
\begin{prop} \label{uoiwgrgwegwerfwfwerf} We assume:
\begin{enumerate}
\item  $G$ is countable.
\item  The $G$-action on $X$ is uniformly free. 
\end{enumerate}
If $V$ is a $G$-adapted coarse entourage of $\Sq(X)$, then 
the map
 $$f:(G\ltimes \Sq(X)_{ V}) //G\to  \Sq(X)_{  V}//G\ , \quad (g,n,x)\mapsto (n,g^{-1}x) $$ is a branched
 coarse $G$-covering with respect to the big family $ ((-\infty,n]\times X)_{n\in \nat}$.
\end{prop}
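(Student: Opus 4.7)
The plan is to verify the axioms for a branched coarse $G$-covering from \cite[Def.~2.1]{Bunke:2025aa}, which amount to (i) identifying a free $G$-action on the source that is intertwined by $f$ with the trivial action on the target, (ii) establishing the bounded covering property, and (iii) showing that, outside a member of the distinguished big family, the $G$-action on the source is ``coarsely free'' (i.e.\ $V$-neighborhoods and their $g$-translates become disjoint for $g\neq e$). Steps (i) and (ii) are essentially formal from \cref{jophrthetgetgetg,jophrthetgetgetg1}; step (iii) is the crux and relies on uniform freeness together with the defining property of the squeezing space.

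First I would set up the identifications. The $G$-set $G\ltimes \Sq(X)$ carries two commuting $G$-actions: the diagonal action (used to form the quotient) and the additional right-multiplication on the $G$-factor, which descends to a free residual $G$-action on $(G\ltimes \Sq(X)_{V})//G$. The map $(g,n,x)\mapsto (n,g^{-1}x)$ intertwines the diagonal action with the trivial action on $\Sq(X)$ and the additional action with the given $G$-action on $\Sq(X)$. Passing to the diagonal quotient gives a set-bijection $(G\ltimes \Sq(X)_{V})//G\cong \Sq(X)_{V}$ under which $f$ becomes the canonical projection $\Sq(X)_{V}\to \Sq(X)_{V}//G$ and the residual $G$-action is the original one. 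The $G$-adapted condition on $V$ (\cref{herjtogitrjgoetrgrtegetrg}) ensures that this identification respects coarse and bornological structures, and uniform freeness immediately upgrades set-theoretic freeness of this residual action. The bounded covering property then follows exactly as for the example $G\ltimes Y\cong G_{\min,\min}\otimes Y$ recorded earlier, since $f$ becomes the $G$-quotient map of a space on which $G$ acts freely.

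The main step is (iii). Given a coarse entourage $E$ of $(G\ltimes \Sq(X)_{V})//G$, generated by compositions of $V_{G}$ and the $G$-translation entourages, I need to produce $n\in \nat$ such that for $k\geq n$ and for every $h\neq e$ the $E$-neighborhood of any point with squeezing component $k$ is disjoint from its $h$-translate. By the $G$-adapted property $hV^{l}\subseteq V^{lk}$, any such iterated entourage $E$ involves only finitely many translation elements $h\in G$, so it suffices to fix $h\neq e$ and find a level beyond which $h\cdot V^{l}$ is diagonally disjoint from itself. Uniform freeness (\cref{jkrtpzrtzhjrthrtzhrth}) supplies a uniform entourage $U_{h}$ with $U_{h}\cap \{(hx,x)\mid x\in X\}=\emptyset$, and the defining property of $\Sq(X)$ (that the $k$-th slice $V_{k}$ is eventually contained in any prescribed uniform entourage as $k\to \infty$) forces $V_{k}\subseteq U_{h}$ for $k$ large. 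Composing with the $G$-adapted control absorbs iterations. Countability of $G$ is then used to enumerate the finite sets of relevant $h$'s across a cofinal system of entourages $E$ and to organize the resulting levels into a single cofinal index $n$ relative to the big family $((-\infty,n]\times X)_{n\in \nat}$.

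The main obstacle is the combinatorial bookkeeping in (iii): one must carefully track how entourages of $(G\ltimes \Sq(X)_{V})//G$ decompose into contributions from $V_{G}$ and from $G$-translations, translate statements there to iterated entourages on $\Sq(X)$, and match the output with the precise formulation of branched coarse $G$-covering from \cite[Def.~2.1]{Bunke:2025aa}. Once the identifications are in place, the remaining content of the proposition is a direct unravelling of uniform freeness against the defining squeezing property, with the $G$-adapted hypothesis (\cref{jophrthetgetgetg}) guaranteeing that all constructions stay within the category $\BC$.
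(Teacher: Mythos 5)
There is a genuine gap. First, a structural slip: the functor $-//G$ from \cref{kokptoerhertgretgrtgrtg} does \emph{not} change underlying sets, so $(G\ltimes \Sq(X)_{V})//G$ has underlying set $G\times\Z\times X$; there is no set bijection with $\Sq(X)_{V}$, and under the isomorphism $(g,n,x)\mapsto(g,n,g^{-1}x)$ the map $f$ becomes the projection $G\times\Z\times X\to\Z\times X$ onto $\Sq(X)_{V}//G$, not the identity on underlying sets. Relatedly, $f$ is \emph{not} a bounded covering "exactly as in the example $G\ltimes Y\cong G_{\min,\min}\otimes Y$": that example requires a $G$-invariant structure on the base, whereas here the base $\Sq(X)_{V}//G$ contains the translation entourages $U_{F}$, and lifting those is precisely where the covering fails near the small components --- this is the whole reason the statement is about a \emph{branched} coarse covering relative to the big family.

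The more serious omission is that \cite[Def.~2.1]{Bunke:2025aa} is formulated in terms of the existence of a \emph{connection}, a fixed family of entourages $P$ on the total space (independent of the scale being lifted) that must simultaneously satisfy: unique path lifting at every scale $U_{F}\circ V^{R}$ outside a member of the big family, the condition that $f^{-1}(U_{F}\circ V^{k})\cap P$ restricted to large components is a coarse entourage, and the condition that $P$ eventually contains every controlled entourage of the total space. Your sketch treats the definition as a scale-by-scale "coarse freeness outside a big-family member" statement and never constructs such a $P$; but the interplay of the three conditions forces $P$ to grow along the squeezing direction, which is why the paper's proof chooses a slowly growing function $k(n)$ with $(V_{n}^{k(n)})_{n}$ still a uniform scale, the sets $B'_{n}$ of group elements separated at scale $V_{n}^{k(n)}$, and (using countability of $G$, which is where it genuinely enters --- not to "enumerate a cofinal system of entourages", which need not be countable) an increasing exhaustion $(B_{n})_{n}$ of $G\setminus\{1\}$ with $B_{n}\subseteq B'_{n}$ and $lB_{n}l^{-1}\subseteq B'_{n}$ for large $n$, setting $P_{n}=(U_{B_{n}}\times\diag(X))\circ(V_{n}^{k(n)})_{G}$. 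The conjugation condition is not decorative: in the uniqueness argument two candidate lifts differ by $b\neq e$, but the group element separating their images under the $X$-coordinate is the conjugate $sbs^{-1}$ (since $sbx=(sbs^{-1})sx$), so uniform freeness must be invoked for conjugates of elements of $B_{n}$, which your "fix $h\neq e$ and pass to a fine enough level" step does not capture. Your core intuition --- uniform freeness plus the fact that the slices $V_{n}$ become uniformly small forces asymptotic disjointness of translates --- is indeed the heart of the matter, but without the connection, its growth control, and the conjugation-stable exhaustion, the axioms of \cite[Def.~2.1]{Bunke:2025aa} cannot be verified as stated.
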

\begin{proof}
Since $G$ acts on $\Sq(X)_{V}$ by \cref{jophrthetgetgetg1}, it also acts on $G\ltimes \Sq(X)_{V}$ by functoriality of $G\ltimes -$ so that $(G\ltimes \Sq(X)_{ V}) //G$ is well-defined.
As observed above, the map $f$ is bornological, controlled and has locally finite fibres.

We can fix a sufficiently slowly growing function $k:\Z\to \Z$ such that $\lim_{n\to \infty} k(n)=\infty$ and that
$(V_{n}^{k(n)})_{n\in \Z}$ is still a uniform  scale  on $X$.
For every $n$ we let $B'_{n}$ be the subset of $g$ in $G$ such that the conditions 
$(y,x)\in V_{n}^{k(n)}$ and $(gy,x)\in V_{n}^{k(n)}$  are incompatible for all pairs $x,y$ in $X$.
It follows from uniform freeness and the cofinality of $ (V_{n}^{k(n)})_{n\in \Z}$ that the increasing family 
$(B'_{n})_{n\in \Z}$ exhausts $G\setminus \{1\}$.  Using the countability of $G$ we now choose an increasing  family
$(B_{n})_{n\in \Z}$  still exhausting $G\setminus \{1\}$ such that $B_{n}\subseteq B_{n}'$ for every $n$ in $\Z$ and for every $l$ in $G$ we have
$lB_{n}l^{-1}\subseteq B'_{n}$ for sufficiently large $n$.

Since the components $\{n\}\times X$ of $\Sq(X)$ are coarsely disjoint, we can define the connection \begin{equation}\label{erfwerfwffrfreferf}P:=\bigcup_{n\in \Z} P_{n}
\end{equation} by providing its components.
  For any subset $B$ of $G$
we consider the $G$-invariant entourage \begin{equation}\label{ertkhertgrtgge}
U_{B}:=\{(gb,g)\mid g\in G,  b\in B\} \ .
\end{equation} of $G$. We then set 
$$P_{n}:=   (U_{B_{n}}\times \diag(X))  \circ  (V_{n}^{k(n)})_{G}  
$$ on $G\times X$
for all $n$ in $\nat$.

We now verify  Condition \ref{jijhitrojhortrgrtgertgeg}.\ref{kohpezhrtgergeg}.
For every finite subset $F$ of  $G$, we consider the entourage $U_{F}$   in the coarse structure of $\Sq(X)//G$  defined by  $$U_{F}:=\{((n,s^{-1}x),(n,x))\mid n\in \Z\ , x\in X\ , s\in F\}\ .$$  
 Using  that 
 $V$ is $G$-adapted,  one checks that the family of entourages  $(U_{F}\circ V^{k})_{F\subseteq G,\   k\in \nat}$ is cofinal in the coarse structure of $\Sq(X)_{V}//G$.  

We fix  $R$  in $\nat$ and a finite subset $F$ of $G$. 
We choose $n_{0}$ in $\Z$ such that $R\le k(n)$, $F\subseteq B_{n}$ and $\bigcup_{f\in F}fB_{n}f^{-1}\subseteq B_{n}'$ for all $n > n_{0}$.
We will show that the parallel transport  at scale $U_{F}\circ V^{R}$ is well-defined on
the complement $(n_{0},\infty)\times X$ of
  the member $(-\infty,n_{0}]\times X$ of the big family $ ((-\infty,n]\times X)_{n\in \nat}$.

Assume that    $((n, x),(n,y))\in U_{F}\circ V^{R}$ and $n>n_{0}$.
Then  there exists $s$ in $F$  such that $(sx,y)\in V_{n}^{R}$.
  Let $(l,n,ly)$ be a preimage under $f$ of $(n,y)$. 
Then $(ls,n,lsx)$ is a preimage of $(n,x)$ and
$((ls,n,lsx),(l,n,ly))\in P$. A different preimage of $(n,x)$   could be written
as $(lsb,n,lsbx)$ with $b\not=e$.  If $((lsb,n,lsbx), (l,n,ly))\in P_{n} $,  then  $sb\in B_{n}$,   $(sbx,y)\in V_{n}^{k(n)}$ and $(sx,y)\in V_{n}^{k(n)}$. Since $sbx=(sbs^{-1} )sx$ and $sbs^{-1}\in B_{n}'$ this is impossible.
This finishes the verification of  unique path-lifting on $ (n_{0},\infty)\times X$ at scale $U_{F}\circ V^{R}$.
 
 Note that $P_{\ge n}$ itself its not a coarse entourage of $(G\ltimes \Sq(X)_{\ge n})//G$ since $n\mapsto k(n)$ is unbounded.
  But  the above calculations show that
 $(f^{-1}(U_{F}\circ V^{k}) \cap P)_{ \ge n} \subseteq \bigcup_{n} (U_{B_{n}}\times \diag(X))\circ (V_{n}^{k} )_{G}$
 for large $n$ which is a coarse entourage of   $(G\ltimes \Sq(X)_{\ge n})//G$.
 This  finishes the verification of  Condition \ref{jijhitrojhortrgrtgertgeg}.\ref{kohpezhrtgergeg}.

 For every $i$ in $\nat$ 
  and every  finite subset $F$ of $G$ we have $((U_{F}\times \diag(X))\circ (V^{i})_{G})_{n}\subseteq P_{n}$ provided
  $n$ is so large that $F\subseteq B_{n}$ and $k(n)\ge i$. This verifies the Condition  \ref{jijhitrojhortrgrtgertgeg}.\ref{kohpezhrtgergeg1}. 
\end{proof}
A similar  proof shows:

 \begin{prop}\label{jiohnergvfvfdgvbdfbdfgb}We assume:
\begin{enumerate}
\item  $G$ is countable.
\item  The $G$-action on $X$ is uniformly free. 
\item \cref{hepretgetg}
\end{enumerate}

Then there exists a cofinal set of entourages $V$ of $\cO^{\infty}(X)$ such that $G$ acts on $\cO^{\infty}(X)_{V}$ and 
 the map
$$f:(G\ltimes \cO^{\infty}(X)_{V})//G\to \cO^{\infty}(X)_{V}//G$$
is a branched coarse $G$-covering with respect to the big family  $   ((-\infty,n]\times X)_{n\in \nat}$. \end{prop}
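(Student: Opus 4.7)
The plan is to follow the proof of \cref{uoiwgrgwegwerfwfwerf} essentially verbatim, with adjustments for the extra $\Z$-shift entourage built into $\cO^{\infty}(X)=\Sq(X)//\Z$. Write
\[
T:=\{((n+1,x),(n,x))\mid n\in\Z,\ x\in X\}
\]
for this shift entourage. Since the $\Z$-shift acts trivially in the $X$-coordinate and the $G$-action acts trivially in the $\Z$-coordinate, $T$ is $G$-invariant. Starting from a $G$-adapted entourage $V$ of $\Sq(X)$ (provided by \cref{jophrthetgetgetg}), I would consider the coarse entourage $W$ of $\cO^{\infty}(X)$ generated by $V\cup T$. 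For any $g\in G$ and $l\in\nat$, a length-$l$ word in $V\cup T$ acted on by $g$ becomes a word in $gV\cup T$, which by $G$-adaptedness of $V$ sits inside $W^{lk}$ where $k$ is the Lipschitz constant of $g$. Hence $W$ is $G$-adapted in the obvious analogue of \cref{herjtogitrjgoetrgrtegetrg}, so $G$ acts on $\cO^{\infty}(X)_W$, and such $W$ form a cofinal family.

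For the connection on $(G\ltimes\cO^{\infty}(X)_W)//G$, I lift $T$ to
\[
T_G:=\{((g,n+1,x),(g,n,x))\mid g\in G,\ n\in\Z,\ x\in X\}
\]
on $G\ltimes\Sq(X)$. This is invariant under the diagonal $G$-action and satisfies $f(T_G)=T$. I would then take the connection $P'$ generated under composition and union by the connection $P=\bigcup_n P_n$ from \eqref{erfwerfwffrfreferf} together with $T_G$. The conditions \cite[Def.~2.1 (1) \& (2)]{Bunke:2025aa} are immediate from the corresponding facts for $P$ and the relation $f(T_G)=T\subseteq W$.

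For \cite[Def.~2.1 (3)]{Bunke:2025aa}, a cofinal family of coarse entourages of $\cO^{\infty}(X)_W//G$ has the form $U_F\circ W^R$ for finite $F\subseteq G$ and $R\in\nat$, where a word in $W^R$ shifts the $\Z$-coordinate by at most $R$. Choose $n_0$ so that $R\le k(n)$, $F\subseteq B_n$ and $\bigcup_{s\in F}sB_ns^{-1}\subseteq B_n'$ for all $n>n_0+R$. Then on the complement $(n_0+R,\infty)\times X$ of a member of the big family, the unique-path-lifting argument of \cref{uoiwgrgwegwerfwfwerf} applies: each $T$-step lifts canonically to a $T_G$-step (which fixes the $G$-coordinate and hence introduces no ambiguity), and each $V$-step is handled exactly as before, the possible second lift producing the forbidden relation $sbs^{-1}\in B_n'$.

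The main obstacle is the bookkeeping: each of the up to $R$ many $T$-steps in a word from $W^R$ shifts the $\Z$-coordinate by one, so the $V_n^{k(n)}$-conditions used in the uniqueness argument are evaluated at levels ranging over an interval of width $R$ around the endpoint rather than at a single fixed level. Since $k(n)\to\infty$ and $R$ is fixed, enlarging $n_0$ suitably guarantees that the relevant $V_n^{k(n)}$-condition holds uniformly throughout this interval, and the same $b\in B_n$ versus $sbs^{-1}\in B_n'$ dichotomy then completes the argument, as well as the verification of \cite[Def.~2.1 (3)a(ii)]{Bunke:2025aa} by the same coarse-entourage estimate as in \cref{uoiwgrgwegwerfwfwerf}.
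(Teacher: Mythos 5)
Your overall strategy is the one the paper intends: the paper's own proof of this proposition is literally ``a similar proof shows'', i.e.\ rerun the argument of \cref{uoiwgrgwegwerfwfwerf} with the shift entourage $T$ adjoined, and most of your adjustments are exactly right ($G$-invariance of $T$, hence $G$-adaptedness of $W=\langle V\cup T\rangle$ in the sense of \cref{herjtogitrjgoetrgrtegetrg} and cofinality of the entourages $U_F\circ W^R$; the lift $T_G$; the level-window bookkeeping absorbed by $k(n)\to\infty$). The one step that does not work as written is your definition of the connection: you take $P'$ to be generated by $P\cup T_G$ under composition and union. Closure under composition puts $P_n\circ P_n$ and all higher powers into $P'$, i.e.\ pairs whose group displacement is a product of several elements of $B_n$ and whose spatial displacement is only controlled by a strictly larger power of $V_n$ than $k(n)$ (after conjugating, Lipschitz constants enter as well). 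The uniqueness half of the parallel-transport condition in \cite[Def.~2.1(3)]{Bunke:2025aa} quantifies over \emph{all} preimages of $x$ that are $P'$-related to the chosen lift of $y$, and the dichotomy you invoke ($sb\in B_n$ versus $sbs^{-1}\in B_n'$) is calibrated to a single $P_n$-step: $B_n'$ is defined by incompatibility at exponent exactly $k(n)$, so it does not exclude a second preimage related to the lift of $y$ through $P_n^{2}$ or longer chains. The same overshoot also makes the verification of \cite[Def.~2.1(3)a(ii)]{Bunke:2025aa} problematic, since $f^{-1}(U_F\circ W^R)\cap P'$ now contains pairs built from arbitrarily long $P$-chains.

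The repair is what your step-by-step lifting description implicitly uses anyway: allow exactly one $P$-factor, composed with powers of $T_G$, e.g.\ $P':=\bigcup_{m\in\Z}T_G^{m}\circ P$ (equivalently, define the components of $P'$ as $T_G$-translates of the $P_n$ from \eqref{erfwerfvsdv}-style notation, i.e.\ from \eqref{erfwerfwffrfreferf}). Since $T_G$ fixes both the $G$- and the $X$-coordinate, projecting out the $T_G$-part of such a pair reduces existence and uniqueness of the lift to the single-level computation of \cref{uoiwgrgwegwerfwfwerf}; the only genuinely new point is that for a downstairs pair in $U_F\circ W^R$ spanning levels $m,n>n_0$ the spatial displacement is controlled by $V_{\min(m,n)}^{R}\subseteq V_{\min(m,n)}^{k(\min(m,n))}$ after arranging the scale to be monotone and $n_0$ so large that $R\le k(j)$ for all $j>n_0$ --- this is your ``enlarge $n_0$'' remark made precise. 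With this modification of the connection your argument goes through and coincides with the intended one.
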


 \section{Motivic calculations using transfers}\label{kopggwregweg}
 
 The notion of a coarse homology theory with transfers was introduced in \cite{trans}.
 Many interesting coarse homology theories, in particular those of $K$-theoretic nature, extend to coarse homology theories with transfers; see \cite{trans} and \cite{unik} for algebraic  coarse  $K$-homology theories, and \cite{coarsek} for topological coarse $K$-homology.  
 
 The domain of a  coarse homology theory with transfers   is 
  the two-category  $\BC_{\tr}$, whose objects are bornological coarse spaces,  whose one-morphisms are spans
 $$\xymatrix{&W\ar[dl]_{g}\ar[dr]^{f}&\\X\ar@{..>}[rr]&&Y}$$
 where $f$ is a morphism in $\BC$ that is in addition bornological and $g$ is a bounded covering, and whose 
 two-isomorphisms are isomorphisms of such spans.
 We have a canonical functor
 $$\iota:\BC\to \BC_{\tr}\ , \quad (f:X\to Y)\mapsto \xymatrix{&X\ar@{=}[dl]\ar[dr]^{f}&\\X\ar@{..>}[rr]&&Y}\ .$$
 By definition, a  coarse homology theory with transfers
 is
 functor   $E:\BC_{\tr}\to \cC$ to a cocomplete stable $\infty$-category such that  $E\circ \iota:\BC\to \cC$ is a coarse homology theory.

If
$f:X\to Y$ is a bounded covering, then we write $f^{*}:Y\to X $ in $\BC_{\tr}$ for the span 
 $$\xymatrix{&X\ar[dl]_{f}\ar@{=}[dr]&\\Y\ar@{..>}[rr]^{f^{*}}&&X}\ .$$
 The span
 $$\xymatrix{&X\ar[dl]_{\incl}\ar@{=}[dr]&\\X\sqcup Y\ar@{..>}[rr]^{\pr_{X}}&&X}$$
 represents the projection onto the component $X$ of the coproduct. The projections $\pr_{X}$ and $\pr_{Y}$ together 
witness that  the coproduct  is also the product. 
 The category $\BC_{\tr}$ is therefore semi-additive and  hence enriched in commutative monoids. We add spans by forming the disjoint union of their middle entries.

For formal reasons, we have a universal coarse homology theory with transfers \eqref{vsdfcsdvdfvsdfver}.
  
  \begin{ex}
  Let $X$ be in $\Fun(BG,\UBC$). For the moment we forget the $\Z$-action and consider $\Sq(X)//G$ as an object of $\BC$.
Using excision we get for every $n$ in $\Z$ a  projection map
$$\pr_{n}:\Yo(\Sq(X)//G)\simeq  \Yo(\Sq_{\not=n}(X)//G) \oplus \Yo( \Sq_{n}(X)//G)\stackrel{\pr}{\to}  \Yo( X//G
)$$ in $\CM$, where we implicitly used the isomorphism $\Sq_{n}(X)//G  \cong X//G$.
In the context with transfers this projection onto the $n$th component already exists in $\BC_{\tr}$ and is represented by the span
 $$\xymatrix{&X//G\ar[dl]_{x\mapsto (n,x)}\ar@{=}[dr]&\\\Sq(X)//G\ar@{..>}[rr]^{\pr_{n}}&&X//G}\ .$$
  \hB
  \end{ex}

 \begin{ex}\label{oijoegwerferfwref}
We use transfers in order to   construct 
 a map
 \begin{equation}\label{rferffwe} \phi:\Yoctr(X_{\disc}//G)\to \lim_{B\Z}  \Yoctr(\Sq(X)//G)\ .
\end{equation}
We consider $X$ in $\Fun(BG,\UBC$) and let 
  $X_{\disc}$ denote $X$ with the discrete coarse structure. The    identity of underlying sets   is a morphism
$$\kappa:\Z_{\min,\min}\otimes X_{\disc}//G\to \Sq(X)//G\ .$$
In $\BC_{\tr}$ we have a span
$$\xymatrix{&\Z_{\min,\min}\otimes X_{\disc}//G\ar[dl]_{\pr_{X//G}}\ar[dr]^{\kappa}&\\X_{\disc}//G \ar@{..>}[rr]^{\phi} &&\Sq(X)//G}\ .$$
The isomorphism of spans
 $$\xymatrix{&\Z_{\min,\min}\otimes X_{\disc}//G\ar[dl]_{\pr_{X//G}}\ar[dd]_{\cong}^{(n,x)\mapsto (n+1,x)}\ar[dr]^{t\circ \kappa}&\\X_{\disc}//G \ar@{..>}@/^0.5cm/[rr]^(0.35){t\circ \phi}\ar@{..>}@/^-0.5cm/[rr]_(0.35){\phi} &&\Sq(X)//G \\&\ar[ul]^{\pr_{X//G}}\Z_{\min,\min}\otimes X_{\disc}//G\ar[ur]_{\kappa}&}$$
 witnesses an isomorphism between
 $t\circ \phi$ and $\phi$.
 We have therefore constructed a $\Z$-equivariant refinement  \eqref{rferffwe} also denoted by $\phi$. 
 
  Since a limit over $B\Z$ is a finite limit, and we work with stable $\infty$-categories, it is preserved by 
   coarse homology theories $E$ with transfers considered as colimit-preserving functors $E:\CMctr \to \cC$. We therefore have
 a transfer map  $$E(\phi):E(X_{\disc}//G)\to \lim_{B\Z} E(\Sq(X)//G)\ .$$
In particular, if $\cC=\Sp$ and we have a class $e$ in $\pi_{
*}E(X_{\disc}//G)$, then $\phi_{*}e$ in $\pi_{*
}E(\Sq(X)//G)$ is a $\Z$-invariant class; i.e., it satisfies $$t_{*}\phi_{*}(e)=\phi_{*}(e)\ .$$ 
The components of this class are given by the image of $e$ under the canonical map $E(X_{\disc}//G)\to E(X//G)$.
 \hB
 \end{ex}

 Recall that a rig is a set with  the same structures as a (possibly non-commutative) ring, but the axiom of the existence of inverses for the addition is dropped. Typical examples of rigs are $\nat$ or $\End_{\cC}(C)$ for objects $C$ in a semi-additive $1$-category.
 
 The $\Z$-action on the rig $\prod_{\Z}\nat$ by shifts gives rise to
 a crossed product rig $\prod_{\Z}\nat\rtimes \Z$. 
 In the following, we construct for every object $X$ in $\Fun(BG,\UBC)$  
  a homomorphism of rigs \begin{equation}\label{gwerojofpwerfwerf}a:\prod_{\Z}\nat\rtimes \Z\to \End_{\ho(\BC_{\tr})}(\Sq(X)//G)\ .
\end{equation} 
  
In $\End_{\BC_{\tr}}(\Sq(X)//G)$ we consider the full symmetric monoidal subcategory \begin{equation}\label{erowgwopergwefwerf}R(X)\subseteq \End_{\BC_{\tr}}(\Sq(X)//G)
\end{equation} generated by 
the spans 
$$\xymatrix{& \Sq(X)//G\ar[dr]^{(n,x)\mapsto (n+k,x)} \ar@{=}[dl] &\\ \Sq(X)//G \ar@{..>}[rr]^{t^{k}}  &&  \Sq(X)/G}\ .$$
for $k$ in $\Z$ (the shifts), and the spans
$$\xymatrix{&\Sq^{\bar k}(X)//G\ar[dr]^{f_{\bar k}}\ar[dl]_{f_{\bar k}}&\\ \Sq(X)//G \ar@{..>}[rr]^{ a(\bar k) }&&  \Sq(X)/G}\ .$$  for 
 $\bar k=(k_{n})_{n\in \Z}$   in  $\prod_{\Z}\nat$.  
Here, the underlying set of  $ \Sq^{\bar k}(X)//G$ is $\bigsqcup_{n\in \Z} \{1,\dots,k_{n}\}\times X$ and 
$f_{\bar k}$ is the canonical projection 
$$\bigsqcup_{n\in \Z} \{1,\dots,k_{n}\}\times X\to \bigsqcup_{n\in \Z} \{*\}\times X\cong \Z\times X\ .$$
The bornological and coarse structures are induced from those of $\Sq(X)//G$ in the canonical way.

One easily checks  that the rig of isomorphism classes on $R(X)$
is isomorphic to $\prod_{\Z}\nat\rtimes \Z$. So \eqref{gwerojofpwerfwerf} is induced by the inclusion  \eqref{erowgwopergwefwerf} on the level of homotopy categories.

\begin{rem}
If $X$ is uniformly and coarsely connected, then 
$R(X)\simeq  \End_{\BC_{\tr}}(\Sq(X)//G)$. \hB
\end{rem} 

Note that
$\End_{\ho(\CMtr)}(\Yoctr(\Sq(X)//G))$ is  a ring.
We therefore get a factorization
\begin{equation} 
\xymatrix{ \prod_{\nat} \nat\rtimes \Z\ar[r]^-{a}\ar[d]  &  \End_{\ho(\BC_{\tr})}(\Sq(X)//G)\ar[d]^{\Yoctr} \\   \prod_{\nat} \Z \rtimes \Z\ar@{..>}[r]^-{a} &\End_{\ho(\CMctr)}(\Yoctr(\Sq(X)//G)) } \ ,
\end{equation} denoted by abuse of notation also by $a$, 
 where the left vertical map is the additive group-completion.

 Assume that $E:\BCtr\to \Sp$ is a coarse homology theory  with transfers and that
 $x$  is a class in $\pi_{*}E(\Sq(X)//G)$. Recall that $t_{*}x:=a(t)_{*}x$ denotes the result of applying the shift to $x$.
 If $\bar k=(k_{n})_{n\in \Z}$ is in $\prod_{\Z}\Z$, then we use the notation
 $a(\bar k)_{*}x:=\bar k\cdot x$.

%
%

\begin{rem}
There is no need to pass to homotopy categories. For every $X$ in $\Fun(BG,\UBC)$ we have an
   equivalence of symmetric monoidal  rig-space $R(X)\simeq \prod_{\Z} \Fin^{\simeq}\rtimes \Z$, see
  \cite{Gepner_2015}. 
Group-completing the right-hand side we get a ring spectrum  $\bA:=(\prod_{\Z} \Fin^{\simeq}\rtimes \Z)^{\grp} $. Then for every object $X$ in $\Fun(BG,\UBC)$ we  have a homomorphism of ring spectra
  $$\bA\to \End_{\CMctr}(\Yoctr(\Sq(X)//G))\ .$$
 Passing to $\pi_{0}$, we get $\pi_{0} \bA\cong \prod_{\Z}\Z\rtimes \Z$ and recover $a$.
 Using the Barratt-Priddy-Quillen equivalence, $(\Fin^{\simeq})^{\grp}\simeq S$ we get a map
 $\bA\to  \prod_{\Z}S\rtimes \Z$ which we expect to be an equivalence. 
We have $ (\prod_{\Z} \Fin^{\simeq})^{\grp}\simeq  \prod_{\Z}S$ by \cite[Prop. 7.2.5]{unik},
but at the moment we have no reference for the fact that group completion commutes with
forming the crossed product.
 \hB
\end{rem}

The homomorphism $a$ induces a ring homomorphism
$$\bar a: \frac{\prod_{\nat}\Z}{\bigoplus_{\nat}\Z}\rtimes \Z \to \End_{\ho(\CMctr)}(\Yoctr( \Sq(X)//G,\Sq_{-}(X)//G))\ .$$

We end this section with an application to the construction of coarse homology classes on the cone $ \cO^{\infty}(X)//G$.
Let $E$ be a spectrum-valued  coarse homology theory with transfers and $x$ be in $\pi_{*}E(\Sq(X)//G,\Sq_{-}(X)//G)$.  Assume that $$t_{*}x= \phi \cdot x$$ for some
$\phi$ in $\frac{\prod_{\nat}\Z}{\bigoplus_{\nat}\Z} $.
Then 
we have
$$(1+t) (\psi \cdot x)=0 \ , \qquad x=(1+t) \kappa \cdot x\ ,$$ where
$\psi$ and $\kappa$ in $\frac{\prod_{\nat}\Z}{\bigoplus_{\nat}\Z}$ solve \begin{equation}\label{iojweorgwoerf}\psi+(t\psi) \cdot \phi=0 \ , \quad (\kappa-\bar 1)+ (t\kappa)\cdot \phi=0
\end{equation}
and $\bar 1$ in $ \frac{\prod_{\nat}\Z}{\bigoplus_{\nat}\Z}$ is the  unit.
The second equation are always solvable recursively.
This implies that  
$\iota^{\exp}_{*} x=0$ for $\iota^{\exp}$ as in \eqref{rewgerfewrfwe}. As a result we see that 
the class $x$ is not useful to produce interesting classes in $\pi_{*}E(\cO^{\infty}(X)//G)$ via $\iota^{\exp}$, see also \cref{hkepetgerhrthergrtg}.
A similar observation has already been made in \cite[Sec. 4.1]{Kitsios:2025aa}.

By solving the  first equality in \eqref{iojweorgwoerf} we can find   new classes
$\psi \cdot x$ in the kernel of $1+t$   and therefore in the image of $$\delta: \pi_{*+1}E( \cO^{\infty}(X)//G)\to
  \pi_{*}E(\Sq(X)//G ,\Sq_{-}(X)//G )$$ with $\delta$ as in 
\eqref{rewgerfewrfwe}.
  Such a  class is non-trivial provided $x$ is so and
  $\psi$ is invertible.  The recursive equation for $\psi$ is
  $\psi_{n}:=- \psi_{n-1} \phi_{n}$ which must hold for sufficiently large $n$. 
   Since $\phi_{n}$ is eventually non-zero, we see that eventually non-zero solutions for $\psi$ exist as well.
  
  If $t_{*}x=x$, then we can take  $\psi=[(\pm 1)]:=[((-1)^{n})_{n\in \nat}]$.
  Again, this possibility of constructing classes on the cone has already been observed in  \cite[Sec 4.2]{Kitsios:2025aa}.
  Note that \cref{oijoegwerferfwref} can be used to produce classes $x$ with this property.

  \begin{ex}\label{hkepetgerhrthergrtg}
  In this example, we recover the motivic analogue of  \cite[Thm. B]{Kitsios:2025aa} for the cone at $\infty$. The proof is essentially a version of 
   \cite[(4.1)]{Kitsios:2025aa}.
   We consider the quotient map
   $$\iota:\Sq(X)//G \to \cO^{\infty}(X)//G$$
   induced by the identity of underlying sets.
   Below we implicitly use the equivalence   $$\Yoctr(\cO^{\infty}(X)//G)\stackrel{\simeq}{\to}\Yoctr(\cO^{\infty}(X)//G, \cO_{-}^{\infty}(X)//G)$$ for the second map in \eqref{erwfwerferwfwf}.
   The map $c$  in \eqref{erwfwerferwfwf} is the canonical map.

   \begin{prop}\label{okheprthertgetrge}
   The map
  \begin{equation}\label{erwfwerferwfwf}
\lim_{B\Z}\Yoctr(\Sq(X)//G,\Sq_{-}(X)//G)\stackrel{c}{\to} \Yoctr(\Sq(X)//G,\Sq_{-}(X)//G)\stackrel{\Yoctr(\iota)}{\to} \Yoctr(\cO ^{\infty}(X)//G)
\end{equation}  is  trivial. \end{prop}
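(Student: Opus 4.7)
The strategy is to exploit the ring action constructed in \cref{kopggwregweg} to write $c$ in the form $(1-t)\circ\beta\circ c$ for a suitable endomorphism $\beta$, and then invoke that $\Yoctr(\iota)$ annihilates the image of $1-t$.

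First I would check that $\Yoctr(\iota)\circ(1-t)\simeq 0$ as a morphism $M\to\Yoctr(\cO^\infty(X)//G)$, where $M:=\Yoctr(\Sq(X)//G,\Sq_-(X)//G)$. Indeed, the pair $(\iota(n,x),\iota(t(n,x)))=((n,x),(n+1,x))$ lies in the shift entourage that is added when passing from $\Sq(X)//G$ to $\cO^\infty(X)//G=\Sq(X)//(G\times\Z)$, so $\iota\circ t$ is close to $\iota$ in $\BC$. Hence $\Yoctr(\iota)\circ t\simeq\Yoctr(\iota)$, and passing to the relative motive gives $\Yoctr(\iota)\circ(1-t)\simeq 0$.

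Next I would invoke the ring action $\bar a:R\rtimes\Z\to\End_{\ho(\CMctr)}(M)$ from \cref{kopggwregweg}, where $R:=\prod_\Z\Z/\bigoplus_\Z\Z$ carries the shift automorphism $\sigma$. Setting $\beta:=[(n)_{n\in\Z}]\in R$ gives $\beta-\sigma(\beta)=\bar 1$ in $R$, since $\beta_n-\beta_{n-1}=1$ for every $n$. Because $c$ is the fibre inclusion of $1-t$, we have $t\circ c=c$ in $\ho(\CMctr)$. Using the crossed-product relation $t\beta=\sigma(\beta)t$, a direct computation in $\ho(\CMctr)$ gives
\[
(1-t)\circ\beta\circ c\;=\;\beta c-\sigma(\beta)(tc)\;=\;\beta c-\sigma(\beta)c\;=\;(\beta-\sigma(\beta))\cdot c\;=\;\bar 1\cdot c\;=\;c.
\]
Combining this with the first step yields $\Yoctr(\iota)\circ c\simeq\Yoctr(\iota)\circ(1-t)\circ\beta\circ c\simeq 0$.

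The main obstacle is bookkeeping: the entire argument lives in the ordinary category $\ho(\CMctr)$, whose hom-sets are ordinary abelian groups. The crossed-product relation $t\beta=\sigma(\beta)t$ and the unital action of $\bar 1$ are built into the construction of $\bar a$ in \cref{kopggwregweg}, so no further coherence issues arise.
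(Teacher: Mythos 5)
Your argument is correct, and it reaches the conclusion by a genuinely different route than the paper. The paper's proof first establishes, via a coarse homotopy (an exponential collapse as in the Mayer--Vietoris argument of \cref{lptegtrtgetrgterg}), the identification $\Yoctr(\iota)\simeq \Yoctr(\iota^{\exp})\circ \psi$ with $\psi=[(2^{n-1})_{n}]$, then solves $(1+t)\phi=\psi$ in $\frac{\prod_{\Z}\Z}{\bigoplus_{\Z}\Z}$ (using $t\circ c\simeq c$ to turn the crossed-product relation into a relation in the quotient ring) and quotes the vanishing $\Yoctr(\iota^{\exp})\circ(1+t)\simeq 0$ from the fibre sequence \eqref{rewgerfewrfwe}. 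You instead bypass $\iota^{\exp}$ and the fibre sequence entirely: since $\cO^{\infty}(X)//G$ is obtained from $\Sq(X)//G$ by adjoining exactly the shift entourage, $\iota\circ t$ is close to $\iota$, so $\Yoctr(\iota)\circ(1-t)\simeq 0$ directly by coarse invariance; then the element $\beta=[(n)_{n}]$ with $\beta-\sigma(\beta)=\bar 1$, together with the ring action of \cref{kopggwregweg} and $t\circ c\simeq c$, gives $(1-t)\circ\beta\circ c\simeq c$ and hence the vanishing. (Your sign conventions check out: with $t$ the up-shift one has $t\,a(\bar k)\,t^{-1}=a(\sigma\bar k)$ with $(\sigma\bar k)_{n}=k_{n-1}$, and the unsigned $\Z$-action is the one entering the limit, so $t\circ c\simeq c$ is the right relation; the sign-twisted $1+t$ of \eqref{rewgerfewrfwe} never enters your argument.) What each approach buys: yours is more elementary and purely formal, needing only the closeness observation, the transfer ring action and the telescoping element $[(n)_{n}]$; the paper's route is heavier but produces the identity $\Yoctr(\iota)\simeq \Yoctr(\iota^{\exp})\circ\psi$, which is of independent interest since it mirrors the argument of \cite[(4.1)]{Kitsios:2025aa} and is reused for the positive-cone statement in the subsequent remark.
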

   \begin{proof}
One checks, using a coarse homotopy, that $$\Yoctr(\iota)\simeq \Yoctr( \iota^{\exp})\circ  \psi  :\Yoctr(\Sq(X)//G,\Sq_{-}(X)//G)\to\Yoctr(\cO ^{\infty}(X)//G)\ ,$$ where $\psi=[(\psi_{n})_{n\in \Z}]$ in $\frac{\prod_{\nat}\Z}{\bigoplus_{\nat} \Z}$ with $\psi_{n}= 2^{n-1} 
 $ for large $n$. 
  We can solve $(1+t) \phi = \psi$ for $\phi$ in $\frac{\prod_{\nat}\Z}{\bigoplus_{\nat} \Z}$.
  Then $(1+t)\phi c\simeq \psi c$ and therefore 
  $\Yoctr(\iota) c\simeq \Yoctr( \iota^{\exp}) \psi  c\simeq \Yoctr( \iota^{\exp}) (1+t) \phi  c\simeq 0$ by \eqref{rewgerfewrfwe}.
  \end{proof}   
  \end{ex}
  
  \begin{rem}
  The map $\iota$ restricts to a map
   $$\iota_{\ge 0}:\Sq_{\ge 0}(X)//G \to \cO_{\ge 0}^{\infty}(X)//G\ .$$
   Using an analogous argument as above  and the exponential Mayer-Vietoris decomposition 
of $\cO_{\ge 0}^{\infty}(X)//G$ one can show that 
   $$\lim_{B\Z}\Yoctr(\Sq(X)//G )\stackrel{c}{\to} \Yoctr(\Sq(X)//G )\stackrel{\Yoctr(\iota_{\ge 0})\circ \Yoctr(\pr_{\ge 0})}{\to}\Yoctr( \cO_{\ge 0}^{\infty}(X)//G)$$
   is trivial. 
   Applied to the coarse homology theory with transfers $K\cX$
  and the class $[p]$ in $$\lim_{B\Z} K\cX(   \Sq(X)//G,\Sq_{-}(X)//G)$$ for $p$ as in \eqref{whtgwergergws}, we precisely obtain 
     \cite[Thm. B]{Kitsios:2025aa} with the same argument as in    \cite[(4.1)]{Kitsios:2025aa}.
\hB
  \end{rem}

\uli{bis hier}

%
%
%
%

%
%
%
%

\section{The motivic  coarse assembly map}\label{kohperhretgegtre}

Recall the universal homological functors $\Yo$, $\Yo^{\strg}$ and $\Yo\cB$ from \eqref{gjiwejrgoewrijogwerf} and  \eqref{werfeferfsfvfd}.   
Since any strong coarse homology theory is, in particular, a coarse homology theory, we have a unique  colimit-preserving   comparison map $\cp$ fitting into  the commutative triangle
$$\xymatrix{&\BC\ar[dr]^{\Yo^{\strg}}\ar[dl]_{\Yo}&\\ \CM\ar@{..>}[rr]^{\cp}&&\CM^{\strg}}\ .$$
To a  bornological coarse space $X$  with a coarse entourage $U$, we associate the Rips complex $P_{U}(X)$ in $\UBC$, defined as the realization of the simplicial complex whose $n$-simplices are the  $(n+1)$-tuples  $\{x_{0}
,\dots x_{n}\}$  of pairwise distinct  points in $X$ with $(x_{i},x_{j})\in U$ for all pairs $i,j$. 
A point in $P_{U}(X)$ will be considered as a finitely supported probability measure on $X$. We equip $P_{U}(X)$ with the spherical path metric, allowing infinite distances between points in different components.
We consider $P_{U}(X)$ as a uniform bornological coarse space with the metric uniform and coarse structures, and the bornology generated by the subsets $P_{U}(B)$ for all bounded subsets $B$ of $X$.
 By  
 \cite[Sec. 5]{ass} the Rips complex construction provides a functor  \begin{equation}\label{erfwerfrefwf}\BC\ni X\mapsto \colim_{U\in \cC_{X}}\Yo\cB(P_{U}(X))\in \Sp\cB
\end{equation} which turns out to be a coarse homology theory,  and  can therefore equivalently be interpreted as a colimit-preserving functor
$$\bP:\CM\to \Sp\cB\ .$$ Furthermore, the  local homology theory \eqref{sfdpokopsdvdsfvsdfvsdf} can be interpreted as a colimit-preserving functor 
$$\cO^{\infty,\strg}:\Sp\cB \to \CM^{\strg}\ .$$
The compositions $$\cF:\UBC\stackrel{c}{\to}\BC  \stackrel{\Yo}{\to} \CM, \quad \cF^{\strg}:\UBC\stackrel{c}{\to}\BC  \stackrel{\Yo^{\strg}}{\to} \CM^{\strg}$$
are alse local homology theories,  and induce colimit-preserving functors  $$\cF:\Sp\cB\to \CM\ , \quad \cF^{\strg}:\Sp\cB\to \CM^{\strg}\ .$$
Composing the cone boundary from \eqref{grewfwerfwef} with $\cp$ we get the natural transformation $$\partial^{\cone,\strg}:\Sigma^{-1} \cO^{\infty,\strg}\to \cF^{\strg}$$
of colimit-preserving functors from $\Sp\cB$ to $\CM^{\strg}$.

As explained in \cite[Sec. 5]{ass}, the natural maps  $X_{U}\to c(P_{U}(X))$ of bornological coarse spaces given by the inclusion of  $X$ as the zero-skeleton of the Rips complex induce an equivalence
$$d:\id_{\CM}\stackrel{\simeq}{\to} \cF \bP$$
 of endofunctors of $\CM$. Composing with  $\cp$ we get an equivalence 
 $$d^{\strg}:\cp\stackrel{\simeq}{\to} \cF^{\str}\bP$$ of functors from $\CM$ to $\CM^{\strg}$.
We can now state the definition of the motivic coarse assembly map.
\begin{ddd}[{\cite[Def. 9.7]{ass}}] \label{okopgtrgegrtg}The  motivic coarse assembly map is the natural transformation
\begin{equation}\label{euhwreiogevdfs}\mu: \Sigma^{-1}\cO^{\infty,\strg}\bP\stackrel{\partial^{\cone,\strg}}{\to}  \cF^{\strg}\bP\stackrel{d^{\strg,-1}}{\to} \cp
\end{equation}
of functors from $\CM$ to $\CM^{\strg}$.
\end{ddd}

\begin{rem}
Note that the  topological coarse $K$-homology \begin{equation} 
K\cX:\BC\to \Sp
\end{equation} constructed in \cite{buen}, \cite{coarsek} is strong and can therefore be interpreted as a colimit-preserving functor $K\cX:\CM^{\strg}\to \Sp$. 
Applying $K\cX$ to  \eqref{euhwreiogevdfs} and specializing to the motive $X$ in $\CM$
 yields the coarse assembly map    
 \begin{equation}\label{gewrferfwerfrefwref}
\mu_{K\cX,X}:\Sigma^{-1}K\cX(\cO^{\infty,\strg}\bP(X))\to K\cX(X)
\end{equation} for coarse $K$-homology and $X$.
 If $X$ is represented by a bornological coarse space of bounded geometry, then  \eqref{gewrferfwerfrefwref} is a 
  version  of  the classical coarse Baum-Connes assembly map. To this end, in  \cite{ass}, \cite{Bunke:2024ab} we have  identified the domains and   targets on the level of homotopy groups, but there is still no reference for the equality of the maps.
    \hB \end{rem}

In $\BC$ we have the subcategory of coarsely discrete objects and the larger subcategory of bounded geometry objects.  Their images under $\Yo$ or $\Yo^{\strg}$ generate  the localizing subcategories 
$$\CM_{\disc}\subseteq  \CM_{\bgeom} \ , \qquad 
\CM^{\strg}_{\disc}\subseteq  \CM^{\strg}_{\bgeom}\ ,$$ respectively.
We finally define the  localizing subcategory  \begin{equation}\label{bfbsdfbwe}\CM_{\cass}\subseteq \CM
\end{equation}   consisting of objects on which $\mu$ in \eqref{euhwreiogevdfs} is an equivalence.

Since $\mu$ is  known to be an equivalence on $\Yo(X)$ for discrete $X$ (see \cite[Prop. 10.1]{ass}) we have the inclusion  
$$\CM_{\disc}\subseteq \CM_{\cass}\ .$$

On the other hand, if $X$ in $\BC$ has bounded geometry, then  it follows from 
\eqref{erfwerfrefwf} that $$\cO^{\infty,\strg}\bP(X)\in \CM^{\strg}_{\disc}\ .$$ 
Here we use that $\cO^{\infty,\strg}$ is a local homology theory, and therefore sends finite-dimensional simplicial complexes with the spherical path metric to objects in $\CM^{\strg}_{\disc}$;  see \cite[Sec. 11]{ass} for  
the details of the induction argument. We further use bounded geometry and 
\cref{wtrhwrtgwtgtwg} in order to see 
  that   for every entourage $U$ in $\cC_{X}$, here exists $U'$ in $\cC_{X}$ such that $U\subseteq U'$ and the map
$P_{U}(X)\to P_{U'}(X)$ factorizes over a    finite-dimensional simplicial complex.

Thus, $$\cO^{\infty,\strg}\bP (\CM_{\bgeom})\subseteq \CM^{\strg}_{\disc}$$
and therefore
\begin{equation}\label{regeopwer}\cp(\CM_{\cass}\cap \CM_{\bgeom})\subseteq \CM^{\strg}_{\disc}\ .
\end{equation}

\begin{rem}The basic open problem is whether
 \begin{equation} \xymatrix{\CM_{\disc}\cap \CM_{\bgeom}\ar[r]^{\cp}\ar[d] &\CM^{\strg}_{\disc}\cap \CM^{\strg}_{\bgeom} \ar[d] \\ \CM_{\bgeom}\ar[r]^{\cp} &\CM^{\strg}_{\bgeom} } 
\end{equation} 
is a pullback; i.e., whether for a bounded geometry object $X$ in $\CM$ the condition
that $\cp(X)$ is discrete implies that $X$ is discrete.
In this case $$\CM_{\disc} \cap \CM_{\bgeom}=  \CM_{\cass}\cap \CM_{\bgeom} \ .$$ 
If the coarse homology theory $\cO^{\infty,\strg}\bP$ were strong (which is also not known), then the answer would be yes.
In this case the restriction of the coarse assembly map to bounded geometry objects
could be identified with the  counit of the adjunction (which exists unconditionally for formal reasons)
$$\incl:\CM^{\strg}_{\disc}\cap  \CM^{\strg}_{\bgeom} \leftrightarrows   \CM^{\strg}_{\bgeom}\ :R\ .$$
On the other hand, if this  were true, then the right adjoint $R$ itself would be equivalent to
$\Sigma^{-1} \cO^{\infty,\strg}\bP_{|\CM^{\strg}_{\bgeom}}$  and would therefore preserve colimits; i.e., would also be  a left adjoint. Because of this, it seems to be 
 unreasonable to believe that $\cO^{\infty,\strg}\bP$ is strong.
\hB
 \end{rem}
 
 
 \section{Sequence spaces and the $L^{2}$-index theorem}\label{hkoeprttrgegertg1}

Let $Y$ be a bornological coarse space that decomposes as a coarsely disjoint union of subspaces $(Y_{n})_{n\in \nat}$. We consider the big family $Y_{-}:=(Y_{\le k})_{k\in \nat}$
with $Y_{\le k}:= \bigsqcup_{n\le k} Y_{n} $.   
We furthermore consider a branched coarse $G$-covering   $f:X\to Y$  with respect to $Y_{-}$ (see \cref{jijhitrojhortrgrtgertgeg}) and set
 $X_{n}:=f^{-1}(Y_{n})$.   We define the big family $X_{-}:=f^{-1}(Y_{-})$ on $X$.
Using excision we obtain canonical maps
$$K\cX(Y )\to \prod_{n\in \nat} K\cX(Y_{n})\ , \quad K\cX^{G}(X )\to \prod_{n\in \nat} K\cX^{G}(X_{n})$$
which associate to a class $p$   its sequence of components $(p_{n})_{n}$.
We get induced maps 
$$K\cX(Y ,Y_{-})\to \frac{\prod_{n\in \nat} K\cX(Y_{n})}{ \bigoplus_{n\in \nat} K\cX(Y_{n})}\ , \quad K\cX^{G}(X,X_{-} )\to \frac{\prod_{n\in \nat} K\cX^{G}(X_{n})}{\bigoplus_{n\in \nat} K\cX^{G}(X_{n})}\ ,$$
which associate to a class $[p]$   the tail $[(p_{n})_{n\in \nat}]$ of the sequence of components. Note that in this case  only the tail is well-defined, not the components $p_{n}$ separately.

We now assume  for every $n$ in $\nat$ that $Y_{n}$ is bornologically bounded  and that $X_{n}$ is bornologically $G$-bounded.
We then  
  have  traces $\tau:\pi_{0} K\cX(Y_{n})\to \Z$ and  $G$-traces $\tau^{G}:\pi_{0} K\cX^{G}(X_{n})\to \R$, see 
 \cite[Sec. 9]{Bunke:2025aa} for details.
 Applying these traces to the tails of the sequences of components we get traces
$$\bar \tau:\pi_{0}K\cX (Y,Y_{-})\to\frac{\prod_{\nat}\Z}{\bigoplus_{\nat}\Z}\ , \quad \bar \tau^{G}:\pi_{0} K\cX^{G} (X,X_{-})\to\frac{\prod_{\nat}\R}{\bigoplus_{\nat}\R}\ .$$

We now assume in addition  that $X$ has finite asymptotic dimension. Then the transfer
$$f^{*}:K\cX(Y,Y_{-})\to K\cX^{G}(X,X_{-})$$ is defined by \cite[Cor. 8.8]{Bunke:2025aa}. Specializing the coarse assembly map  \eqref{gewrferfwerfrefwref} to the motive $\Yo(Y,Y_{-})$ in $\CM$ we 
 get  the morphism of spectra
\begin{equation}\label{gwrefwerfwerfwref}
\mu:\Sigma^{-1}K\cX(\cO^{\infty,\strg}\bP(Y,Y_{-}))\to  K\cX(Y,Y_{-})\ .
\end{equation} We consider a class $[p]$ in $\pi_{0}K\cX (Y,Y_{-})$.

Note that Assumption \ref{retgtrtbvgfdbgfd}.\ref{koperhtrtgetrgertg}   together with 
  Assumption \ref{retgtrtbvgfdbgfd}.\ref{ijfioerfjowewfqwef}  
  implies that   the terms in \eqref{grewfwerfrw34w} below are well-defined as explained above.  
The following  \cref{retgtrtbvgfdbgfd} is a version of 
 \cite[Lem. 6.5]{Willett_2012} in the context of the coarse assembly map $K\cX(\mu)$ with $\mu$ as in \eqref{okopgtrgegrtg}. 
 \begin{theorem}\label{retgtrtbvgfdbgfd}   
 We assume:  \begin{enumerate} 
 \item \label{koperhtrtgetrgertg} 
 The components $Y_{n}$ are bornologically bounded, and  $X_{n}$ are bornologically $G$-bounded for all $n$.
  \item \label{hwrtwrgwrgrtgt} $Y_{n}$ is a finite union of  coarsely bounded coarse components for every $n$ in $\nat$.  
  \item \label{ijfioerfjowewfqwef}  The bornological coarse space $X$ has finite asymptotic dimension.
    \item \label{gkpwerferfwfref} For a cofinal subset of entourages $U$ of $X$ we have:
 \begin{enumerate}   \item  \label{gergewferfwerferw} The bornological coarse space  $Y_{f(U)}$ has bounded geometry. 
  \item \label{kpogwregrefwrf} 
  The bornological coarse space $X_{U}$ has finite asymptotic dimension. 
  \end{enumerate}
 \end{enumerate}
 If    
 $[p]$ belongs to the image of the coarse assembly map $$K\cX(\mu ):\pi_{1}K\cX(\cO^{\infty,\strg}\bP(Y,Y_{-}))\to \pi_{0}K\cX(Y,Y_{-})\ ,$$ then we have
\begin{equation}\label{grewfwerfrw34w}
\bar \tau([p])=\bar \tau^{G}(f^{*}[p])\ .
\end{equation} 
\end{theorem}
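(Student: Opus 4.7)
The plan is to lift the assembly relation $[p] = K\cX(\mu)(\alpha)$ through the transfer, and then apply the $L^{2}$-index theorem of \cite[Sec.~9]{Bunke:2025aa} component-wise. Two ingredients are needed: (i) a compatibility statement saying that $f^{*}$ commutes with the assembly on the image, so that $f^{*}[p]$ is itself an equivariant assembly image on $(X, X_{-})$; and (ii) a component-wise equality of the ordinary trace on $Y_{n}$ with the $G$-trace of the lifted class on $X_{n}$ for all sufficiently large $n$.

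For (i), I would exploit the fact that the motivic assembly map is built from the cone boundary $\partial^{\cone,\strg}$ and the Rips complex functor $\bP$ of \cref{okopgtrgegrtg}, both of which admit natural equivariant lifts along the branched coarse $G$-covering $f$ by the transfer formalism of \cite{Bunke:2025aa}. Assumptions \ref{retgtrtbvgfdbgfd}.\ref{gkpwerferfwfref}(a)--(c) enter precisely here: finite asymptotic dimension of $X_{U}$ and of $X$ ensures that the transfer is defined on the Rips-complexified cone pairs, while bounded geometry of $Y_{f(U)}$ guarantees that the Rips complex of $Y$ is compatible with the transfer construction. The resulting commutative square relating $\mu$ and its equivariant analogue $\mu^{G}$ via $f^{*}$ yields a lift $\tilde\alpha \in \pi_{1} K\cX^{G}(\cO^{\infty,\strg}\bP(X, X_{-}))$ with $f^{*}[p] = K\cX^{G}(\mu^{G})(\tilde\alpha)$. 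For (ii), the coarsely disjoint decomposition $Y = \bigsqcup_{n} Y_{n}$ together with $u$-continuity and excision splits the relative $K$-homology into the tail of the component $K$-theories, and similarly for $(X, X_{-})$; the traces $\bar\tau$ and $\bar\tau^{G}$ are precisely the tails of the component traces. Assumption \ref{retgtrtbvgfdbgfd}.\ref{hwrtwrgwrgrtgt} ensures that $\tau(p_{n}) \in \Z$ is well-defined on each $Y_{n}$, and bornological $G$-boundedness of $X_{n}$ gives $\tau^{G}((f^{*}p)_{n}) \in \R$. Since the restriction $f : X_{n} \to Y_{n}$ inherits the structure of a branched coarse $G$-covering over a bornologically bounded base with $X_{n}$ of finite asymptotic dimension, the $L^{2}$-index theorem of \cite[Sec.~9]{Bunke:2025aa} applies and yields $\tau(p_{n}) = \tau^{G}((f^{*}p)_{n})$ for each $n$ large enough. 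Collecting these equalities into tails gives \eqref{grewfwerfrw34w}.

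The main obstacle is the compatibility asserted in step (i). In the classical Willett--Yu framework \cite{Willett_2012}, both $\mu$ and $f^{*}$ are realized at the level of $C^{*}$-algebras, and the compatibility is essentially automatic. Here $\mu$ is constructed via the homotopical machinery of cones and Rips complexes, while the transfer is defined in \cite{Bunke:2025aa} through a separate functoriality package on $\BCtr$. Matching the two requires verifying that the branched covering structure on $(X, X_{-}) \to (Y, Y_{-})$ extends compatibly to the cone--Rips pair $(\cO^{\infty,\strg}\bP(X, X_{-}), \cO^{\infty,\strg}\bP(Y, Y_{-}))$, and that the induced transfer intertwines $\partial^{\cone,\strg}$ on both sides as a morphism of spectra (not merely on homotopy groups). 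This is where the bounded-geometry and finite-asymptotic-dimension hypotheses in \ref{retgtrtbvgfdbgfd}.\ref{gkpwerferfwfref} play a genuinely essential role, by placing the relevant objects inside the subcategory on which the transfer of \cite{Bunke:2025aa} is defined.
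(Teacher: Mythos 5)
There is a genuine gap, and it sits in your step (ii). The equality $\tau(p_{n})=\tau^{G}((f^{*}p)_{n})$ is \emph{not} a consequence of $f:X_{n}\to Y_{n}$ being a branched coarse $G$-covering over a bornologically bounded base with finite asymptotic dimension: the whole point of the application in the later sections is that the ghost-projection class has $\bar\tau([p])=[(1)_{n}]\neq 0$ while $f^{*}[p]=0$, so the unconditional componentwise statement you invoke is false. The $L^{2}$-index theorem of \cite[Thm.~11.3]{Bunke:2025aa} is a statement about \emph{cone-boundary (index) classes} along uniform $G$-coverings of finite simplicial complexes. To use it componentwise you must first turn the hypothesis ``$[p]=K\cX(\mu)([v])$'' into componentwise data: unfold the colimits defining $\bP$ and the relative evaluation ($u$-continuity) to fix an entourage $V$ and an $n_{0}$ so that $v$ is represented by a class $v'$ with components $v'_{n}\in\pi_{1}K\cX(\cO^{\infty}(P_{V}(Y_{n})))$ whose cone boundaries $\partial^{\cone}v'_{n}$ recover the tail of $p$; the index theorem is then applied to $\partial^{\cone}v''_{n}$, not to $p_{n}$ as an abstract class. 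Your proposal never produces these componentwise cone classes, and it misattributes the role of Assumption \ref{retgtrtbvgfdbgfd}.\ref{hwrtwrgwrgrtgt}: its purpose is to make the componentwise complexes finite (so the componentwise index theorem applies), not merely to make $\tau(p_{n})$ well defined.

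The second problem is in step (i), where you assert that finite asymptotic dimension of $X_{U}$ and $X$ ``ensures that the transfer is defined on the Rips-complexified cone pairs.'' This is exactly the point where the paper needs a workaround: the cone transfer $\cO^{\infty}(P(f))^{*}$ of \cite[Def.~5.6]{Bunke:2025aa} requires finite asymptotic dimension of $\cO^{\infty}(P_{U}(X_{>n_{0}}))$, and \cite[Cor.~4.10]{Bunke:2025aa} only yields this when the Rips complex is finite-dimensional, which finite asymptotic dimension of $X_{U}$ does not give. The paper resolves this via \cref{wtrhwrtgwtgtwg}: using bounded geometry of $Y_{f(U)}$ it replaces $P_{V}(Y_{>n_{0}})$ up to uniform homotopy by the finite-dimensional complex $P_{V^{5}}(Z)$ over a maximal $V$-discrete subset $Z$, factors $v'$ through a class $v''$ there, checks via \cite[Lem.~2.17]{Bunke:2025aa} that $\hat P(f)$ is still a branched coarse $G$-covering, and only then applies the index theorem on the finite complexes $P_{V}(Z_{n})$. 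Moreover, the paper does not construct an equivariant assembly map $\mu^{G}$ and a lift $\tilde\alpha$ of $f^{*}[p]$ through it (such a compatibility is not available off the shelf); it only uses naturality of the transfers and of the cone boundary in an explicit diagram to compare the two tails of traces. So while your overall flavor (lift through assembly, apply the $L^{2}$-index theorem componentwise) matches the paper, the two steps you flag as the core of the argument are precisely the ones that are either asserted without the necessary construction or rest on a claim that fails as stated.
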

\begin{proof}

By assumption, there exists $[v]$ in $\pi_{1}K\cX(\cO^{\infty,\strg}\bP(Y,Y_{-}))$ such that $\mu([v])=[p]$.  For the argument, we must unfold the definition of $\mu$ in this relation based on \cref{okopgtrgegrtg}.

  Unfolding the definition of $\bP$ and of  the evaluation of a coarse homology theory on the pair $(Y,Y_{-})$
  we can  choose an entourage $V$ of $Y$ and $n_{0}$ in $\nat$ 
such that 
$[v]$ comes from a class $v'$ in   $$\pi_{1} K\cX (\cO^{\infty}(P_{V}( Y)),\cO^{\infty}(P_{V }(Y_{\le n_{0}})))\ .$$
The class $v'$ has well-defined components
$v'_{n}$ in $ \pi_{1}K\cX (\cO^{\infty}(P_{V}(Y_{n})))$ for all $n$ in $\nat$ with $n> n_{0}$.
We let $d:Y_{V}\to P_{V}(Y)$ in $\BC$  be the coarse equivalence induced by the inclusion of the zero skeleton, and let
$d_{n}:Y_{V,n}\to P_{V}(Y_{n})$ be the restriction of this map to $Y_{n}$. After enlarging $V$ 
 we can furthermore assume that $[p]$ comes from a class $[p']$ in $\pi_{0}K\cX(Y_{V},Y_{-})$ and  that   $ \partial^{\cone}[ v']=d([p'])$ in $\pi_{0}K\cX(P_{V}(Y),P_{V}(Y_{-}))$, where $[ v']$ denotes the image of $v'$ in $\pi_{1} K\cX (\cO^{\infty}(P_{V}( Y)),\cO^{\infty}(P_{V }(Y_{-})))$. 
 This implies the equality 
 of tails $[(\partial^{\cone} v'_{n})_{n}]=[(d_{n}(p'_{n}))_{n}]$ in  $$\frac{\prod_{n\in \nat} K\cX_{0}(P_{V}(Y_{n}))}{ \bigoplus_{n\in \nat} K\cX_{0}(P_{V}(Y_{n}))}
   \ .$$

After increasing $n_{0}$ further,  we can assume, by \cite[Prop. 3.13]{Bunke:2025aa}, that the map
\begin{equation}\label{t34rw3fw}
P(f)_{> n_{0}}:P_{U }( X_{>n_{0}})\to P_{V }( Y_{>n_{0}})
\end{equation}
is a uniform $G$-covering, where $U  :=f^{-1}(V_{Y_{>n_{0}}} ) \cap P$  is the lift of $V_{>n_{0}}:=V\cap (Y_{>n_{0}}\times Y_{>n_{0}}) $ to a coarse entourage of $X$ such that $f(U)=V_{Y_{>n_{0}}}$.  Here    $P$ is the connection  witnessing the coarse covering (see \cref{jijhitrojhortrgrtgertgeg}).

We want to employ the cone transfer   \cite[Def. 5.6]{Bunke:2025aa}
$$\cO^{\infty}(P(f)_{>n_{0}})^{*}: K\cX(\cO^{\infty}(P_{V}(Y_{>n_{0}})))\to  K\cX^{G}( \cO^{\infty}(P_{U}(X_{>n_{0}}))) \ .$$
 To this end, we must assume that
 $  \cO^{\infty}(P_{U}(X_{>n_{0}}))$ has finite asymptotic dimension.
 
 The inclusion of the zero skeleton induces a coarse equivalence $(X_{> n_{0}})_{U}\to P_{U}(X_{>n_{0}})$.
  By Assumption \ref{retgtrtbvgfdbgfd}.\ref{gkpwerferfwfref}    we can assume, after enlarging $V$ and $n_{0}$ correspondingly so that \eqref{t34rw3fw} is still a uniform $G$-covering, that  $(X_{>n_{0}})_{U}$ has finite asymptotic dimension. In this case,
$P_{U}(X_{>n_{0}})$ also has finite asymptotic dimension. 

\begin{rem} If $P_{V}(Y_{>n_{0}})$ and hence $P_{U}(X_{>n_{0}})$ were finite-dimensional, then
$   \cO^{\infty}(P_{U}(X_{>n_{0}}))$ has finite asymptotic dimension by \cite[Cor. 4.10]{Bunke:2025aa}.
 This is the case if $Y$ has strongly bounded geometry  in the sense that for every entourage $V$, we
have $\sup_{y\in Y} |V[y]|<\infty$.  
This happens e.g.  in the case of 
 Higson's counter example to the coarse Baum-Connes conjecture; see \cite[Sec. 12]{Bunke:2025aa} for details. But in the present paper     we only assume bounded geometry and therefore need a workaround.
 \hB
\end{rem}

 \begin{construction}\label{wtrhwrtgwtgtwg}{\em 
 In this construction, $Y$ can be any bornological coarse space with a symmetric entourage   $V$.
We can choose a  maximal $V$-discrete subset $Z$ of $Y$. Then we have   $\bigcup_{z\in Z} V^{2}[z]=Y$.
We then construct a triangle that  commutes up to a uniform homotopy:
$$\xymatrix{P_{V}(Y)\ar[dr]^{\kappa}\ar[rr]^{\can}&&P_{V^{5}}(Y)\\&P_{V^{5}}(Z)\ar[ur]^{\incl}&}\ .$$
In order to define the map $\kappa$ we first choose, using the $V^{2}$-density of $Z$ in $Y$,  a map $\kappa_{0}:Y\to Z$  such that $(\kappa_{0}(y),y)\in V^{2}$
for all $y$ in $Y$. The map $\kappa_{0}$ linearly  extends to a map $\kappa:P_{V}(Y)\to P_{V^{5}}(Z)$.
We now define the affine  homotopy
$$u\mapsto u \ \can +(1-u)\ \incl\circ \kappa:P_{V}(Y)\to P_{V^{5}}(Y)\ ,$$ where we interpret points in the Rips complexes as finitely supported probability measures.   }\hB \end{construction}
Back to the current proof, by Assumption  \ref{retgtrtbvgfdbgfd}.\ref{gkpwerferfwfref}, we can increase $U$ 
such that $X_{U}$ has finite asymptotic dimension and  $Y_{V}$ has bounded geometry at the same time, where  $V:=f(U)$.  By   \cref{wtrhwrtgwtgtwg} we can assume, after  replacing $V$ by $V^{5}$ and enlarging $n_{0}$   so that \eqref{t34rw3fw} is still a uniform $G$-covering,  that  there exists  
 a subset $Z  $ of $ Y_{>n_{0}} $ such that
 $P_{V }(Z )$ is finite-dimensional and 
   $v'$  is the image   under the canonical inclusion $P_{V}(Z )\to P_{V}(Y_{>n_{0}})$  of  a class $v''$ in $\pi_{1}K\cX(\cO^{\infty}(P_{V}(Z )))$.
 In addition, the sets $Z_{n}:=Z\cap Y_{n}$ are finite for all $n$ in $\nat$ by Assumption \ref{retgtrtbvgfdbgfd}.\ref{hwrtwrgwrgrtgt}.    Similarly, the class $d([p'])$ comes from a class $[p'']$ in $\pi_{0}K\cX( P_{V}(Z ) ,  P_{V}(Y_{-}\cap Z )) $
 and we have $[\partial^{\cone}v'']=[p'']$. 
 
 We set $\hat Z:=f^{-1}(Z) $.
 The maps $f_{V}:(X_{U}\to Y_{V},Y_{-})$, 
 $(\hat P(f):P_{U }(\hat Z)\to P_{V }(Z),P_{V}(Z_{-}))$ and $(P(f):P_{U}(X)\to P_{V}(Y),P_{V}(Y_{-}))$ are all branched  coarse $G$-coverings. For $f_{V}$ this follows from \cite[Lem. 2.17]{Bunke:2025aa} applied to the canonical map $Y_{V}\to Y$ saying that the pullback of a branched coarse $G$-covering is again a branched coarse $G$-covering. Since $d$  and the induced  
$G$-equivariant map  $X_{U}\to P_{U}(X)$ are  coarse equivalences  we can  conclude
  that  $P(f)$  is a branched coarse $G$-covering. We finally use    \cite[Lem. 2.17]{Bunke:2025aa}
  again to conclude that 
   $\hat P(f)$ a branched coarse $G$-covering.
 
 By the naturality of transfers (which is part of  \cite[Def. 5.1]{Bunke:2025aa}), the diagram 
 $$\xymatrix{K\cX( P_{V}(Z) ,  P_{V}(Z\cap Y_{-}))\ar[r]\ar[d]^{\hat P(f)^{*}} &K\cX( P_{V}(Y) ,  P_{V}(Y_{-})\ar[d]^{P(f)^{*}}&\ar[l]_-{d} \ar[d]^{f_{V}^{*}} K\cX(Y_{V},Y_{-})\ar[r]&K\cX(Y,Y_{-})\ar[d]^{f^{*}}\\ 
  K\cX^{G}( P_{U}(\hat Z) ,  P_{U}(\hat Z\cap X_{-}))\ar[r]&K\cX^{G}( P_{U}(X) ,  P_{U}( X_{-}))&\ar[l]  K\cX^{G}(X_{U},X_{-})\ar[r]&K\cX^{G}(X,X_{-})}$$
 commutes. The maps in the upper line identify the classes
 $[p'']$, $d([p'])$, $[p']$ and $[p]$.
 By the naturality of the traces, it suffices to show that
  \begin{equation}\label{qfwedqwdwqedqewdq}
\bar \tau ([p''])=\bar \tau^{G}(\hat P(f)^{*} [p''])\ .
\end{equation} 

The map  
$\hat P(f) :P_{U }(\hat Z )\to P_{V }(Z )$ is a uniform $G$-covering.
Since $P_{U }(\hat Z)\subseteq P_{U }(  X_{>n_{0}})$
 we conclude that $P_{U }(\hat Z)$ has finite asymptotic dimension. Since $P_{U }(\hat Z )$ is a finite-dimensional simplicial complex
we see by \cite[Cor. 4.10]{Bunke:2025aa} that
$\cO^{\infty}(P_{U}(\widehat Z ))$ has finite asymptotic dimension.
The cone transfer
\begin{equation}\label{bfgdbdgbergbf}\cO^{\infty}(\hat P(f) )^{*}:  K\cX(\cO^{\infty}(P_{V }(Z )))\to  K\cX^{G}( \cO^{\infty}(P_{U }(\hat Z )))
\end{equation} 
  is now defined by \cite[Def. 5.6]{Bunke:2025aa}. 
We argue with the following diagram, which commutes by the naturality of transfers and cone boundaries, and where   $n>n_{0}$:
$${\tiny \hspace{-2.5cm}\xymatrix{ \ar[d]\Sigma^{-1}K\cX(\cO^{\infty}(P_{V}( Z_{n})))\ar@/_1cm/[ddddd]_{ \cO^{\infty}(\hat P(f)_{n})^{*}}\ar[rrr]^{\partial^{\cone}}&&&K\cX(P_{V}( Z_{n}))\ar[d]
\\ \Sigma^{-1}K\cX(\cO^{\infty}(P_{V }( Z )))\ar[dr]_{\simeq} \ar[ddd]^{\cO^{\infty}(\hat P(f) )^{*}}\ar[rrr]^-{\partial^{\cone}} &&& K\cX(P_{V }(Z ))\ar[dl]  \\ 
& \Sigma^{-1}K\cX(\cO^{\infty}(P_{V }(Z)) , \cO^{\infty}(P_{V }(Z \cap Y_{\le n_{0}}))) \ar[r]^-{[-]\circ \partial^{\cone}}\ar[d]^{ \cO^{\infty}(P(f))^{*}}&  K\cX(P_{V }(Z) ,P_{V }(Z \cap Y_{-})) \ar[d]^{\hat P(f)^{*}}&\\&\Sigma^{-1}K\cX^{G}(\cO^{\infty}(P_{U }(\hat  Z)), \cO^{\infty}(P_{U }(\hat Z\cap X_{\le n_{0}} )))\ar[r]^-{[-]\circ \partial^{\cone}}& K\cX^{G}(P_{U }(\hat Z), P_{U }(\hat Z\cap X_{-} ))&\\ \Sigma^{-1}K\cX^{G}(\cO^{\infty}(P_{U }(\hat Z )))\ar[ur]^{\simeq} \ar[rrr]^-{\partial^{\cone}} &&& K\cX^{G}(P_{U }( \hat Z ) )\ar[ul]  \\ \Sigma^{-1}K\cX^{G}(\cO^{\infty}(P_{U}(\hat Z_{n})))\ar[rrr]^{\partial^{\cone}}\ar[u]&&& K\cX^{G}(P_{U}(\hat Z_{n}))\ar[u]} }\ .$$
It shows that
the tail of the sequence of  components of  $\hat P(f)^{*} [\partial^{\cone} v'']$
is given by the tail of the family $(\partial^{\cone} \cO^{\infty}(\hat P(f)_{n})^{*} v''_{n})_{n>n_{0}}$.
 Since all the components $Z_{n}$ are finite, the $P_{V}(Z_{n})$ are finite simplicial  complexes and  we can apply 
  the  $L^{2}$-index theorem \cite[Thm. 11.3]{Bunke:2025aa} stating  that 
\begin{equation}\label{werfwerfwrefwerfref}
\tau^{G} (\partial^{\cone} \cO^{\infty}(\hat P(f)_{n})^{*} v''_{n})=\tau (\partial^{\cone} v_{n}'')
\end{equation}
for every $n$ in $\nat$ with $n>n_{0}$.
The  desired equation \eqref{qfwedqwdwqedqewdq}  now follows from   
  \begin{eqnarray*}
\bar\tau^{G}(\hat P(f)^{*}[p''])&=&  \bar \tau^{G} (\hat P(f)^{*}[ \partial^{\cone} v''])\\&=& \bar \tau^{G} ([ \partial^{\cone} \cO^{\infty}(\hat P(f)_{>n_{0}})^{*}v''])\\&=&[ (\tau^{G} (\partial^{\cone} \cO^{\infty}(\hat P(f)_{n})^{*} v''_{n}))_{n}]\\&\stackrel{\eqref{werfwerfwrefwerfref}}{=}&
[ (\tau (\partial^{\cone}   v''_{n}))_{n}]\\&=&\bar \tau( [\partial^{\cone} v''])\\&=&\bar \tau( [p''])\ .
\end{eqnarray*}
  \end{proof}

  \begin{rem}
  The Assumption  \ref{retgtrtbvgfdbgfd}.\ref{koperhtrtgetrgertg} and  \ref{retgtrtbvgfdbgfd}.\ref{ijfioerfjowewfqwef}   are necessary in order to define the terms in the statement \eqref{grewfwerfrw34w} of the theorem.
         The  Assumptions  \ref{retgtrtbvgfdbgfd}.\ref{gkpwerferfwfref}.\ref{gergewferfwerferw}-\ref{kpogwregrefwrf} ensure that the transfers
  in the domain of the coarse assembly map, e.g. the map \eqref{bfgdbdgbergbf},  are defined.
    Note that none of 
  the Assumptions \ref{retgtrtbvgfdbgfd}.\ref{gkpwerferfwfref}.\ref{kpogwregrefwrf}
  and  \ref{retgtrtbvgfdbgfd}.\ref{gkpwerferfwfref}.\ref{ijfioerfjowewfqwef} implies the other. 
  If we are interested in the statement for a single class $[p]$, then we could replace
  $X$ by $X_{U}$ and $Y$ by $Y_{f(U)}$ for a sufficiently large $U$
  satisfying  \ref{retgtrtbvgfdbgfd}.\ref{gkpwerferfwfref}.\ref{kpogwregrefwrf} so that $[p]$ comes from
  $Y_{f(U)}$.  Then $f^{*}[p]$ is  defined (but not the original map $f^{*}$), 
 and the statement still holds.

  Finally, Assumption  \ref{retgtrtbvgfdbgfd}.\ref{hwrtwrgwrgrtgt} ensures that the $L^{2}$-index theorem can be applied to the components.  The finite asymptotic dimension assumptions could be replaced by more general assumptions of an operator norm localization property, but then we cannot cite  \cite{Bunke:2025aa} anymore.
\hB  \end{rem}
  
    \begin{rem}\label{hkoperhtetrgetgrt}

The statement   \eqref{grewfwerfrw34w} is the same as in   \cite[Lem. 6.5]{Willett_2012}. But formally  it does not  follow from this reference since the terms have different technical definitions.    \cite[Lem. 6.5]{Willett_2012} is stated in the case where $Y$ is a  space of graphs in the sense of   \cite[Def. 1.1]{Willett_2012}, but it is clearly true  and also used in the literature
in more general situations. With  \cref{retgtrtbvgfdbgfd}.\ref{koperhtrtgetrgertg} 
we hope to provide a reference  for such a  result  with explicit, very general assumptions (not excluding  the possibility to generalize further by replacing  finite asymptotic dimension by the operator norm localization property).
\hB
 \end{rem}

 \section{Proof of \cref{kopherthretge9}}\label{hkoeprttrgegertg}

We  consider $X$ in $\Fun(BG,\UBC)$ satisfying the assumptions of \cref{kopherthretge9}. In this section, we show,
 in analogy to   \cite[Sec. (4.2)]{Kitsios:2025aa},  that  $$\Yo(\cO^{\infty}(X)//G)\not\in \CM_{\cass}\ .$$
 More concretely, we will show that the specialization of the  coarse assembly map  (see  \eqref{gewrferfwerfrefwref})
 \begin{equation}\label{cgtrtegrtg}
\mu_{K\cX,\cO^{\infty}(X)//G}: 
 \pi_{2}K\cX(\cO^{\infty,\strg}\bP(\cO^{\infty}(X)//G))\to \pi_{1}K\cX(\cO^{\infty}(X)//G)
\end{equation}
 is not surjective.
 
 As will be explained in \cref{kophjkertophrtgertgrteg9},    under the assumptions of \cref{kopherthretge9}, we have  a $\Z$-invariant class $p$ (defined by  \eqref{whtgwergergws}) in $\pi_{0}K\cX(\Sq(X)//G )$ whose components
 $p_{n}$ in $\pi_{0}K\cX(\Sq_{n}(X)//G) $ are represented by one-dimensional projections.
 
  We let $[p]$ denote the image of $p$ in the relative coarse $K$-homology group $$\pi_{0}K\cX(\Sq(X)//G,\Sq_{-}(X)//G)\ .$$ 
Since $[p]$ is $\Z$-invariant, the class 
 $(\pm 1)[p]$ (see \cref{kopggwregweg} for notation)  is annihilated by $1+t$ in the fibre sequence  \eqref{rewgerfewrfwe}. Using this fibre sequence and copying the ideas from \cite[Sec. (4.2)]{Kitsios:2025aa},
  there  exists a class $u$ in $\pi_{1}K\cX(\cO^{\infty}(X)//G)$ such that
$\delta (u)=(\pm 1) [p]$.   In order to show \cref{kopherthretge9}, it suffices   to show that $u$ does not belong to the image of the assembly map \eqref{cgtrtegrtg}.  
 
Assume for a contradiction   that $$u=\mu_{K\cX, \cO^{\infty}(X)//G}(w)$$ for some class $w$ in $\pi_{2}K\cX(\cO^{\infty,\strg}\bP(\cO^{\infty}(X)//G))$.
Since the coarse assembly map is a natural transformation of coarse homology theories, it is compatible with Mayer-Vietoris boundary maps. In particular, we have an  equivalence
$$\delta \circ \mu_{K\cX, \cO^{\infty}(X)//G}\simeq \mu_{K\cX,\Yo(\Sq(X)//G,\Sq_{-}(X)//G)} \circ \delta\ .$$
  We can  then conclude that  \begin{equation}\label{hrtegertgetgetgergtr}
 (\pm 1)[p]= \mu_{K\cX,\Yo(\Sq(X)//G,\Sq_{-}(X)//G)} (v)
\end{equation}
  for   $v:=\delta(w)$ in $$\pi_{1} K\cX(\cO^{\infty,\strg}\bP(\Sq(X)//G,\Sq_{-}(X)//G))\ .$$

 In view of the construction of $p$ in  \cref{kophjkertophrtgertgrteg9} and by \cref{jophrthetgetgetg},
we can assume that   $p$ comes from a class $p'$ in $\pi_{0}K\cX(\Sq(X)_{V}//G) $,
where  $V$ is  some symmetric $G$-adapted 
 entourage of $\Sq(X)$. 
The class $p'$ has well-defined components $p'_{n}$ in $\pi_{0}K\cX( \Sq_{n}(X)_{V_{n}}//G)$. \
  The sequence of traces of the components is given by $(\tau(p'_{n}))_{n }=(1)_{n}$ in $ \prod_{\Z}\Z$.   But note that the classes $p$ and $p'$ are more than the sequences of their components; see the following \cref{okwpogwkrpog}.

\begin{ex} \label{okwpogwkrpog}Here is an example of a non-trivial class in $\pi_{0}K\cX(\Sq(X)//G)$  (even $\Z$-invariant)  with zero components, see \cite[(4.2)]{Kitsios:2025aa}.
For simplicity, we assume that every uniform entourage of $X$  generates the maximal coarse structure of $X$. This is the case if $X$ is a path-metric space of bounded diameter. By this additional assumption, we have 
$K\cX_{0}(\Sq_{n}(X))\cong\pi_{0} KU\cong \Z$ for every $n$ on $\Z$,
and this isomorphism is given by the trace. The inclusion of a point $*\to X$  induces the first map in the composition \begin{equation}\label{htregetrgrtgert}
K\cX(*)\to K\cX(X_{\disc})\stackrel{!}{\to} K\cX(\Sq(X_{\disc}))\to  K\cX(\Sq(X_{\disc})//G)\to  K\cX(\Sq(X)//G)\ ,
\end{equation} where the marked map is a transfer as in  \cref{oijoegwerferfwref}.
   We let $e$ in $\pi_{0}K\cX(\Sq(X)//G)$ denote the image of $1$  in $\Z\cong \pi_{0}K\cX(*)$ under this composition. Since $e_{n}$ and $p_{n}$ both have trace $1$ we conclude that 
    $p_{n}=e_{n}$ for every $n$ in $\Z$.
Therefore $p-e$ has zero components. 
Since the maps  in \eqref{htregetrgrtgert} starting from $K\cX(\Sq(X_{\disc}))$ are induced by morphisms of bornological coarse spaces and the coarse assembly map is an equivalence for the discrete space $\Sq(X_{\disc})$ the class $e$ is in the image of the coarse assembly map.
We will show in the present section that $[p]$ is not in the image of the coarse assembly map. 
We can therefore conclude that $e-p\not=0$.
\hB
\end{ex}

By \cref{uoiwgrgwegwerfwfwerf},   the map
\begin{equation}\label{gregeferfrfw}
f:(G\ltimes \Sq(X)_{V})//G\to \Sq(X)_{V}//G
\end{equation}  is a branched coarse $G$-covering with respect to the big family  $ \Sq_{-}(X)_{V}$. 
We want to apply \cref{retgtrtbvgfdbgfd} to \eqref{gregeferfrfw}.
In the following, we verify the assumptions.

Since $X$ is bornologically bounded, we can conclude  that for all $n$ in $\nat$ the components  $\Sq_{n}(X)_{V}//G$ are bornologically  bounded and that the components 
$(G\ltimes \Sq(X)_{V})_{n}//G$ are bornologically  $G$-bounded.
Hence $\Sq(X)_{V}//G$ satisfies Assumption \ref{retgtrtbvgfdbgfd}.\ref{koperhtrtgetrgertg}.
 By Assumption \ref{kopherthretge9}.\ref{hpkprhtrgrtegerg1}, 
every uniform entourage of $X$ generates a coarse structure in which $X$
  has finitely many coarsely bounded coarse components.
 Therefore,
the bornological coarse space $\Sq(X)_{V}//G$ satisfies
Assumption \ref{retgtrtbvgfdbgfd}.\ref{hwrtwrgwrgrtgt}.

Recall the notion of   finite uniform topological dimension \cite[Def. 4.5]{Bunke:2025aa}.
If $X$ has finite  uniform topological dimension   and finite (coarse) asymptotic dimension, then $\Sq(X)$ 
has finite asymptotic dimension by \cite[Cor. 4.9]{Bunke:2025aa}. In our present situation, we need that 
$\Sq(X)_{V}$ has finite asymptotic dimension for every $G$-adapted entourage $V$. The coverings with Lebesgue entourage $V$ provided  by  \cite[Cor. 4.9]{Bunke:2025aa} are bounded in the coarse structure of $\Sq(X)$, but not necessarily bounded in the smaller coarse structure of $\Sq(X)_{V}$.  
 In order to ensure the latter condition, we need the following stronger finiteness condition.

 Let $X$ be in $\UBC$.  
\begin{ddd}\label{asna} 
 A uniform scale $(V_{n})_{n\in \nat}$ has  finite Assouad-Nagata dimension 
 if there exist $d$ and $k$ in $\nat$ such that for every $l$ and $n$ in $\nat$ there exists
 a $V_{n}^{kl}$-bounded covering of multiplicity $d$ with Lebesgue entourage $V^{l}_{n}$.
    \end{ddd}
    
    \begin{rem}
    We use the term {\em Assouad-Nagata dimension}
   since we consider \cref{asna} as the natural generalization of the classical notion of finite 
Assouad-Nagata dimension from metric spaces to uniform spaces.
If the uniform scale  is a metric scale  $(V_{r(n)})_{n\in \nat}$ with $\lim_{n\to \infty} r(n)=0$, then
the condition boils down to the classical notion.
 \hB
\end{rem}
\begin{rem}
One could try to define finite Assouad-Nagata dimension as the condition that there exists
$d$ and $k$ in $\nat$ such that for every uniform entourage $V$ of $X$ there exists
a $V^{k}$-bounded covering with multiplicity bounded by $d$ and Lebesgue entourage $V$.
But this condition might be too strong even in the metric case, since it must also be satisfied   for  entourages $V$ that are not contained
in any bounded metric entourage.  
 \hB
\end{rem}

For simplicity, we assume that $X$ has the maximal coarse structure, as required anyway by Assumption \ref{kopherthretge9}.\ref{lptgeertgetrg}. Recall that $V$ is a $G$-adapted symmetric  coarse entourage of $\Sq(X)$.
After enlarging $V$ further, we can  assume that $V_{n}=X\times X$ for $n<0$.
 
 \begin{lem}\label{koptwehrtherthrteeg}
 If  the uniform scale $(V_{n})_{n\in \nat}$   has finite   Assouad-Nagata dimension,   then 
  $\Sq(X)_{V}//G$ has bounded geometry. 
  \end{lem}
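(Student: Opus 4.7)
My plan is to establish bounded geometry of $\Sq(X)_V//G$ in the sense of \cite{buen} by exhibiting a coarsely dense discrete subset $D \subseteq \Sq(X)_V//G$ whose $W$-balls have cardinality uniformly bounded for each cofinal coarse entourage $W$.

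First, I would reduce to a convenient cofinal family of entourages. By $G$-adaptedness (\cref{herjtogitrjgoetrgrtegetrg}), every coarse entourage of $\Sq(X)_V//G$ is contained in one of the form $W_{F,m} := U_F \circ V^m$, where $F$ ranges over finite symmetric subsets of $G$ containing $e$ and $m \in \nat$, with $U_F$ denoting the orbit relation induced by $F$ as in \eqref{ertkhertgrtgge}. Each such $W_{F,m}$ is supported on the diagonal of $\Z$, so the analysis localizes to individual slices $\{n\} \times X$, which are themselves coarsely disjoint in $\Sq(X)_V$.

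Second, I would use the Assouad-Nagata hypothesis with constants $d, k_0 \in \nat$ from \cref{asna} to build the discretization. For a fixed sufficiently large $m_0 \in \nat$ and each $n \ge 0$, the hypothesis yields a covering $\cU_n = \{B_{n,i}\}_{i \in I_n}$ of $X$ of multiplicity $\le d$ whose members are $V_n^{k_0 m_0}$-bounded and which has Lebesgue entourage $V_n^{m_0}$. Selecting one representative $d_{n,i} \in B_{n,i}$ per element gives a $V_n^{k_0 m_0}$-dense subset $D_n \subseteq \{n\} \times X$ with $|D_n|=|I_n|$. For $n < 0$, where $V_n = X \times X$ makes $\{n\} \times X$ already bounded in $\Sq(X)_V$ (using Assumption \ref{kopherthretge9}.\ref{lptgeertgetrg}), I take $D_n$ to be a single representative of $\{n\}\times X$. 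Setting $D := \bigsqcup_{n \in \Z} D_n$, density of $D$ in $\Sq(X)_V//G$ is immediate from the construction slice by slice.

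Third, I would verify the uniform bound on $W_{F,m'}$-balls of points in $D$. Coarse disjointness of slices reduces the estimate to each slice separately. Within slice $n \ge 0$, the $W_{F,m'}$-ball of $(n, d_{n,i})$ in $D$ consists of those $(n, d_{n,j})$ such that $g^{-1} d_{n,j} \in V_n^{m'}[d_{n,i}]$ for some $g \in F$; applying $g V_n^{m'} \subseteq V_n^{m' k(g)}$ for each $g \in F$, this count is bounded by $|F|$ times the number of cover elements of $\cU_n$ meeting a single $V_n^{k_F m'}$-ball around $g d_{n,i}$, where $k_F := \max_{g \in F} k(g)$. The latter is controlled by the multiplicity $d$ together with the Lebesgue property, by pairing each such cover element with a point of a maximal $V_n^{m_0}$-discrete net in the enlarged ball (which is finite by Assumption \ref{kopherthretge9}.\ref{hpkprhtrgrtegerg1}) and using that the Assouad-Nagata constants are independent of $n$. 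For $n < 0$ the ball meets $D_n$ in at most one point.

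The main obstacle I foresee is precisely the verification that the number of $\cU_n$-elements meeting a $V_n^{k_F m'}$-thickening is bounded \emph{uniformly in $n$}. This is exactly the content of \cref{asna}: the constants $d$ and $k_0$ are independent of $n$ and $l$, which forces the ratio between the Lebesgue scale $V_n^{m_0}$ and the enlarged scale $V_n^{k_F m'}$ to admit a combinatorial bound on the number of intersecting cover elements that depends only on $F$, $m'$, $m_0$, $k_0$, $d$. This is the reason \cref{asna} was introduced in the stated form rather than as a pointwise finite-dimensionality hypothesis.
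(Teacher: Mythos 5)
Your route is essentially the paper's: reduce to the cofinal entourages $U_{F}\circ V^{m}$ via $G$-adaptedness and the Lipschitz constants, localize to the coarsely disjoint slices, and then count using the covers supplied by \cref{asna}. The genuine gap is the step you yourself flag as the main obstacle and then dismiss as being ``exactly the content of \cref{asna}'': the claim that the number of members of $\cU_{n}$ meeting a $V_{n}^{k_{F}m'}$-ball admits a bound depending only on $F,m',m_{0},k_{0},d$. This does \emph{not} follow from multiplicity together with the Lebesgue property and the mesh bound: multiplicity controls the number of members through a single point, not the number of members meeting a bounded set. Concretely, suppose a slice at scale $V_{n}$ looks like a metric star with $N_{n}$ spokes of length about $10m_{0}$ in $V_{n}$-units (such slices occur, e.g. for a compact hedgehog of shrinking spokes, an $\R$-tree, hence of Assouad--Nagata dimension $1$ with uniform constants). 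An admissible cover as in \cref{asna} consists of one ball of radius about $4m_{0}$ around the branch point together with suitably overlapping intervals of length about $3m_{0}$ along each spoke, the first one starting at distance $2m_{0}$ from the branch point; it has multiplicity $\le 3$, Lebesgue entourage $V_{n}^{m_{0}}$ and $V_{n}^{8m_{0}}$-bounded members, yet any $V_{n}^{k_{F}m'}$-ball around the branch point with $k_{F}m'\ge 3m_{0}$ meets the first interval member of every spoke, i.e. at least $N_{n}$ members, and $N_{n}$ may grow with $n$ while $d,k_{0},m_{0},m',F$ stay fixed. Your proposed repair, pairing each such member with a point of a maximal $V_{n}^{m_{0}}$-discrete net in the enlarged ball, is circular: the cardinality of such a net in a $V_{n}^{k_{F}m'}$-ball is precisely the bounded-geometry quantity you are trying to establish, and Assumption \ref{kopherthretge9}.\ref{hpkprhtrgrtegerg1} only gives finiteness for each fixed $n$, with no uniformity in $n$ --- which is the whole point.

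For comparison, the paper's own proof asserts the analogous statements at exactly the same juncture ($|\{i\in I\mid Y\cap W_{i}\neq\emptyset\}|\le d^{2l}$ for $V^{2l}$-bounded $Y$, and $|W_{i}\cap Z|\le 1$ although the members are only $V^{k}$-bounded), so you have reproduced the intended strategy rather than taken a wrong route; but neither your sketch nor a bare appeal to \cref{asna} supplies the decisive uniform count, and closing it requires extracting genuinely more from the hypothesis than the three stated properties of a single cover (for instance by playing the covers for different parameters $l$ against each other, or by adding a doubling-type condition on the scale). A secondary, fixable point: you should also argue that each $D_{n}$ is finite (needed so that $D$ with the minimal bornology embeds properly and the inclusion is a coarse equivalence); choosing a maximal separated subset, finite by Assumption \ref{kopherthretge9}.\ref{hpkprhtrgrtegerg1}, is safer than one representative per member of a possibly infinite cover.
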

  \begin{proof}
    We will first show, under the assumption that $(V_{n})_{n\in \nat}$ is a uniform scale  with   finite Assouad-Nagata dimension, that $\Sq(X)_{V}$ has bounded geometry. Then we use the Lipschitz condition in order to  conclude that
  $\Sq(X)_{V}//G$ has bounded geometry too.

It suffices to show for any $V^{2}$-discrete subset $Z$ of $\Sq(X)_{V}$ that for every $l$ in $\nat$ we have  $\sup_{y\in \Sq(X)} |V^{l}[y]\cap Z|<\infty$.

If $y\in \Sq_{<0}(X)$, then $  |V^{l}[y]\cap Z| \le 1$.  It therefore  remains to consider $y$ in $\Sq_{\ge 0}(X)$.
      Let $k$ and $d$  be as in   \cref{asna}. 
   For every $n$ in $\nat$ we choose  a $V^{k}_{n}$-bounded covering of $X$ with Lebesgue entourage $V_{n}$ and multiplicity bounded by $d$.
   These coverings together provide a $V^{k}$-bounded covering $\cW=(W_{i})_{i\in  I}$  of $\Sq_{\ge 0}(X)$ with multiplicity bounded by $d$ and   with Lebesgue entourage $ V$.
Therefore, for every $m$ in $\nat$, we have   $|\{i\in I\mid Y\cap W_{i}\not=\emptyset\}|\le d^{2m}$  for every $V^{m}$-bounded subset $Y$ of $\Sq_{\ge 0}(X)$.  Note that $V^{l}[y]$ is $V^{2l}$-bounded for every $y$ in $\Sq_{\ge 0}(X)$.  
   By the $V^{2}$-discreteness of $Z$   for every $i$ in $I$ we have $|W_{i}\cap Z|\le 1$. We conclude that   
   \begin{equation}\label{wercfvsvdfv}\sup_{y\in \Sq_{\ge 0}(X)} |V^{l}[y]\cap Z|\le \sup_{y\in \Sq_{\ge 0}(X)} |\{i\in I\mid V^{l}[y]\cap W_{i}\not=\emptyset\}| \le d^{2l}<\infty \ .
\end{equation}
 This finishes the proof of bounded geometry for $\Sq(X)_{V}$.

Let $F$ be a finite subset of $G$ and set \begin{equation}\label{oerwigjregvweorgwergwr}W_{F}:=\{(gy,y)\mid f\in F, y\in \Sq(X)\}\ .
\end{equation} Since $V$ is $G$-adapted (see \cref{herjtogitrjgoetrgrtegetrg}), the uniform scale $(V_{n})_{n\in \nat}$ is a Lipschitz scale for the $G$-action. Therefore 
  the entourages $W_{F}\circ V^{l}$ for all finite subsets $F$ of $G$ and $l$ in $\nat$ are cofinal in the coarse structure of
$\Sq(X)_{V}//G$. We have $W_{F}[V^{l}[y]]=\bigcup_{g\in F} g V^{l}[y]$. Let $r$ in $\nat$ be a bound on the Lipschitz constants (see \cref{kohpertgetrgeg1})
of the elements $g$ in $F$.  Then  $W_{F}[V^{l}[z]]\subseteq \bigcup_{g\in F}  V^{rl}[gy]$.   It follows that
$ \sup_{y\in \Sq(X)}|W_{F}[V^{l}[y]]\cap Z|\le |F| \sup_{y\in \Sq(X)} |V^{rl}[y]\cap Z|<\infty$.
  \end{proof}

 For simplicity, we again assume that $X$ has the maximal coarse structure.  \begin{lem}\label{pltherthtergegrtg} Under the same assumptions as in \cref{koptwehrtherthrteeg}     the bornological coarse space
  $\Sq(X)_{V}$ has finite asymptotic dimension.
\end{lem}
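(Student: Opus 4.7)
The plan is to exploit the finite Assouad-Nagata dimension of the uniform scale $(V_n)_{n\in\nat}$ in order to produce, for every $l\in\nat$, a coarse covering $\widetilde{\mathcal W}^{(l)}$ of $\Sq(X)_V$ with Lebesgue entourage $V^l$, multiplicity bounded by the constant $d$ supplied by \cref{asna}, and with every member $V^{kl}$-bounded. Because the coarse structure of $\Sq(X)_V$ is generated by the single entourage $V$, every entourage of $\Sq(X)_V$ is contained in some power $V^l$, so such a family of coverings exhibits $\Sq(X)_V$ as having asymptotic dimension at most $d-1$.

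The crucial structural observation is that $V\subseteq \diag(\Z)\times (X\times X)$; this is immediate from the form $(\diag(\Z)\times U)\cap W$ in \eqref{rgesgerg}. Consequently the components $\Sq_n(X)=\{n\}\times X$ are pairwise coarsely disjoint in $\Sq(X)_V$, and for every $l\in\nat$ the restriction $(V^l)_n$ of $V^l$ to $\Sq_n(X)$ equals $V_n^l$ since no pair in $V$ crosses components. After enlarging $V$ as permitted by \cref{jophrthetgetgetg} we may further assume $V_n=X\times X$ for $n<0$, so each such $\Sq_n(X)$ is $V$-bounded and may serve as a single member of $\widetilde{\mathcal W}^{(l)}$.

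For each $n\geq 0$ the finite Assouad-Nagata dimension hypothesis, with universal constants $d,k$, yields a covering of $X$ of multiplicity at most $d$, with Lebesgue entourage $V_n^l$ and with pieces $V_n^{kl}$-bounded. Transplanting this covering to $\{n\}\times X$ for every $n\geq 0$ and adjoining the single pieces $\{n\}\times X$ for $n<0$ produces $\widetilde{\mathcal W}^{(l)}$. Coarse disjointness of the components ensures that no member of $\widetilde{\mathcal W}^{(l)}$ meets more than one component, so the multiplicity bound $d$ does not accumulate across $n$. The Lebesgue condition and uniform $V^{kl}$-boundedness reduce component by component to the corresponding statements for the Assouad-Nagata coverings of $X$ (and to the triviality $V^l[(n,x)]\subseteq \{n\}\times X$ for $n<0$), since $V^l$ and $V^{kl}$ respect the decomposition into components.

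I do not anticipate a genuine obstacle: the entire mathematical input is the finite Assouad-Nagata dimension hypothesis, and the passage from $X$ to $\Sq(X)_V$ is organisational, being controlled by coarse disjointness of components. The one point that requires a moment's care is to fix the entourage $V$ sufficiently large once and for all so that, simultaneously, its restriction $V_n$ agrees with the $n$-th member of the uniform scale for $n\geq 0$ and equals $X\times X$ for $n<0$; this is precisely the kind of adjustment that \cref{jophrthetgetgetg} makes available.
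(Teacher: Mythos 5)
Your proposal is correct and follows essentially the same route as the paper: apply the Assouad--Nagata hypothesis componentwise with the uniform constants $d,k$, take the one-member covering of $\{n\}\times X$ for $n<0$ where $V_{n}=X\times X$, and assemble these into a $V^{kl}$-bounded covering of $\Sq(X)_{V}$ with multiplicity at most $d$ and Lebesgue entourage $V^{l}$. The extra remarks you make about $V$ respecting the component decomposition are exactly the (implicit) justification in the paper's shorter argument.
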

\begin{proof} Let $d$ and $k$ be as in \cref{asna}.
We  fix $l$ in $\nat$. For every $n$ in $\Z$ we choose a $V_{n}^{kl}$-bounded covering of $X$ with multiplicity bounded by $d$  and with Lebesgue entourage $V_{n}^{l}$. For $n<0$, we take the one-member covering $(X)$.

Putting these coverings together, we 
get a $V^{kl}$-bounded covering of $\Sq(X)$ with multiplicity bounded by $d$ and Lebesgue entourage $V^{l}$.
\end{proof}

Recall our standing hypothesis that $G$ is finitely generated.
 The following is a version of the if-direction of \cite[Lem. 2.9]{dsa}; see also \cite[Prop. 7.2]{Sawicki_2018}. 

\begin{lem}\label{okgowpgrefwref}  Under the same assumptions as in \cref{koptwehrtherthrteeg} 
and if, in addition $G_{\can}$ has finite asymptotic dimension, then 
 $((G\ltimes \Sq(X)_{V})//G)_{U}$ has finite asymptotic dimension for a cofinal set of entourages $U$ of $(G\ltimes \Sq(X)_{V})//G$.
\end{lem}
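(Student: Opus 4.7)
The plan is to identify $(G\ltimes \Sq(X)_V)//G$ with the tensor product $G_{\can,\min} \otimes \Sq(X)_V$ and then to deduce the claim from the finite asymptotic dimension of the two factors by a standard product-cover argument.

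First I would verify that the bijection
\[
\phi \colon G\times \Z\times X\to G\times \Z\times X,\qquad (g,n,x)\mapsto (g,n,g^{-1}x),
\]
induces an isomorphism $(G\ltimes \Sq(X)_V)//G \cong G_{\can,\min}\otimes \Sq(X)_V$ in $\BC$. Indeed, the generating entourages $W_G = f^{-1}(W)\cap \pr_G^{-1}(\diag G)$ of $G\ltimes \Sq(X)_V$ are transported by $\phi\times \phi$ to entourages of the form $\diag(G)\times W$, while the additional entourages from the $(-)//G$-construction coming from the functorial diagonal $G$-action become entourages of the form $U_\gamma\times \diag(\Sq(X))$ with $U_\gamma=\{(\gamma g,g):g\in G\}$. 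Together these families generate the coarse structure of the tensor product, and the bornologies match by an analogous computation.

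The second step is to produce product covers. By \cref{pltherthtergegrtg} and its proof, the Assouad-Nagata dimension condition yields, for every $l\in\nat$, a cover of $\Sq(X)_V$ of multiplicity at most $d+1$ with Lebesgue entourage $V^l$ and $V^{kl}$-bounded members. By hypothesis $G_{\can}$ has finite asymptotic dimension, say $\leq d_G$. Fixing a finite symmetric generating set $S$ of $G$, I would consider the cofinal family of entourages $U:=U_F\times V^{l}$ of $G_{\can,\min}\otimes \Sq(X)_V$ with $F\supseteq S$ finite and $l\in\nat$; under $\phi$ these correspond to a cofinal family of entourages of $(G\ltimes \Sq(X)_V)//G$. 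For each power $U^n=U_F^n\times V^{ln}$ of the generating entourage of the restricted space, finite asymptotic dimension of $G_{\can}$ provides a cover of $G$ of multiplicity $\leq d_G+1$ at scale $U_F^n$ whose members are $U_{F''}$-bounded for some finite $F''$; since $F$ contains a generating set, $F''\subseteq F^m$ for some $m$, so the members are in fact $U_F^m$-bounded. Forming the product of this cover with the cover of $\Sq(X)_V$ at scale $V^{ln}$ produces a cover of $G\times \Z\times X$ with Lebesgue entourage $U^n$, multiplicity at most $(d_G+1)(d+1)$, and members bounded in $U_F^m\times V^{lnk}\subseteq U^{\max(m,nk)}$.

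The main technical point, modest though it is, will be controlling the bounding entourage of the $G$-cover \emph{within the restricted coarse structure generated by $U_F$ alone}: finite asymptotic dimension of $G_{\can}$ a priori yields only a bound by some arbitrary $U_{F''}$, and to convert it into a power of $U_F$ one must ensure that $F$ already contains a generating set of $G$. This is what forces the restriction to the cofinal subfamily $\{U_F\times V^l\mid F\supseteq S\}$ and makes essential use of finite generation of $G$. With this choice the product construction above shows that $(G_{\can,\min}\otimes \Sq(X)_V)_U$ has asymptotic dimension bounded by $(d_G+1)(d+1)-1$, and the isomorphism of the first step transports this bound to $((G\ltimes \Sq(X)_V)//G)_U$.
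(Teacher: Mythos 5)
Your first step is where the argument breaks down. In $(G\ltimes \Sq(X)_V)//G$ the quotient $-//G$ is taken with respect to the \emph{additional} $G$-action, i.e.\ right multiplication on the $G$-factor leaving the $\Sq(X)$-coordinate fixed; the diagonal action survives as the deck action of the branched covering of \cref{uoiwgrgwegwerfwfwerf}. Hence the coarse structure is generated by the entourages $V_G^{l}$ together with $U_B^{k}\times \diag(\Z\times X)$, i.e.\ by the entourages $U(k,l)$. Transporting these by your map $\phi(g,n,x)=(g,n,g^{-1}x)$ one indeed gets $\phi(V_G^{l})=\diag(G)\times V^{l}$, but
\[
\phi\bigl(U_B\times \diag(\Z\times X)\bigr)=\{((gb,\,b^{-1}z),(g,z))\mid g\in G,\ b\in B,\ z\in \Sq(X)\}\ ,
\]
a \emph{skew} entourage which simultaneously moves the $\Sq(X)$-coordinate by the group action, and not $U_\gamma\times\diag(\Sq(X))$ as you claim. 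Since the action is only Lipschitz with constants depending on the group element, and since a translate $\{(b^{-1}z,z)\}$ is contained in no power of $V$ (this is precisely the warping phenomenon), the transported structure and the tensor-product structure are incomparable: $((gb,b^{-1}z),(g,z))$ has distance one in the former but unbounded distance in $G_{\can,\min}\otimes\Sq(X)_V$ as the level $n$ grows, and conversely for $((gb,z),(g,z))$. The identification $(G\ltimes\Sq(X)_V)//G\cong G_{\can,\min}\otimes\Sq(X)_V$ would hold if the action preserved the entourages $V^{l}$ (e.g.\ an isometric action, and then via the identity rather than via $\phi$), but the hypotheses only provide Lipschitz scales, so this step is false in the generality needed.

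This invalidates the second step as written: your family $U_F\times V^{l}$ is in general not even a family of entourages of $(G\ltimes\Sq(X)_V)//G$ (its $\phi$-preimage is contained in no composition of the $U(k,l)$), and your Lebesgue and boundedness verifications are carried out in the product structure rather than in the actual one. The cover itself is the right object -- in the coordinates given by $f$ the paper's cover $\pr_G^{-1}\cY\cap f^{-1}\cZ$ is exactly a product cover, so the multiplicity bound $(d_G+1)(d_X+1)$ is common to both arguments -- but the substance of the proof is the control of the skew structure: one works with the genuine cofinal entourages $U(k,l)=(U_B^{k}\times\diag(\Z\times X))\circ V_G^{l}$ and uses the $G$-adaptedness of $V$ (\cref{herjtogitrjgoetrgrtegetrg}) to commute group moves past $V$-moves, which is how the inclusion $U(k,l)^{r}\subseteq U(k',l')$ is obtained and how the covering conditions are checked at scale $U(k,l)$. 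Your proposal never invokes the Lipschitz condition at all, which is a reliable sign that the actual difficulty has been bypassed; without it there is no way to pass from covers of $G_{\can}$ and $\Sq(X)_V$ to a cover that is uniformly bounded and has a Lebesgue entourage for the structure generated by $U(k,l)$.
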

\begin{proof}   
Assume that the  asymptotic dimension of $G_{\can}$ is bounded by $d_{G}$. By  \cref{pltherthtergegrtg}, we can bound the asymptotic dimension of $\Sq(X)_{V}$ by some number $d_{  X}$.

We fix a finite generating set $B$ of $G$ and define $U_{B}$ as in  \eqref{ertkhertgrtgge}.
The entourages of the form 
 $U(k,l):=(U_{B}^{k}\times \diag(\Z \times X))\circ  V^{l}_{G}$ for $k,l$ in $\nat$  are cofinal in the coarse structure of $(G\ltimes \Sq(X)_{V})//G$.  
 
 We fix $k,l$. Using   the  Lipschitz  condition, we can see that for $r$ in $\nat$ we have $U(k,l)^{r}\subseteq U(k',l')$ for suitable  $k',l'$ in $\nat$.
  By our assumptions, we can find  a coarsely bounded  covering $\cY$ of $G_{\can} $  with Lebesgue entourage $U_{B}^{k'}$ and
with multiplicity bounded by $d_{G}+1$.  Furthermore,
we can find a coarsely bounded covering $\cZ$ of $\Sq(X)_{V}$ with multiplicity bounded by $d_{X}+1$
and Lebesgue entourage $V^{l'}$. Then 
$$\pr_{G}^{-1}\cY\cap f^{-1}\cZ:=\{\pr_{G}^{-1}Y\cap  f^{-1}Z\mid Y\in \cY \:\& \:   Z\in \cZ   \}$$
is  a  covering of $G\times \Sq(X)$ with multiplicity bounded by $(d_{G}+1)(d_{ X}+1)$   
that  is coarsely bounded in the structure $((G\ltimes \Sq(X)_{V})//G)_{U(k,l)}$ and has Lebesgue entourage  $U(k,l)^{r}$, in fact even $U(k',l')$.

As we can choose $r$ in $\nat$ arbitrarily 
we can conclude that $((G\ltimes \Sq (X)_{V})//G)_{U_{k,l}}$ has finite asymptotic dimension. \end{proof}

By Assumption \ref{kopherthretge9}.\ref{rkhoeprtgertgretgegtr}, the assumptions of \cref{koptwehrtherthrteeg} and of \cref{okgowpgrefwref}
are satisfied for a cofinal set of $G$-adapted entourages $V$. 
We furthermore note that for the entourages $U$ of $(G\ltimes \Sq(X)_{V})//G$ produced by \cref{okgowpgrefwref}, the space $(\Sq(X)_{V}//G)_{f(U)}$ has bounded geometry
since the entourages  $f(U)$  generate the coarse structure of  $\Sq(X)_{V}//G$ and therefore \cref{koptwehrtherthrteeg} applies. Therefore, 
these lemmas verify the remaining  
Assumption \   \ref{retgtrtbvgfdbgfd}.\ref{gkpwerferfwfref}
 of \cref{retgtrtbvgfdbgfd}.
 By  \cref{retgtrtbvgfdbgfd}, we get $$0\not=[ (\pm 1)_{n}]=\tau( (\pm 1)[p'])=\bar \tau^{G}(f^{*}((\pm 1)[p']))\ .$$ 
 On the other hand,  
we use \cref{hpopethrtgertgtg} saying that $p$ is represented by a ghost projection $\hat P^{s}$. This implies by  \cite[Cor. 8.12]{Bunke:2025aa} that $f^{*}([p'])=0$ and hence also $f^{*}((\pm 1)[p'])=0$. This  is  the desired contradiction.
 
This finishes the proof of \cref{kopherthretge9} modulo the construction of the class $p$ which will be given in 
 \cref{kophjkertophrtgertgrteg9} below.

\begin{rem} 
The corresponding arguments  in the papers \cite{Kitsios:2025aa}, \cite{dsa}, \cite{Li_2023} use a different model of the coarse assembly map  and  refer to \cite[Lem. 6.5]{Willett_2012} 
for the step where the   $L^{2}$-index theorem is applied.
In our argument,  we propose to use  the model of the coarse assembly map
introduced in \cite{ass} and  \cref{retgtrtbvgfdbgfd} instead.     
 \hB
\end{rem}

 \section{Controlled Hilbert spaces}\label{kophjkertophrtgertgrteg9}

We consider $X$ in $\Fun(BG,\UBC)$ and assume that it satisfies the  Conditions \ref{lptgeertgetrg}-\ref{fhqwekfqweq} of    \cref{kopherthretge9}. 

 Following \cite{drno}, \cite{dsa}, \cite{Li_2023},
 we will explain in detail how to associate to  
 a suitably  ergodic $G$-invariant probability measure $\nu$ on $X$ a $\Z$-invariant class $p$ in $\pi_{0}K\cX(\Sq(X) //G)$ whose components are generators.
    This class is the main input for the proof of \cref{kopherthretge9} given in \cref{hkoeprttrgegertg}. 

Recall that the spectrum $K\cX(Y)$ for $Y$ in $\BC$ is defined in  \cite{buen}  as the topological $K$-theory spectrum
of the $C^{*}$-category $\bV(Y)$  of locally finite $Y$-controlled Hilbert spaces that
 are determined on points, and whose morphisms are bounded  operators  that can be approximated by controlled operators. It therefore requires some work to put the  class represented by the Drutu-Nowak projection $\hat P$ (described in \cref{fewfewrdefwerfwef})  into this framework.

The uniform structure on $X$ induces a topology and therefore a Borel measurable structure. Since $G$ acts by uniform maps, it also 
  acts by measurable maps.
 The additional datum going into the construction of $p$ is a $G$-invariant   Borel probability measure $\nu$.
   The group $G$ acts on the Hilbert space $H:=L^{2}(X,\nu)$ by isometries $g\mapsto \rho(g)$.
 We let $H\cong H^{G}\oplus (H^{G})^{\perp}$  be the decomposition of $H$ into the subspace of $G$-invariant vectors  and   its orthogonal complement.
 The following explains a part of the assumption   \cref{kopherthretge9}.\ref{kpbgbrgebgrbgrb}.
\begin{ddd}[{\cite[Def. 2.4]{drno}}]\label{gkwopergwerferfw}
We say that the action of $G$ on $H$ has a spectral gap if 
there exists a finite subset $S$ of $G$ and $\kappa$ in $(0,1)$ such that  for every $h$ in $(H^{G})^{\perp}$ we have 
$$\sup_{s\in S} \|h-\rho(s)h\|\ge  \kappa \|h\|\ .$$
\end{ddd}
If $G$ is finitely generated, then it suffices to check this condition on a finite generating set $S$ of $G$.
In this case we form the selfadjoint bounded operator 
$$M_{S}:=\frac{1}{|S|}\sum_{g\in G}  \rho(g)$$ on $H$. 
Note that $H^{G}$ is contained in the $1$-eigenspace of $M_{S}$.  
If the action of $G$ on $(X,\nu)$ is ergodic, then $H^{G}$ consists precisely of the constant functions.

\begin{prop}[{\cite{drno}}] \label{hehrtgertg}If $G$ acts ergodically on $(X,\nu)$, then the following are equivalent:
\begin{enumerate}
\item 
$1$ is an isolated
eigenvalue of $M_{S}$ with a one-dimensional eigenspace. 
\item The action of $G$ on $H$ has a spectral gap.
 \end{enumerate}
\end{prop}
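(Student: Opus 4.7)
The plan is to exploit a single elementary identity which converts the definition of spectral gap (a sup over $S$) into an inner-product estimate involving $M_S$ (an average over $S$), and then invoke the spectral theorem for the self-adjoint operator $M_S$ on the invariant subspace $(H^G)^\perp$. Throughout I will tacitly replace $S$ by $S\cup S^{-1}$, which does not affect either condition, so that $M_S$ is self-adjoint and $\|M_S\|\le 1$. Since $G$ acts ergodically on $(X,\nu)$, the subspace $H^G$ of constant functions is one-dimensional and the question reduces to whether the spectrum of $M_S|_{(H^G)^\perp}$ is contained in an interval of the form $[-1,1-\epsilon]$.

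The key identity to set up first is
\[
\sum_{s\in S}\|h-\rho(s)h\|^{2}=2|S|\bigl(\|h\|^{2}-\langle h,M_{S}h\rangle\bigr),
\]
valid for all $h\in H$ because $\rho$ is unitary. Since $M_S$ is self-adjoint, the inner product on the right is real, so the identity gives a clean dictionary between the quantities appearing in the two conditions.

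For the direction (2)$\Rightarrow$(1), the spectral gap yields $\sup_{s\in S}\|h-\rho(s)h\|^{2}\ge \kappa^{2}\|h\|^{2}$ for $h\in(H^G)^\perp$, hence in particular the sum on the left of the identity is at least $\kappa^{2}\|h\|^{2}$. Rearranging gives $\langle h,M_S h\rangle\le (1-\tfrac{\kappa^{2}}{2|S|})\|h\|^{2}$. Since $M_S$ is self-adjoint on $(H^G)^\perp$, this is precisely the assertion that its spectrum there lies in $[-1,1-\tfrac{\kappa^{2}}{2|S|}]$, so $1$ is an isolated point of the spectrum of $M_S$, and by ergodicity its eigenspace is one-dimensional. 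Conversely, for (1)$\Rightarrow$(2), if $1$ is isolated in $\sigma(M_S)$ then by the spectral theorem there exists $\epsilon>0$ with $\langle h,M_S h\rangle\le(1-\epsilon)\|h\|^{2}$ for $h\in(H^G)^\perp$; feeding this back into the identity produces $\sum_{s\in S}\|h-\rho(s)h\|^{2}\ge 2|S|\epsilon\|h\|^{2}$, and passing from the average to the maximum gives $\sup_{s\in S}\|h-\rho(s)h\|\ge\sqrt{2\epsilon}\,\|h\|$, i.e.\ a spectral gap with $\kappa=\sqrt{2\epsilon}$.

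There is not much of an obstacle here; the only mildly delicate point is that the definition of spectral gap quantifies over \emph{some} finite $S$, whereas on the $M_S$ side one wants to use the fixed generating set. This is handled by the remark preceding the proposition, which reduces the spectral-gap condition for finitely generated $G$ to a check on any finite generating set, so both directions can be carried out with the same $S$ and the argument is self-contained.
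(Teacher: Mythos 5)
Your proof is correct. Note that the paper offers no argument of its own for \cref{hehrtgertg}: the statement is quoted from \cite{drno}, and your averaging identity $\sum_{s\in S}\|h-\rho(s)h\|^{2}=2|S|\bigl(\|h\|^{2}-\langle h,M_{S}h\rangle\bigr)$ combined with the spectral theorem for $M_{S}$ on the invariant subspace $(H^{G})^{\perp}$ is exactly the standard argument behind the cited result, so there is nothing substantive to compare. Two points are worth making explicit in your write-up: first, the identity (and the self-adjointness of $M_{S}$ that the paper asserts) presupposes $S=S^{-1}$, which is harmless because $\sup_{s\in S}\|h-\rho(s)h\|$ is unchanged under $S\mapsto S\cup S^{-1}$ by unitarity of $\rho$; second, in the direction (1)$\Rightarrow$(2) you should state that the one-dimensionality hypothesis together with ergodicity (which makes $H^{G}$ the line of constant functions, contained in the $1$-eigenspace) forces the $1$-eigenspace to equal $H^{G}$ — this identification is what licenses the bound $\langle h,M_{S}h\rangle\le(1-\epsilon)\|h\|^{2}$ on $(H^{G})^{\perp}$ coming from the isolation of $1$ in $\sigma(M_{S})$.
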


 From now on, we assume that $X$  satisfies all conditions of \cref{kopherthretge9}.
 Using the counting measure on $\Z$ we equip  $ \Z\times X$ with the product measure $\hat \nu:=\delta\times \nu$  and define the Hilbert space $\hat H:=L^{2}(\Z\times X, \hat \nu)$.  
This Hilbert space again carries a unitary action $g\mapsto \hat \rho(g)$ of $G$ by  translations
on the $X$-factor and a unitary action $n\mapsto \hat t(n)$ of $\Z$ by translations on the  $\Z$-factor. 
 
 \begin{ddd}\label{fewfewrdefwerfwef} The   orthogonal projection $\hat P$ onto the subspace of $\hat H$ consisting of functions that are constant on the components $\{n\}\times X$ for all $n$ in $\Z$  is called the Drutu-Nowak projection. \end{ddd}  The Drutu-Nowak projection is obviously $\Z$- and $G$-invariant. By \cref{hehrtgertg}, the Assumption \ref{kopherthretge9}.\ref{kpbgbrgebgrbgrb} implies that $\hat P$   is the spectral projection of the selfadjoint bounded operator
 $$\hat M_{S}:=\frac{1}{|S|}\sum_{g\in G} \hat \rho(g)$$ on $\hat H$ to the isolated  eigenvalue $\{1\}$.

For every measurable subset $Y$ of $\Z\times X$ we let $\hat \nu(Y)$ in $B(\hat H)$ denote the multiplication operator by the characteristic function of $Y$. 
\begin{rem}\label{gkopretgertfert} 
 Let $X_{\disc}$ denote $X$ with the discrete uniform and coarse structures.
The operators $\hat \rho(g)$ and $\hat M_{S}$ are controlled by coarse entourages of  $\Sq(X_{\disc})//G$. Indeed, for every finite subset $F$ of $G$ consider the entourage  $$U_{F}:=\{((n,gx),(n,x))\mid g\in F\ , n\in \Z\ ,x\in X\}$$ of $\Sq(X_{\disc})//G$. Then $\hat \rho(g)$ is $U_{\{g\}}$-controlled, and $\hat M_{S}$ is $U_{S}$-controlled. Since $1$ is an isolated eigenvalue  of $\hat M_{S}$, the operator
$\hat P$ can be approximated by 
controlled operators.  It is  locally finite-dimensional and hence locally compact.  We therefore have $$\hat P\in C_{fp\cap lc}(\Sq(X_{\disc})//G,\hat H, \hat \nu)\ ,$$ where $C_{fp\cap lc}(\dots)$ denotes the $C^{*}$-algebra
of operators that are locally compact and can be approximated by
controlled operators.

The Drutu-Nowak projection $\hat P$ does not directly induce a coarse $K$-homology class in $\pi_{0}K\cX(\Sq(X_{\disc})//G)$ since the Hilbert space $(\hat H,\hat \nu)$ does not admit controlled isometries to ample $ \Sq(X_{\disc})//G$-controlled Hilbert spaces in general. 
 \hB
\end{rem}

The identity map of underlying sets is a morphism $\Sq(X_{\disc})//G\to \Sq(X)//G$ of bornological coarse spaces. In the following 
 we  turn $\hat H$ into a $\Sq(X)//G$-controlled Hilbert space determined on points with locally finite support; see  \cite{buen} for these notions. By Assumption \ref{kopherthretge9}.\ref{hpkprhtrgrtegerg}, we can choose a coarse entourage $V$ of $\Sq(X)$ such that all the components $V_{n}$ are uniform entourages of $X$.  Using the further assumptions on $X$,   we can choose  (see \cref{gkowperweferf} below for details) a measurable  $V^{2}$-bounded partition $(B_{j})_{j\in I}$ of a subset of full measure  of  $\Z\times X$   with $\hat \nu(B_{i})>0$ for all $i$ in $I$ and a locally 
 finite $V^{5}$-dense family $(b_{i})_{i\in I}$ of base points with $b_{i}\in B_{i}$. 
 We define the finitely additive projection-valued measure on $\hat H$ by 
$$\hat\mu(Y):=\sum_{i\in I } \hat \nu(B_{i})\delta_{b_{i}}\ .$$

 \begin{construction}\label{gkowperweferf}{\em 
 We start from a maximal $V$-separated family $(x_{j})_{j\in J}$ on $\Z\times X$. By  Assumption \ref{kopherthretge9}.\ref{hpkprhtrgrtegerg1},
each component $\{n\}\times X$ contains finitely many of them.
 Then $(V^{2}[x_{j}])_{j\in J }$ is a countable covering of $\Z\times X$.
 We can now choose inductively subsets $B_{j}\subseteq V^{2}[x_{j}]$ for every $j$ in $J$
 such that $(B_{j})_{j\in J}$ is a (necessarily $V^{2}$-bounded) measurable partition of $\Z\times X$.
 We set $I:=\{j\in J\mid \nu(B_{j})>0\}$. For every $i$ in $I$, we choose a point $b_{i}$ in the necessarily non-empty set $B_{i}$. Then $(b_{i})_{i\in I}$ is $V^{3}$-dense.
 Indeed, otherwise there exists a point $x$ in $\Z\times X$ such that
 $V[x]\cap B_{i}=\emptyset$ for all $i$ in $I$.
This implies $\nu(V[x])=0$ which contradicts the assumption that $\supp(\nu)=X$. 
 \hB

 }
 \end{construction}

    We  have the Roe algebra  $C(\Sq(X)//G,\hat H,\hat \mu)$ defined as the subalgebra of $B(\hat H)$
 generated by locally finite and controlled operators \cite{buen}.
 The following  observation  was made e.g. in  
  \cite{dsa}, \cite{Li_2023}, \cite{Kitsios:2025aa}.
 Recall our standing Assumption  \ref{kopherthretge9}.\ref{kpbgbrgebgrbgrb}.
 \begin{lem}\label{koprherhertgertgtr} We have\begin{equation}\label{fgdbfgbdfgberth}\hat P\in  C (\Sq(X)//G,\hat H,\hat\mu)\ .
\end{equation}  \end{lem}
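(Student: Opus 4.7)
The strategy is to transfer the two properties of $\hat P$ already established in \cref{gkopretgertfert}---local compactness and approximability by controlled operators---from the auxiliary setting $(\hat H,\hat \nu)$ on $\Sq(X_{\disc})//G$ to the genuine $\Sq(X)//G$-controlled Hilbert space $(\hat H,\hat \mu)$. That $(\hat H,\hat \mu)$ is determined on points and has locally finite support is immediate from \cref{gkowperweferf}: the base points $(b_i)_{i\in I}$ are locally finite in $\Sq(X)$ and the subspaces $\hat\nu(B_i)\hat H$ are pairwise orthogonal.

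For local compactness, note that a coarse entourage $V$ of $\Sq(X)$ of the form $(\diag(\Z)\times U)\cap W$ (see \eqref{rgesgerg}) satisfies $V^{2}\subseteq \diag(\Z)\times U^{2}$, so each partition block $B_i\subseteq V^{2}[b_i]$ is contained in a single fibre $\{n_i\}\times X$. The projection $\hat P$ averages over fibres, hence $\hat P\,\hat\nu(B_i)$ has one-dimensional range, spanned by a suitably rescaled indicator of $\{n_i\}\times X$. For any bounded subset $B$ of $\Sq(X)//G$, the sum $\hat\mu(B)=\sum_{i:\,b_i\in B}\hat\nu(B_i)$ is finite by local finiteness of $(b_i)_{i\in I}$, so both $\hat P\,\hat\mu(B)$ and $\hat\mu(B)\,\hat P$ have finite rank.

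For controllability, a direct computation gives that $\hat\nu(B_i)\,\hat\rho(g)\,\hat\nu(B_j)\neq 0$ forces $B_i\cap gB_j\neq\emptyset$, which in turn places $(b_i,b_j)$ in $V^{2}\circ U_{\{g\}}\circ V^{2}$, where $U_{\{g\}}:=\{((n,gx),(n,x))\mid n\in\Z,\,x\in X\}$ is the generating entourage of $\Sq(X)//G$ coming from the $G$-action. Hence $\hat\rho(g)$ is $\hat\mu$-controlled by the $\Sq(X)//G$-entourage $V^{2}\circ U_{\{g\}}\circ V^{2}$; the analogous statement holds for $\hat M_{S}$ with entourage $V^{2}\circ U_{S}\circ V^{2}$, and every polynomial in $\hat M_{S}$ is controlled by an iterated composition thereof. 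Assumption \ref{kopherthretge9}.\ref{kpbgbrgebgrbgrb} together with \cref{hehrtgertg} guarantees that $1$ is an isolated eigenvalue of $\hat M_{S}$, so continuous functional calculus expresses $\hat P$ as $f(\hat M_{S})$ for a continuous function $f$ equal to $1$ near $1$ and to $0$ on the remainder of the spectrum. A Weierstrass approximation of $f$ by polynomials on the spectrum of $\hat M_{S}$ then yields a norm approximation of $\hat P$ by controlled operators, completing the verification that $\hat P\in C(\Sq(X)//G,\hat H,\hat\mu)$.

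The main subtlety will be the control estimate for $\hat\rho(g)$: the passage from the $\hat\nu$-control implicit in \cref{gkopretgertfert} to genuine $\hat\mu$-control forces the symmetric enlargement by $V^{2}$ on both sides, and one has to verify that this enlargement still lies inside the coarse structure of $\Sq(X)//G$. This is ensured precisely because $V$ is a coarse entourage of $\Sq(X)$ and $U_{\{g\}}$ is by construction a generator of the coarse structure of $\Sq(X)//G$; once this is in place, the spectral gap carries the purely analytic part of the argument.
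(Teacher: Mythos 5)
Your preparatory steps reproduce the paper's argument: the control of $\hat\rho(g)$ and $\hat M_{S}$ with respect to $\hat\mu$ by entourages of the form $V^{2}\circ U_{\{g\}}\circ V^{2}$, the use of the spectral gap (via \cref{hehrtgertg}) to approximate $\hat P=f(\hat M_{S})$ in norm by polynomials in $\hat M_{S}$, and the local compactness of $\hat P$ (your rank-one computation for $\hat P\hat\nu(B_{i})$ is a nice way to make this precise). However, the final step has a genuine gap. What these facts give you is that $\hat P$ is locally compact and can be approximated in norm by controlled operators, i.e. that $\hat P$ lies in the a priori larger algebra $C_{fp\cap lc}(\Sq(X)//G,\hat H,\hat\mu)$. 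The Roe algebra $C(\Sq(X)//G,\hat H,\hat\mu)$ in the statement is the closure of the algebra generated by operators which are simultaneously locally finite and controlled. Your approximants $q(\hat M_{S})$ are controlled but are very far from locally finite: they contain multiples of the identity and of the unitaries $\hat\rho(g)$, and since $\nu$ is non-atomic every $\hat\nu(B_{i})$ has infinite rank, so none of these operators is even locally compact. Hence $q(\hat M_{S})\notin C(\Sq(X)//G,\hat H,\hat\mu)$, and a norm limit of such operators does not by itself exhibit $\hat P$ as an element of the Roe algebra; it only exhibits membership in $C_{fp\cap lc}$.

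To close this gap you must either produce approximants which are both locally finite and controlled, or invoke the comparison $C(\Sq(X)//G,\hat H,\hat\mu)=C_{fp\cap lc}(\Sq(X)//G,\hat H,\hat\mu)$, which is exactly what the paper does by appealing to (the proof of) \cite[Thm. 6.20]{mvi}. This identification is where the remaining analytic work sits: given an operator which is locally compact and norm-close to a controlled operator, one has to replace the controlled approximant by one which is in addition locally compact, which requires a truncation/patching argument using the structure of the controlled Hilbert space and cannot simply be skipped. With that theorem (or a direct argument of this type) inserted after your spectral-gap approximation, your proof becomes essentially the paper's proof.
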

\begin{proof} We first consider  the a priori larger Roe type algebra $C_{fp\cap lc}(\Sq(X)//G,\hat H,\hat\mu)$ of locally compact operators that can be approximated by  controlled operators.   The proof of  
  \cite[Thm. 6.20]{mvi} applies to the current situation and ensures the equality 
\begin{equation}\label{werfwerfwerf}C(\hat X//G,\hat H,\hat\mu)=C_{fp\cap lc}(\hat X//G,\hat H,\hat\mu)\ .
\end{equation}
Note that the operators $\hat \rho(g)$ considered on $(\hat H,\hat \mu)$ are no longer $U_{\{g\}}$-controlled (see \cref{gkopretgertfert}), but they are still
$V^{2}\circ U_{\{g\}}\circ V^{2}$-controlled. Consequently, these and $\hat M_{S}$ are controlled on $(\hat H,\hat \mu)$  for the coarse structure of $\Sq(X)//G$. As $\hat P$ is clearly locally compact,  we have  $\hat P\in 
 C_{fp\cap lc}(\Sq(X)//G,\hat H,\hat\mu)$, and therefore by 
 \eqref{werfwerfwerf} we get \eqref{fgdbfgbdfgberth}.
 \end{proof}

\begin{rem}\label{erwg9upwerg}
We use the Assumption \ref{kopherthretge9}.\ref{kpbgbrgebgrbgrb} of a spectral gap for convenience
since then \cref{koprherhertgertgtr} has the classroom-presentable proof given above.
But  the assertion of \cref{koprherhertgertgtr} holds under the more general assumption that the action of
$G$ on $(X,\nu)$ is strongly ergodic  \cite[Def. 2.10]{Li_2021}\footnote{We thank F. Vigolo for pointing this out.}
. In this case it follows from \cite[Cor. 4.4]{Li_2023}
and the equivalence of strong ergodicity and asymptotic expansion  \cite[Lem. 3.16]{Li_2021}. \hB
\end{rem}

  Note that $(\hat H,\hat \mu)$ is determined on points, but is not ample in general. 
  Using  the assumption
that $\hat \nu(B_{i})\not=0$ for all $i$ in $I$  and that the  family $(b_{i})_{i\in I}$ is $V^{5}$-dense, we can conclude that
the  stabilization $$(\hat H^{s},\hat \mu^{s}):=(\hat H\otimes \ell^{2},\hat \mu\otimes \id_{\ell^{2}})$$ is an 
  ample $\widehat X$-controlled Hilbert space in the sense of \cite{buen}. We set
  $\hat P^{s}:=\hat P\otimes e_{0}$ in $C(\Sq(X)//G,\hat H^{s},\hat \mu^{s})$, where $e_{0}$ is the projection onto
  the zeroth basis vector of $\ell^{2}$.

 Recall  that $t: \Sq(X)//G\to \Sq(X)//G$ denotes the shift map.
 We cover $t$ by the  isometry $\hat t:(\hat H^{s},t_{*}\hat \mu^{s})\to (\hat H^{s},\hat \mu^{s})$, where $\hat t:=\hat t(1)$.
 By the $\Z$-invariance of $\hat P$, we have the equality \begin{equation}\label{weferfwerf}\hat t \hat P^{s}\hat t^{*}= \hat P^{s}\ .
\end{equation}

We can now finally construct a coarse  $K$-homology class  $p$  using  \cite[Thm. 6.1]{Bunke:2017aa} (in the case of a trivial group). Since $(\widehat H^{s},\hat \mu^{s})$ is ample on $\Sq(X)//G$, we have a canonical equivalence
$$\kappa_{(\Sq(X)//G,\hat H^{s},\hat \mu^{s})}:K(C(\Sq(X)//G,\hat H^{s},\hat \mu^{s}))\stackrel{\simeq}{\to} K\cX
(\Sq(X)//G)\ .$$  
We let $[\hat P^{s}]$ in $\pi_{0}K(C(\Sq( X)//G,\hat H^{s},\hat \mu^{s}))$ be the class represented by
$\hat P^{s}
$ and define the coarse $K$-homology class
  \begin{equation}\label{whtgwergergws}
 p:=\kappa_{(\Sq(X)//G,\hat H^{s},\hat \mu^{s})}( [\hat P^{s}])
\end{equation} in $\pi_{0}K\cX(\Sq( X)//G)$.
  This finishes the construction of the class $p$. In the follwoing we discuss its essential properties.
    
  Combining the second assertion of  \cite[Thm. 6.1]{Bunke:2017aa} (stating the naturality of the comparison morphisms $\kappa_{\dots}$ in \eqref{whtgwergergws}) and \eqref{weferfwerf} the filler of the square in 
 \cite[(6.1)]{Bunke:2017aa}  determines a witness of 
 the equality  \begin{equation}\label{gwerfwefwerf}t_{*} p =p\ .
\end{equation} 
Since the restriction of $\hat P^{s}$ to every component $\{n\}\times X$ is a one-dimensional projection, we  can determine the trace  and get
$$\tau(p)=(1)_{n}$$ in $\prod_{\Z}\Z$.

 In the following, we use the version of the definition \cite[Def. 8.10]{Bunke:2025aa} of a ghost operator, a notion originally introduced by G.Yu.
We consider $\hat P^{s}$ on $\Sq(X)//G$ with the big family $\Sq_{-}(X)//G$.
\begin{lem} \label{hpopethrtgertgtg} $\hat P^{s}$ is a ghost operator. \end{lem}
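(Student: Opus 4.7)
The plan is to unwind \cite[Def. 8.10]{Bunke:2025aa} into a decay statement for the matrix coefficients of $\hat P^s$ and then verify this decay using measure-theoretic properties of $\nu$. First I would compute $\hat\mu^s(\{b_i\}) \hat P^s \hat\mu^s(\{b_j\}) = (\hat\nu(B_i) \hat P \hat\nu(B_j)) \otimes e_0$, whose operator norm equals that of $\hat\nu(B_i) \hat P \hat\nu(B_j)$. Since $\hat P$ preserves each factor $L^2(\{n\}\times X, \hat\nu)$ and restricts there to the rank-one projection onto the constants, this matrix coefficient vanishes when $b_i, b_j$ lie in different components $\{n_i\}\times X$, $\{n_j\}\times X$, and is a rank-one operator of norm $\sqrt{\hat\nu(B_i)\hat\nu(B_j)}$ when both lie in the same component $\{n\}\times X$. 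By the partition construction in \cref{gkowperweferf}, writing $A_i \subseteq X$ for the $X$-projection of $B_i$, one has $\hat\nu(B_i) = \nu(A_i) \leq \nu(V_n^2[a_i])$ for some $a_i \in X$, where $V_n$ is the $n$th component of the fixed entourage $V$.

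The key analytic input is then the uniform decay
\[
\sup_{x \in X} \nu(V_n^2[x]) \;\longrightarrow\; 0 \qquad (n \to \infty).
\]
I would obtain this by combining Assumptions \ref{kopherthretge9}.\ref{lptgeertgetrg}-\ref{hpkprhtrgrtegerg} (which ensure that $X$ is totally bounded with a countably generated uniform structure, so behaves like a compact pseudo-metric space) with Assumption \ref{kopherthretge9}.\ref{kpbgbrgebgrbgrb} (non-atomicity and full support of $\nu$). For each fixed $x$, continuity of measure gives $\nu(V_n[x]) \to \nu(\{x\}) = 0$. Uniformity in $x$ follows by a compactness argument: a sequence of counterexamples $x_n$ with $\nu(V_n^2[x_n]) \ge \epsilon$ admits, by total boundedness, a Cauchy subsequence with limit point $x$, and the inclusions $V_n^2[x_n] \subseteq V_m[x]$ valid for sufficiently large $n$ (once $(x_n, x) \in V_m$ and $V_n \subseteq V_m$) force $\nu(V_m[x]) \ge \epsilon$ for every $m$, contradicting $\nu(\{x\}) = 0$.

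Finally, given $\epsilon > 0$, I would choose $n_0$ with $\sup_x \nu(V_n^2[x]) < \epsilon^2$ for $n > n_0$. Then any matrix coefficient $\hat\mu^s(\{b_i\})\hat P^s\hat\mu^s(\{b_j\})$ with at least one of $b_i, b_j$ in $\Sq_{>n_0}(X)$ has operator norm at most $\epsilon$: it either vanishes (different components) or is bounded by $\sqrt{\nu(A_i)\nu(A_j)} \le \epsilon$. Since $\Sq_{\le n_0}(X)//G$ is a member of the big family $\Sq_{-}(X)//G$ and the $G$-action preserves the component index, this is exactly the ghost condition for $\hat P^s$ on $(\Sq(X)//G, \Sq_{-}(X)//G)$. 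The main technical obstacle is the uniform measure decay in the second paragraph, which simultaneously uses total boundedness, non-atomicity, and (through \cref{gkowperweferf}) full support of $\nu$; the rest of the argument is a direct computation on the controlled Hilbert space.
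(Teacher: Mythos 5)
Your overall strategy coincides with the paper's: exploit that $\hat P$ restricts to the rank-one projection onto constants on each component $\{n\}\times X$ to bound matrix coefficients by $\sqrt{\nu(\cdot)\nu(\cdot)}$, and then derive uniform decay from non-atomicity of $\nu$ and total boundedness. However, there is a genuine gap in what you actually verify. The ghost condition, as it is unwound in the paper's proof of this lemma, is quantified over all coarse entourages $U$ of $\Sq(X)//G$; a cofinal family of these has the form $W_{F}\circ V$ with $W_{F}$ as in \eqref{oerwigjregvweorgwergwr}, so one must make $\|\hat\nu(U_{n}[x])\,\hat P\,\hat\nu(U_{n}[x'])\|$ uniformly small for large $n$, where the balls $U_{n}[x]$ are unions of finitely many $G$-translates of $V_{n}$-balls. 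Smallness of the singleton coefficients $\hat\mu^{s}(\{b_{i}\})\hat P^{s}\hat\mu^{s}(\{b_{j}\})$ does not formally imply this (the number of base points $b_{i}$ in an entourage ball is not uniformly controlled), so your closing claim that the singleton statement ``is exactly the ghost condition'' is not justified as written. The repair is short and uses exactly your rank-one observation: for arbitrary measurable $E,E'$ in one component one still has $\|\hat\nu(E)\hat P\hat\nu(E')\|\le\sqrt{\nu(E)\nu(E')}$, and the $G$-part is absorbed by the $G$-invariance of $\nu$ via $\nu(U_{n}[x])\le |F|\sup_{x\in X}\nu(V_{n}[x])$ --- a step the paper makes explicitly and your write-up omits entirely.

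A second, smaller gap sits in your proof of the key decay $\sup_{x\in X}\nu(V_{n}^{2}[x])\to 0$: Assumption \cref{kopherthretge9}.\ref{hpkprhtrgrtegerg1} gives total boundedness but no completeness, so a Cauchy subsequence of the concentration points need not have a limit point in $X$, and the quoted identity $\nu(\{x\})=0$ has no anchor. This is fixable by pushing $\nu$ forward to the compact completion: the concentration points subconverge there to some $\bar x$, the balls $V_{n}^{2}[x_{n}]$ are eventually contained in balls around $\bar x$, and continuity from above forces an atom of the pushforward measure at $\bar x$, contradicting the fact that the pushforward assigns every singleton measure zero by non-atomicity of $\nu$. (The paper sidesteps this by citing the argument of \cite[Lem. 6.3]{drno}, \cite[Lem. 5.4]{Li_2023} for uniform spaces.) Note also that full support of $\nu$ is not needed for the decay; it only enters through \cref{gkowperweferf}.
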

\begin{proof} Any coarse entourage of $\Sq(X)//G$ is contained in an entourage  
$U:=W_{F}\circ V$ for some $G$-adapted coarse entourage $V$ of $\Sq(X)$ and $W_{F}$ as in \eqref{oerwigjregvweorgwergwr}.
 We let
$U_{n}$ denote its restriction to the $n$-component which we consider as a copy of $X$.
We must show that
\begin{equation}\label{gerfwefewrfwer}\lim_{n_{0}\to \infty} \sup_{n\ge n_{0}}\sup_{x,x'\in X} \|\nu(U_{n}[x])P\nu(U _{n}[x'])\|=0\ .\end{equation}
We have $$ \|\nu(U_{n}[x])P\nu(U_{n}[x'])\|\le \sqrt{\nu(U_{n}[x])\nu(U_{n}[x'])}\ .$$
 By the $G$-invariance of $\nu$ we furthermore have 
$ \nu(U_{n}[x])\le |F|\sup_{x\in X}\nu(V_{n}[x])$.
We now use that $(V_{n})_{n\in \nat}$ is a uniform scale. 
Since $\nu$ is non-atomic, we can conclude (the argument in \cite[Lem. 6.3]{drno}, \cite[Lem. 5.4]{Li_2023} also works in the context of uniform spaces) that
$\lim_{n_{0}\to \infty}  \sup_{n\ge n_{0}}\sup_{x\in X}\nu(V_{n}[x])=0$. This implies \eqref{gerfwefewrfwer}.
   \end{proof}

\bibliographystyle{alpha}
\bibliography{forschung2021}

\end{document}